\documentclass[12pt]{article}
\usepackage[a4paper,margin=2cm]{geometry}
\usepackage[english]{babel}
\usepackage{latexsym,amsmath,enumerate,graphics,enumerate,amsthm,tikz,hyperref,float}
\usepackage[affil-it]{authblk}
\usepackage{enumerate,amsthm,dsfont,pstricks}
\usepackage{latexsym,amsmath,amssymb,amscd,wrapfig,graphicx}

\newtheorem{theorem}{Theorem}[section]
\newtheorem{lemma}[theorem]{Lemma}
\newtheorem{proposition}[theorem]{Proposition}
\newtheorem{corollary}[theorem]{Corollary}

\theoremstyle{definition}
\newtheorem{definition}[theorem]{Definition}

\theoremstyle{remark}
\newtheorem{remark}[theorem]{Remark}

\let\phi=\varphi
\def\epsilon{\varepsilon}

\def\R{\mathbb{R}}
\def\O{\mathbb{O}}

\def\eps{\varepsilon}

\def\0{\mathbf{0}}

\def\P{\mathcal{P}}
\def\CA{A_+^\circ}
\def\CB{B_+^\circ}
\def\ol{\overline}
\DeclareMathOperator{\diam}{diam}
\DeclareMathOperator{\Inv}{Inv}
\DeclareMathOperator{\Isom}{Isom}
\DeclareMathOperator{\Aut}{Aut}
\DeclareMathOperator{\Span}{Span}
\DeclareMathOperator{\Log}{Log}
\DeclareMathOperator{\Exp}{Exp}
\DeclareMathOperator{\esssup}{ess\; sup}

\newcommand{\comment}[1]{}
\newcommand{\norm}[1]{\left\Vert #1 \right\Vert}

\numberwithin{equation}{section}

% definitions specific to this author guide only
%\newcommand*{\code}[1]{{\mdseries\texttt{#1}}}
%\newcommand*{\pkg}[1]{{\mdseries\textsf{#1}}}
%\renewcommand{\topfraction}{0.99}
%\renewcommand{\contentsname}{Contents\\{\footnotesize\normalfont(A table
%of contents should normally not be included)}}
%
\textheight = 240mm
\textwidth = 176mm

\let\epsilon=\varepsilon

%-----------------BEGINDOCUMENT--------------------------------------

\makeatletter
\def\@maketitle{%
  \newpage
  \null
  \vskip 2em%
  \begin{center}%
  \let \footnote \thanks
    {\Large\bfseries \@title \par}%
    \vskip 1.5em%
    {\normalsize
      \lineskip .5em%
      \begin{tabular}[t]{c}%
        \@author
      \end{tabular}\par}%
    \vskip 1em%
    {\normalsize \@date}%
  \end{center}%
  \par
  \vskip 1.5em}
\makeatother

\begin{document}

\title{\sc \LARGE Hilbert and Thompson isometries on cones in JB-algebras}

\author{Bas Lemmens%
\thanks{Email: \texttt{B.Lemmens@kent.ac.uk}}}
\affil{School of Mathematics, Statistics \& Actuarial Science,
University  of Kent, Canterbury, Kent CT2 7NX, UK.}

\author{Mark Roelands%
\thanks{Email: \texttt{mark.roelands@gmail.com}}}
\affil{Unit for BMI, North-West University, Private Bag X6001-209, Potchefstroom 2520, South Africa}

\author{Marten Wortel%
\thanks{Email: \texttt{marten.wortel@gmail.com}}}
\affil{Unit for BMI, North-West University, Private Bag X6001-209, Potchefstroom 2520, South Africa}
\maketitle
\date{}
%\date{\today}

\begin{abstract}
Hilbert's and Thompson's metric spaces on the interior of cones in JB-algebras are important examples of symmetric Finsler spaces. In this paper we  characterize the Hilbert's metric isometries on the interiors of cones in JBW-algebras, and the Thompson's metric isometries on the interiors of cones in JB-algebras.  These characterizations  generalize work by Bosch\'e on the Hilbert's and Thompson's metric isometries on  symmetric cones, and work by  Hatori and Moln\'ar on  the Thompson's metric isometries  on the cone of positive selfadjoint elements in a unital $C^*$-algebra.  To obtain the results  we develop  a variety of new  geometric and Jordan algebraic techniques.
\end{abstract}

{\small {\bf Keywords:} Hilbert's metric, Thompson's metric, order unit spaces, JB-algebras, isometries, symmetric Banach-Finsler manifolds.}

{\small {\bf Subject Classification:} Primary 58B20; Secondary 32M15}

%\tableofcontents

\section{Introduction}

On the interior $A_+^\circ$ of the cone in an order unit space $A$ there exist two important metrics: Hilbert's metric and Thompson's metric. Hilbert's metric goes back to  Hilbert \cite{Hil}, who defined a metric $\delta_H$ on an open bounded convex set $\Omega$ in a finite dimensional real vector space $V$ by
\[
\delta_H(a,b):=\log \left(\frac{\|a'-b\|}{\|a'-a\|}\frac{\|b'-a\|}{\|b'-b\|}\right),
\]
 where $a'$ and $b'$ are the points of intersection of the line through $a$ and $b$ and $\partial \Omega$ such that $a$ is between $a'$ and $b$, and $b$ is between $b'$ and $a$.
The Hilbert's metric spaces $(\Omega,\delta_H)$  are Finsler manifolds that  generalize Klein's model of the real hyperbolic space. They  play a role in the solution of Hilbert's Fourth problem \cite{AP}, and possess features of nonpositive curvature \cite{Ben,KN}. In recent years there has been increased interest in the geometry of Hilbert's metric spaces, see \cite{Handb} for an overview. In this paper we shall work with a slightly more general version of Hilbert's metric, which is a metric between pairs of the rays in the interior of the cone. It is defined  in terms of the partial ordering of the cone and  was introduced by Birkhoff \cite{Bi}. It has found numerous applications in the spectral theory of linear and nonlinear operators, ergodic theory, and fractal analysis,  see \cite{LNBook,LNSurv,Liv,Metz,NMem,Nu,Sa} and the references therein.

Thompson's metric was introduced by Thompson in \cite{Thom}, and is also  a useful tool in the spectral theory of operators on cones. If the order unit space is complete, the resulting Thompson's metric space is a prime example of a Banach-Finsler manifold.  Moreover, if the order unit space is a JB-algebra (which is a simultaneous generalization of both a Euclidean Jordan algebra as well as the selfadjoint elements of a $C^*$-algebra), then the Banach-Finsler manifold is symmetric and possesses certain features of nonpositive curvature \cite{ACS,CPR1,CPR2,LL1,LL2,Lim,Neeb,Nu,Up}.  This is one of the main reasons why Thompson's metric is of interest in the study of the geometry of spaces of positive operators.

It appears that  understanding the isometries of Hilbert's and Thompson's  metrics on the interiors of cones in order unit spaces is closely linked with the theory of JB-algebras. Evidence for this link was provided by Walsh \cite{Wa}, who showed, among other things, that for finite dimensional order unit spaces $A$,  the Hilbert's metric isometry group on $A_+^\circ$ is not equal to the group of projectivities of $A_+^\circ$ if and only if $A$ is a Euclidean Jordan algebra whose cone is not Lorentzian (\cite[Corollary~1.4]{Wa}). Moreover, in that case,  the group of projectivities  has index 2 in the isometry group, and the additional isometries are obtained by adjoining  the map induced by $ a\in A_+^\circ\mapsto a^{-1}\in A_+^\circ$.  At present it is unknown if this  result has an infinite dimensional extension.

The main objective of this paper is to characterize the Hilbert's metric isometries on the interiors of cones in JBW-algebras (a subclass of JB-algebras that includes both the selfadjoint elements of von Neumann algebras as well as Euclidean Jordan algebras), and the Thompson's metric isometries on the interiors of cones in JB-algebras.  Unfortunately our methods do not yield a characterization of the Hilbert's metric isometries for general JB-algebras, as we require the existence of sufficiently many projections.

Our results generalize and complement a number of earlier works. In 2009 Moln\'ar \cite{Mol1} described the Thompson's and Hilbert's metric isometries on the selfadjoint operators on a Hilbert space of dimension at least three, using geometric means to show that these isometries preserve commutativity. A different approach was taken in 2010 by Hatori and Moln\'ar \cite{HM} where they characterized Thompson's metric isometries on the positive cone of a $C$*-algebra by showing that they induce a linear norm isometry on the selfadjoint elements of the $C$*-algebra. Similarly, Bosch\'e \cite{Bo} described Thompson's and Hilbert's metric isometries on a symmetric cone in 2012 by showing that they induce norm isometries on the whole Euclidean Jordan algebra: a Thompson's metric isometry yields a linear JB-norm isometry, and a Hilbert's metric isometry yields a linear variation norm isometry. On JB-algebras, linear variation norm isometries are exactly linear maps preserving the maximal deviation, the quantum analogue of the maximal standard deviation, see \cite{Mol2,MB, Ham}. These were charactized on the selfadjoint elements of von Neumann algebras without a type $I_2$ summand by Molnar \cite{Mol2} in 2010, and in 2012 this result was extended to JBW-algebras without a type $I_2$ summand by Hamhalter \cite{Ham}.

Our approach is to show that Thompson's and Hilbert's metric isometries on the positive cone of a JB-algebras induce linear norm isometries on the whole JB-algebra: the Thompson's metric isometries yield norm isometries, whereas the Hilbert's metric isometries induce variation norm isometries, see Theorem~\ref{t:inducing_isometry}. This extends the approach in \cite{Bo} and \cite{HM}. By using a characterization of linear norm isometries of JB-algebras due to Isidro and Rodr\'iguez-Palacios \cite{IRP} we then characterize the Thompson's metric isometries of JB-algebras, generalizing results of \cite{Bo} and \cite{HM}. 
As for Hilbert's metric, we restrict to JBW-algebras. If there is no type $I_2$ summand, Hamhalter's characterization of the linear variation norm isometries mentioned above yields the desired description of the Hilbert's metric isometry. But in general this result can not be used, so we exploit the fact that in our case the variation norm isometry is induced by a Hilbert's metric isometry to obtain the desired characterization.

 This characterization also complements our earlier work \cite{LRW1}, in which we  considered the order unit space $C(K)$ consisting of all continuous functions on a compact Hausdorff space $K$. In the same paper we showed that the group of Hilbert's metric isometries is equal to the group of projectivities if the Hilbert's metric is uniquely geodesic.  Other works on Hilbert's metric isometries and Thompson's metric isometries on finite dimensional cones include \cite{dlH, LW, MT, Sp}.

\bigskip

The structure of the paper is as follows. 

Section~\ref{sec:prelims} is our preliminary section. We first introduce Thompson's and Hilbert's metrics and JB(W)-algebras. We then investigate some properties that will prove to be very useful in characterizing the isometries for both metrics. In particular, we characterize when there exist unique geodesics for Thompson's and Hilbert's metric  between two elements of a JB-algebra, and we study the interplay between geometric means and the isometries for both metrics. These findings also generalize earlier work done on Euclidean Jordan algebras and $C^*$-algebras, and result in the crucial Theorem~\ref{t:inducing_isometry} mentioned above.

In Section~\ref{sec:thomp} we characterize the isometries for Thompson's metric, and we exploit this result to describe the corresponding isometry group of a direct product of simple JB-algebras in terms of the automorphism groups of the components.

Finally, we consider Hilbert's metric isometries in Section~\ref{sec:hilbert}. Since the extreme points of the unit ball in the quotient coincide with the equivalence classes of nontrivial projections, every Hilbert's metric isometry induces a bijection on the projections. At this point we restrict to JBW-algebras as they contain a lot of projections in contrast to JB-algebras. By using geometric properties of Hilbert's metric as well as operator algebraic methods, we obtain that the above bijection on the projections is actually a projection orthoisomorphism: two projections are orthogonal if and only if their images are orthogonal. Dye's classical theorem \cite{Dye} shows that every projection orthoisomorphism between von Neumann algebras without a type $I_2$ summand extends to a Jordan isomorphism on the whole algebra. This was extended by Bunce and Wright \cite{BW} to JBW-algebras, and we use this result to extend our projection orthoisomorphism defined outside the type $I_2$ summand to a Jordan isomorphism. It remains to take care of the type $I_2$ summand, which we are able to do using a characterization of type $I_2$ JBW-algebras due to Stacey \cite{S} and the explicit fact that our projection orthoisomorphism comes from a linear map on the quotient. Thus we are able to extends the whole projection orthoisomorphism to a Jordan isomorphism, which then easily yields the main result of our paper, Theorem~\ref{t:hilbertisoms}, which we repeat below for the reader's convenience. The set $\ol{M}_+^\circ$ denotes the set of rays in $M_+^\circ $, and $U_b$ denotes the quadratic representation of $b$. 

\begin{theorem}\label{t:introduction}
If $M$ and $N$ are JBW-algebras, then $f \colon \ol{M}_+^\circ \to \ol{N}_+^\circ$ is a bijective Hilbert's metric isometry if and only if
\begin{equation*}
 f(\overline{a}) = \ol{U_b J(a^\epsilon)} \mbox{\quad for all }\ol{a}\in \ol{M}_+^\circ,
 \end{equation*}
where $\epsilon\in\{-1,1\}$, $b\in N_+^\circ$, and $J\colon M\to N$ is a Jordan isomorphism. In this case $b\in f(\ol{e})^{\frac{1}{2}}$.
\end{theorem}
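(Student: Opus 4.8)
The plan splits along the two implications.

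\emph{Sufficiency} is the routine direction. I would verify that each ingredient in the formula is a bijective $\delta_H$-isometry of the relevant ray space and that these isometries compose. Writing $M(a/b)=\inf\{\lambda>0:a\le\lambda b\}$, Hilbert's metric takes the form $\delta_H(\ol a,\ol b)=\log M(a/b)+\log M(b/a)$, so it is preserved by any linear order isomorphism of the cone. A Jordan isomorphism $J$ is such a map, and for $b\in N_+^\circ$ so is the quadratic representation $U_b$, which is a linear automorphism of $N_+$. For inversion, order reversal gives $a\le\lambda b\iff b^{-1}\le\lambda a^{-1}$, hence $M(a/b)=M(b^{-1}/a^{-1})$ and $\delta_H(\ol a,\ol b)=\delta_H(\ol{a^{-1}},\ol{b^{-1}})$; thus inversion is an isometry, accounting for $\epsilon=-1$. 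Composing, every map $\ol a\mapsto\ol{U_bJ(a^\epsilon)}$ is a bijective Hilbert's metric isometry, and since $J(e)=e$ and $U_be=b^2$ it sends $\ol e$ to $\ol{b^2}$, matching the normalisation $b\in f(\ol e)^{1/2}$.

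For \emph{necessity}, I would first reduce to a based isometry. Put $\ol c:=f(\ol e)$ and choose $b\in c^{1/2}$; since $U_{b}^{-1}=U_{b^{-1}}$ is a cone automorphism of $N$ with $U_{b^{-1}}c=e$, the composite $g:=U_{b}^{-1}\circ f$ is again a bijective Hilbert's metric isometry but now fixes $\ol e$. Theorem~\ref{t:inducing_isometry} then equips $g$ with a linear variation-norm isometry $T$ on the quotient $M/\R e$. Because the extreme points of the variation unit ball are exactly the classes of nontrivial projections, $T$ permutes these extreme points and so induces a bijection $\phi$ between the nontrivial projections of $M$ and of $N$ (with $p$ and $e-p$ identified). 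Everything now hinges on showing $\phi$ respects the Jordan structure.

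The crux, and the step I expect to be the main obstacle, is to promote $\phi$ to a \emph{projection orthoisomorphism}, i.e.\ $p\perp q\iff\phi(p)\perp\phi(q)$. Orthogonality is algebraic, so it must be recast as a metric condition on the rays --- through betweenness and collinearity of projection rays, or the behaviour of geodesics and the horofunction boundary of $\delta_H$ --- so that it is visibly preserved by the isometry $g$; marrying this geometry with the operator-algebraic facts about projections in a JBW-algebra is where the new techniques are needed. Granting the orthoisomorphism, I would invoke the Dye--Bunce--Wright theorem \cite{Dye,BW}: on the part of $M$ carrying no type $I_2$ summand, $\phi$ extends to a Jordan isomorphism onto the corresponding part of $N$. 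The genuinely delicate remaining case is the type $I_2$ summand, where Dye's theorem does not apply; here I would combine Stacey's structure theorem \cite{S} for type $I_2$ JBW-algebras with the fact that $\phi$ is the extreme-point action of the \emph{linear} map $T$ to pin $\phi$ down on that summand as well. Splicing the pieces produces a Jordan isomorphism $J\colon M\to N$ with $T=\epsilon J$ on the quotient, the sign $\epsilon\in\{-1,1\}$ recording whether or not $g$ secretly contains the order-reversing inversion.

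It remains to recover $g$ from its linearisation. A rigidity argument --- a based Hilbert's metric isometry is determined by its induced variation isometry, since the latter fixes the action on projection classes while $g$ fixes $\ol e$, and a JBW-algebra has an abundance of projections --- shows that $g(\ol a)=\ol{J(a^\epsilon)}$. Undoing the normalisation gives $f=U_b\circ g$, that is $f(\ol a)=\ol{U_bJ(a^\epsilon)}$ with $b\in c^{1/2}=f(\ol e)^{1/2}$, which is the asserted form. Thus the whole weight of the proof sits in the orthoisomorphism step: extracting algebraic orthogonality from the Hilbert geometry, and disposing of the recalcitrant type $I_2$ summand.
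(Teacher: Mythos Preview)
Your outline is correct and follows the paper's approach closely: normalise so that $g:=U_{f(\ol e)^{-1/2}}\circ f$ fixes $\ol e$, invoke Theorem~\ref{t:inducing_isometry} to obtain the linear variation isometry $S$, use the extreme points of the variation ball to get a bijection $\theta$ on nontrivial projections, show that for either $g$ or $\iota\circ g$ this $\theta$ is an orthoisomorphism (the paper does this via the geometry of orthogonal simplices and Gromov products, with a chain-of-simplices argument to make the choice of $\epsilon$ global), extend $\theta$ to a Jordan isomorphism via Bunce--Wright off the type $I_2$ part and via Stacey's structure theorem plus the linearity of $S$ on the type $I_2$ part, and unwind. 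Two small corrections: distinct nontrivial projections $p$ and $p^\perp$ yield \emph{distinct} (in fact antipodal) classes $[p^\perp]=-[p]$ in $[M]$, so they are not identified; and the final recovery needs no separate rigidity argument, since by definition $g=\Exp\circ S\circ\Log$, and once $S=\epsilon\,\ol J$ with $J$ a Jordan isomorphism (hence commuting with the functional calculus) one reads off $g(\ol a)=\ol{J(a^\epsilon)}$ directly.
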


We claim that this result extends Molnar's theorem (\cite[Theorem~2]{Mol1}), reformulated below using our notation.

\begin{theorem}[Molnar]
Let $H$ be a complex Hilbert space with $\dim(H) \geq 3$ and let $f \colon B(H)_+^\circ \to B(H)_+^\circ$ be a bijective Hilbert's metric isometry. Then there is an invertible bounded linear or conjugate linear operator $z \colon H \to H$ and an $\eps \in \{\pm 1 \}$ such that
$$ f(\overline{a}) =  \overline{za^{\eps}z^*}. $$
\end{theorem}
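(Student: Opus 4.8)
The plan is to derive Molnár's theorem as a specialization of Theorem~\ref{t:introduction} to $M=N=B(H)_{\mathrm{sa}}$, and then to repackage the abstract data $(\eps,b,J)$ produced by that theorem as a single invertible operator $z$ on $H$ that is linear or conjugate-linear. Note first that $B(H)_{\mathrm{sa}}$, the selfadjoint part of the von Neumann algebra $B(H)$, is a JBW-algebra whose open cone $B(H)_+^\circ$ consists exactly of the positive invertible operators, so Theorem~\ref{t:introduction} applies verbatim, and a bijective Hilbert's metric isometry of the ray space $\ol{B(H)_+^\circ}$ is precisely the object Molnár considers.

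First I would invoke Theorem~\ref{t:introduction} to write any such $f$ as $f(\ol a)=\ol{U_bJ(a^\eps)}$, where $\eps\in\{\pm1\}$, $b\in B(H)_+^\circ$, and $J$ is a Jordan isomorphism of $B(H)_{\mathrm{sa}}$. Since $B(H)_{\mathrm{sa}}$ is a special Jordan algebra, its quadratic representation is the associative expression $U_b(x)=bxb$, so that $U_bJ(a^\eps)=b\,J(a^\eps)\,b$. The central step is then to identify $J$ concretely. Retaining Molnár's standing assumption $\dim H\ge 3$, every Jordan isomorphism of $B(H)_{\mathrm{sa}}$ extends complex-linearly to a Jordan $*$-isomorphism of the factor $B(H)$, which by Kadison's structure theorem is either a $*$-automorphism or a $*$-anti-automorphism, and in either case is spatially implemented: in the first case $J(x)=uxu^*$ for some unitary $u$, and in the second case $J(x)=u\,CxC\,u^*$ for some unitary $u$ and some conjugation (antiunitary involution) $C$ on $H$, where I have used $x^*=x$ for selfadjoint $x$.

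It remains to absorb $b$, $u$, and, where present, $C$ into one operator. In the $*$-automorphism case I set $z:=bu$, which is bounded, linear and invertible, and compute $b\,J(a^\eps)\,b=bu\,a^\eps\,u^*b=za^\eps z^*$. In the $*$-anti-automorphism case I set $z:=buC$, which is bounded, conjugate-linear and invertible; a short computation with the conjugate-linear adjoint yields $z^*=Cu^*b$, whence $b\,J(a^\eps)\,b=buC\,a^\eps\,Cu^*b=za^\eps z^*$. In either case $f(\ol a)=\ol{za^\eps z^*}$ with $z$ linear or conjugate-linear respectively, which is exactly the conclusion of Molnár's theorem.

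The only genuinely delicate point is this last repackaging. One must apply the correct structure theorem for Jordan isomorphisms of $B(H)$, and, in the anti-automorphism case, keep careful track of the conjugate-linear adjoint so that $za^\eps z^*$ really reproduces $buC\,a^\eps\,Cu^*b$; once the bookkeeping for $z^*$ is in place, the identification is immediate. Everything else is a direct transcription, so I expect the structure-theory invocation together with the conjugate-linear adjoint computation to be the main (and essentially only) obstacle.
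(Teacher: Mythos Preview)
Your proposal is correct and follows essentially the same approach as the paper: apply Theorem~\ref{t:introduction}, identify the Jordan isomorphism $J$ spatially, and absorb $b$ into the implementing operator to obtain $z$. The only cosmetic difference is that the paper cites \cite[Theorem~2.2]{IRP} to get $Ja=uau^*$ with $u$ unitary or anti-unitary in one stroke (so $z=bu$ in both cases), whereas you go through Kadison's theorem and handle the anti-automorphism case with an explicit conjugation $C$ and $z=buC$; these are the same argument with different packaging.
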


Indeed, \cite[Theorem~2.2]{IRP} states that all Jordan isomorphisms $J$ of $B(H)$ are of the form $Ja = uau^*$, where $u$ is a unitary or anti-unitary (i.e., conjugate linear unitary) operator. Hence
$$ U_b J(a^\eps) = bu a^\eps u^* b = (bu) a^\eps (bu)^*. $$
It remains to show that any invertible (conjugate) linear operator $z \in B(H)$ can be written as $bu$, with a positive $b$ and (anti-)unitary $u$. For linear operators this is just the polar decomposition, and by considering a conjugate linear operator to be a linear operator from $H$ to its conjugate Hilbert space, we obtain the same decomposition for conjugate linear operators.

In view of \cite[Corollary~1.4]{Wa} mentioned above we make the following contribution in Proposition~\ref{isom group}, where we show that the isometry group for Hilbert's metric on JBW-algebras is not equal to the group of projectivities if and only if the cone is not a Lorentz cone.

\section{Preliminaries}\label{sec:prelims}
In this section we collect some basic definitions  and recall several useful facts concerning Hilbert's and Thompson's metrics and cones in JB-algebras.

\subsection{Order unit spaces}
Let $A$ be a partially ordered real vector space with cone $A_+$. So, $A_+$ is convex, $\lambda A_+\subseteq A_+$ for all $\lambda\ge 0$, $A_+\cap -A_+=\{0\}$, and the  partial ordering $\leq$ on $A$ is given by $a\leq b$ if $b-a\in A_+$. Suppose that there exists an {\em order unit} $u\in A_+$, i.e., for each $a\in A$ there exists $\lambda>0$ such that $-\lambda u\leq a\leq \lambda u$. Furthermore assume that $A$ is {\em Archimedean}, that is to say, if $na\leq u$ for all $n=1,2,\ldots$, then $a\leq 0$.  In that case $A$ can be equipped with the {\em order unit norm},
\[
\|a\|_u:=\inf\{\lambda>0 \colon -\lambda u\leq a\leq \lambda u\},
\]
and $(A,\|\cdot\|_u)$ is called an {\em order unit space}, see \cite{HO}.
It is not hard to show, see for example \cite{LRW1}, that $A_+$ has nonempty interior $A_+^\circ$ in $(A,\|\cdot\|_u)$ and $A_+^\circ=\{a\in A\colon \mbox{$a$ is an order unit of $A$}\}$.

On $A_+^\circ$  Hilbert's metric and Thompson's metric are defined as follows. For $a,b\in A_+^\circ$ let
\[
M(a/b):=\inf\{\beta>0\colon a\leq \beta b\}.
\]
Note that as $b\in A_+^\circ$ is an order unit, $M(a/b)<\infty$. On $A_+^\circ$, {\em Hilbert's metric} is given by
\begin{equation}\label{d_H}
d_H(a,b) = \log M(a/b)M(b/a),
\end{equation}
and {\em Thompson's metric} is defined by
\begin{equation}\label{d_T}
d_T(a,b) = \log \max\{M(a/b),M(b/a)\}.
\end{equation}
It is well known (cf.  \cite{LNBook,NMem}) that $d_T$ is a metric on $A_+^\circ$, but $d_H$ is not, as $d_H(\lambda a,\mu b)=d_H(a,b)$ for all $\lambda,\mu>0$ and $a,b\in A_+^\circ$. However, $d_H(a,b)=0$ for $a,b\in A_+^\circ$ if and only if $a=\lambda b$ for some $\lambda>0$, so  that $d_H$ is a metric on the set of  rays in $A_+^\circ$, which we shall denote by $\overline{A}_+^\circ$. Elements of $\overline{A}_+^\circ$ will be denoted by $\ol{a}$, and if $\Omega \subseteq A_+^\circ$ the set of rays through $\Omega$ will be denoted by $\overline{\Omega}$.

\subsection{JB-algebras}
A \emph{Jordan algebra} $(A, \circ)$ is a commutative, not necessarily associative algebra such that
\[
a \circ (b \circ a^2) = (a \circ b) \circ a^2 \mbox{\quad  for all }a,b \in A.
\]
A \emph{JB-algebra} $A$ is a normed, complete real Jordan algebra satisfying,
\begin{align*}
\norm{a \circ b} &\leq \norm{a}\norm{b}, \\
\norm{a^2} &= \norm{a}^2, \\
\norm{a^2} &\leq \norm{a^2 + b^2}
\end{align*}
for all $a,b \in A$. An important example of a JB-algebra is the set of selfadjoint elements of a $C^*$-algebra $A$, equipped with the Jordan product $a \circ b := (ab + ba)/2$. By the Gelfand-Naimark theorem, this JB-algebra is a norm closed Jordan subalgebra of the selfadjoint bounded operators on a Hilbert space; such an algebra is called a \emph{JC-algebra}. By \cite[Corollary 3.1.7]{HO}, Euclidean Jordan algebras are another example of JB-algebras. We can think of JB-algebras as a simultaneous generalization of both the selfadjoint elements of $C^*$-algebras as well as Euclidean Jordan algebras.

Throughout the paper, we will assume that all JB-algebras are unital with  unit  $e$.

The set of invertible elements of $A$ is denoted by $\Inv(A)$. The \emph{spectrum} of $a \in A$, $\sigma(a)$, is defined to be the set of $\lambda \in \R$ such that $a - \lambda e$ is not invertible in JB$(a,e)$, the JB-algebra generated by $a$ and $e$ (\cite[3.2.3]{HO}). There is a continuous functional calculus: JB$(a,e) \cong C(\sigma(a))$. Both the spectrum and the functional calculus coincide with the usual notions in both Euclidean Jordan algebras as well as JC-algebras.

The elements $a,b \in A$ are said to \emph{operator commute} if $a \circ (b \circ c) = b \circ (a \circ c)$ for all $c \in A$. In a JC-algebra, two elements operator commute if and only if they commute in the $C^*$-multiplication (\cite[Proposition~1.49]{AS}). In the sequel we shall write the Jordan product of two operator commuting elements $a,b\in A$ as $ab$ instead of $a\circ b$. The \emph{center} of $A$ consists of all elements that operator commute with all elements of $A$, and it is an associative JB-subalgebra of $A$. Every associative JB-algebra is isomorphic to $C(K)$ for some compact Hausdorff space $K$ (\cite[Theorem 3.2.2]{HO}).

The cone of elements with nonnegative spectrum is denoted by $A_+$, and equals the set of squares by the functional calculus, and its interior $A_+^\circ$ consists of all elements with strictly positive spectrum, or equivalently, all  elements in $A_+\cap\mathrm{Inv}(A)$. This cone turns $A$ into an order unit space with order unit $e$, i.e.,
\[ \norm{a} = \inf \{ \lambda > 0: -\lambda e \leq a \leq \lambda e \}. \]
Note that the JB-norm is not the same as the usual norm in a Euclidean Jordan algebra.

The \emph{Jordan triple product} $\{ \cdot, \cdot, \cdot \}$ is defined as
\[ \{a,b,c \} := (a \circ b) \circ c + (c \circ b) \circ a - (a \circ c) \circ b, \]
for $a,b,c \in A$. In a JC-algebra one easily verifies that $\{a,b,c\} = (abc+cba)/2$. For $a \in A$, the linear map $U_a \colon A \to A$ defined by $U_a b := \{a,b,a\}$ will play an important role and  is called the \emph{quadratic representation} of $a$.

By the Shirshov-Cohn theorem for JB-algebras \cite[Theorem 7.2.5]{HO}, the unital JB-algebra generated by two elements is a JC-algebra, which shows all but the fifth of the following identities for JB-algebras, since $U_a b = aba$  in JC-algebras.  (For the rest of the paper, the operator-algebraic reader is encouraged to think of this equality whenever the quadratic representation appears.)
\begin{equation}
\begin{aligned}
(U_a b)^2 &= U_a U_b a^2 \quad &\forall a,b \in A. \\
U_a b &\in A_+ \quad &\forall a \in A, \forall b \in A_+. \\
U_a^{-1} &= U_{a^{-1}} \quad &\forall a \in \Inv(A). \\
(U_a b)^{-1} &= U_{a^{-1}} b^{-1} \quad &\forall a,b \in \Inv(A). \\
U_{U_a b} &= U_a U_b U_a \quad &\forall a,b \in A. \\
U_a e &= a^2 \quad &\forall a \in A. \\
U_{a^\lambda} a^\mu &= a^{2\lambda + \mu} \quad &\forall a \in A, \forall \lambda, \mu \in \R.
\end{aligned}
\label{e:U_facts}
\end{equation}
A proof of the fifth identity can be found in \cite[2.4.18]{HO}, as well as  proofs of the other identities.

A JB-algebra $A$ induces an algebra structure on $\ol{A}_+^\circ$ by $\ol{a}\circ\ol{b}:=\ol{a\circ b}$, which is well-defined. We can also define $\ol{a}^{\alpha} := \ol{a^\alpha}$ for $\alpha\in\mathbb{R}$.  For $a\in \mathrm{inv}(A)$, the quadratic representation $U_a$ is an order isomorphism, and induces a well defined map $U_{\ol{a}}$ on $\ol{A}_+^\circ$ by
\[
U_{\ol{a}}(\ol{b}) := \ol{U_a(b)}\mbox{\quad for all }\ol{b}\in \ol{A}_+^\circ.
\]
When studying Hilbert's metric on  $\ol{A}_+^\circ$ in JB-algebras, the  \emph{variation seminorm}
$\norm{\cdot}_v$ on $A$ given by,
\[
\norm{a}_v := \diam \sigma(a)=\max\sigma(a)-\min\sigma(a),
\]
will play an important role. The kernel of this seminorm is the span of $e$, and on the quotient space $[A]:=A / \Span(e)$ it is a norm. To see this we show that if
$\|\cdot\|_q$ is the quotient norm of $2\|\cdot\|$ on $[A]$, then $\|[a]\|_q =\|[a]\|_v$ for all $[a]\in [A]$.   Indeed, for $[a]\in [A]$, using $\inf_{\lambda\in\R}\max\{t-\lambda,s+\lambda\}=(t+s)/2$, we have that
\begin{eqnarray*}
\|[a]\|_q & := & 2\inf_{\mu\in\mathbb{R}}\|a -\mu e\|\\
 & = & 2\inf_{\mu\in\mathbb{R}} \max_{\lambda \in \sigma(a)}|\lambda -\mu|\\
  & = & 2\inf_{\mu\in\mathbb{R}} \max\big\{{\textstyle\max_{\lambda \in \sigma(a)}}(\lambda -\mu), {\textstyle\max_{\lambda \in \sigma(a)}}(-\lambda +\mu)\big\}\\
  & = &  \max\sigma(a) + \max-\sigma(a)=\max\sigma(a)-\min\sigma(a)\\
& = & \|[a]\|_v.
\end{eqnarray*}
Note that the map $\Log\colon A_+^\circ\to A$ given by $a\mapsto \log(a)$ is a bijection,  whose inverse $\Exp$ is given by $a\mapsto \exp(a)$. Furthermore, as  $\log(\lambda a ) =\log(a)+ \log(\lambda) e$ for all $a\in A_+^\circ$ and $\lambda>0$, the map $\Log$ induces a bijection from $\ol{A}_+^\circ$ onto $[A]$ given by $\log\ol{a} = [\log a]$. Its inverse $\mathrm{Exp}\colon [A]\to \ol{A}_+^\circ$ is given by $\exp([a]) = \ol{\exp(a)}$ for $[a]\in [A]$.

A \emph{JBW-algebra} is the Jordan analogue of a von Neumann algebra: it is a JB-algebra which is monotone complete and has a separating set of normal states, or equivalently, a JB-algebra that is a dual space. In JBW-algebras the spectral theorem holds, which implies in particular that the linear span of projections is norm dense. If $p$ is a projection, then the complement $e-p$ will be denoted by $p^\perp$. Every JBW-algebra decomposes into a direct sum of a type I, II, and III JBW-algebras. A JBW-algebra with trivial center is called a \emph{factor}. Every Euclidean Jordan algebra is a JBW-algebra, and a Euclidean Jordan algebra is simple if and only if it is a factor.

\subsection{Order isomorphisms}
An important result we use is \cite[Theorem~1.4]{IRP}, which we state here for the convenience of the reader. A {\em symmetry} is an element $s$ satisfying $s^2 = e$. Note that $s$ is a symmetry if and only if $p := (s+e)/2$ is a projection, and $s = p - p^\perp$.

\begin{theorem}[Isidro, Rodr{\'{\i}}guez-Palacios]\label{t:jbisoms}
 The bijective linear isometries from $A$ onto $B$ are the mappings of the form $a \mapsto s Ja$, where $s$ is a
central symmetry in $B$ and $J \colon A \to B$ a Jordan isomorphism.
\end{theorem}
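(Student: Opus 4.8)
The plan is to prove both implications, disposing of the routine direction first and then reducing the substantive direction to the unital case. For the "if" direction, suppose $T(a)=sJa$ with $s$ a central symmetry in $B$ and $J\colon A\to B$ a Jordan isomorphism. A Jordan isomorphism sends $e$ to $e$ and preserves squares, hence it restricts to an isometric isomorphism $\mathrm{JB}(a,e)\cong C(\sigma(a))$ onto $\mathrm{JB}(Ja,e)\cong C(\sigma(Ja))$; in particular $\sigma(Ja)=\sigma(a)$ and $\|Ja\|=\|a\|$, so $J$ is a bijective isometry. Writing the central symmetry as $s=p-p^\perp$ for a central projection $p$, the algebra splits as $B=U_pB\oplus U_{p^\perp}B$ (with $U_pb=p\circ b$ since $p$ is central), and left Jordan multiplication by $s$ acts as $+1$ on the first summand and $-1$ on the second; since the order-unit norm of such a direct sum is the maximum of the norms of the summands, $b\mapsto s\circ b$ is a bijective isometry. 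Composing, $T=L_s\circ J$ is a bijective linear isometry.

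For the "only if" direction, let $T\colon A\to B$ be a bijective linear isometry and put $s:=T(e)$. The heart of the matter is to show that $s$ is a central symmetry and that $J:=s\circ T(\cdot)$ is a Jordan isomorphism. Once $s$ is known to be a central symmetry, the computation above shows that $L_s$ is an involutive isometry, so $J$ is again a bijective linear isometry; moreover $J(e)=s\circ s=s^2=e$, so $J$ is unital, and $s\circ J(a)=s^2\circ T(a)=T(a)$ recovers $T(a)=sJa$ in the desired form. Thus everything reduces to two claims: (i) $s=T(e)$ is a central symmetry; and (ii) every unital bijective linear isometry between JB-algebras is a Jordan isomorphism.

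The cleanest route to both claims passes through the triple structure. Each JB-algebra embeds in its complexification, a JB*-algebra, which is in particular a JB*-triple under $\{x,y,z\}=(x\circ y^*)\circ z+(z\circ y^*)\circ x-(x\circ z)\circ y^*$, and a bijective real-linear isometry of JB-algebras extends to a bijective complex-linear isometry of the complexifications. By Kaup's Banach--Stone theorem for JB*-triples, such an isometry is exactly a triple isomorphism $\Phi$. The image $\Phi(e)=s$ is then a unitary tripotent, which exhibits the complexification of $B$ as a JB*-algebra carrying two unital structures, one with unit $e$ and one with unit $s$; comparing the two isotopes forces $s$ to be a self-adjoint element of $B$ with $s^2=e$ that operator commutes with everything, i.e. a central symmetry, and at the same time shows that $L_s\circ T$ intertwines the Jordan products, delivering (i) and (ii) simultaneously.

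I expect the main obstacle to be exactly this extraction of algebraic data (that $s$ is central and that $L_s T$ is multiplicative) from a purely metric hypothesis; this is the Jordan analogue of Kadison's theorem that a unital surjective linear isometry of operator algebras is a Jordan isomorphism. The effort is concentrated either in the JB*-triple machinery—justifying that a real isometry complexifies isometrically and then invoking Kaup—or, if one prefers a self-contained order-theoretic argument, in the following upgrade. A unital isometry $J$ satisfies $0\le a\le e\iff\|2a-e\|\le1\iff 0\le Ja\le e$, so $J$ is a unital order isomorphism in both directions; the remaining difficulty is to promote a unital order isomorphism to a genuine Jordan isomorphism, which already requires controlling how $J$ interacts with squares and the functional calculus and is essentially as hard as the theorem itself. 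It is this promotion step, rather than any of the surrounding bookkeeping, that I regard as the genuine core of the proof.
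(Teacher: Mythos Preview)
The paper does not prove this theorem: it is quoted verbatim from \cite[Theorem~1.4]{IRP} and used as a black box, with the single remark that the unital case (a bijective unital linear isometry between JB-algebras is a Jordan isomorphism) is \cite[Theorem~4]{WY}. So there is no in-paper argument to compare against; your proposal is really a sketch of the Isidro--Rodr\'{\i}guez-Palacios proof itself.

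Your outline is essentially the right one and matches how the literature handles it. The ``if'' direction and the reduction scheme $T\mapsto L_{T(e)}\circ T$ are correct, and you have correctly isolated the two genuine tasks: showing that $s=T(e)$ is a \emph{central} symmetry, and invoking Wright--Youngson for the resulting unital isometry. Your route through the JB*-triple category (complexify, apply Kaup's Banach--Stone theorem for JB*-triples, and read off that the image of the unit is a unitary tripotent whose isotope structure forces centrality) is precisely the strategy of \cite{IRP}. You are also right that the order-theoretic alternative you mention collapses back into Wright--Youngson and does not by itself give centrality of $s$.

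The one place where your sketch is still a promise rather than a proof is the step ``a bijective real-linear isometry of JB-algebras extends to a bijective complex-linear isometry of the complexifications.'' This is not automatic: the JB*-norm on $A+iA$ is not simply $\max(\|a\|,\|b\|)$, so isometry of the complexified map needs its own argument (in \cite{IRP} this is handled via the triple structure and the characterisation of the JB*-norm). You flag this yourself, but it is worth emphasising that this is exactly where the work lives; once Kaup applies, the extraction of centrality from the tripotent $s$ is short. Everything else in your write-up is sound bookkeeping.
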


This theorem uses the fact that a bijective unital linear isometry between JB-algebras is a Jordan isomorphism, which is \cite[Theorem~4]{WY}. We use this simpler statement in the following corollary.

\begin{corollary}\label{orderisoms}
Let $A$ and $B$ be order unit spaces, and $T \colon A \to B$ be a unital linear bijection. Then $T$ is an isometry if and only if $T$ is an order isomorphism. Moreover, if $A$ and $B$ are JB-algebras, then these statements are equivalent to $T$ being a Jordan isomorphism.
\end{corollary}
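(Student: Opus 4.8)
The plan is to prove the order-unit-space equivalence first and then feed it into the Jordan-algebraic statement. Throughout I would write $u_A$ and $u_B$ for the order units, so that unitality of $T$ reads $Tu_A = u_B$, and I would treat $T^{-1}$ on the same footing as $T$, noting that it is again a unital linear bijection. The pivotal point is that the cone is encoded purely in the order unit norm together with the order unit: for any $a$ in an order unit space,
\begin{equation*}
a \ge 0 \quad\Longleftrightarrow\quad \bigl\|\, \|a\|_u\, u_A - a \,\bigr\|_u \le \|a\|_u .
\end{equation*}
I would verify this straight from the definition of $\|\cdot\|_u$: if $a \ge 0$ then $0 \le a \le \|a\|_u u_A$, and subtracting from $\|a\|_u u_A$ yields $0 \le \|a\|_u u_A - a \le \|a\|_u u_A$, whence $\bigl\|\,\|a\|_u u_A - a\,\bigr\|_u \le \|a\|_u$; conversely, setting $\lambda = \|a\|_u$, the bound $\|\lambda u_A - a\|_u \le \lambda$ forces $\lambda u_A - a \le \lambda u_A$, i.e. $a \ge 0$.

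For the equivalence of isometry and order isomorphism, the direction from order isomorphism to isometry is immediate: applying the order isomorphism $T$ (and its inverse) to the defining inequalities $-\lambda u_A \le a \le \lambda u_A$ and using $Tu_A = u_B$ shows that these hold if and only if $-\lambda u_B \le Ta \le \lambda u_B$, so $\|Ta\|_u = \|a\|_u$. For the reverse direction I would invoke the displayed characterization: a unital linear isometry preserves both $\|\cdot\|_u$ and the order unit, hence preserves the right-hand side of the equivalence, giving $Ta \ge 0 \iff a \ge 0$; running the same argument for the unital linear isometry $T^{-1}$ shows that $T$ reflects positivity as well, so $T$ is an order isomorphism.

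For the last assertion I would restrict to JB-algebras. The implication from isometry to Jordan isomorphism is the one genuinely hard input, but it is already available: a bijective unital linear isometry between JB-algebras is a Jordan isomorphism by \cite[Theorem~4]{WY}. For the converse, a Jordan isomorphism $J$ is automatically unital, since $J(e_A)$ acts as a unit on $J(A) = B$ and units are unique, and it carries squares to squares; because the cone of a JB-algebra is exactly its set of squares, $J$ restricts to a bijection $A_+ \to B_+$ (applying the same to $J^{-1}$), so $J$ is an order isomorphism and therefore an isometry by the first part. The main obstacle is thus concentrated entirely in the cited result of Wright and Youngson; the remainder reduces to the elementary observation that in an order unit space the cone and the norm determine each other once the unit is fixed.
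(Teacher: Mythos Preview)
Your proof is correct and follows essentially the same approach as the paper: both detect positivity through the norm and the unit (you via the equivalence $a \ge 0 \Leftrightarrow \|\,\|a\|_u u - a\,\|_u \le \|a\|_u$, the paper via the normalized form $\|e-a\| \le 1$ for $\|a\|\le 1$), both handle order isomorphism $\Rightarrow$ isometry straight from the definition, and both invoke \cite[Theorem~4]{WY} for the Jordan direction. The only minor divergence is that for Jordan isomorphism $\Rightarrow$ isometry the paper argues via preservation of spectrum, while you route through preservation of squares and then apply the first part; both are immediate.
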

\begin{proof}
Suppose $T$ is an isometry, and let $a \in A_+$, $\norm{a} \leq 1$. Then $\norm{e-a} \leq 1$, and so $\norm{e - Ta} \leq 1$, showing that $Ta$ is positive. So $T$ is a positive map, and by the same argument $T^{-1}$ is a positive map. (This argument is taken from the first part of \cite[Theorem 4]{WY}.)

Conversely, if $T$ is an order isomorphism, then $-\lambda e \leq a \leq \lambda e$ if and only if $-\lambda e \leq Ta \leq \lambda e$, and so $T$ is an isometry.

Now suppose that $A$ and $B$ are JB-algebras. If $T$ is an isometry, then $T$ is a Jordan isomorphism by \cite[Theorem 4]{WY}. Conversely, if $T$ is a Jordan isomorphism, then $T$ preserves the spectrum, and then also the norm since $\norm{a} = \max |\sigma(a)|$.
\end{proof}

This corollary will be used to show the following proposition. For Euclidean Jordan algebras this proposition has been proved in \cite[Theorem III.5.1]{FK}.

\begin{proposition}\label{p:order_isomorphism}
A map $T \colon A \to B$ is an order isomorphism if and only if $T$ is of the form $T = U_b J$, where $b \in \CB$ and $J$ is a Jordan isomorphism. Moreover, this decomposition is unique and $b=(Te)^{\frac{1}{2}}$.
\end{proposition}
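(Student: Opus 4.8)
The plan is to treat the two implications separately and then pin down uniqueness by evaluating at the unit. For the \emph{if} direction, suppose $T = U_b J$ with $b \in \CB$ and $J$ a Jordan isomorphism. Since $b \in \CB \subseteq \Inv(B)$, the quadratic representation $U_b$ is an order isomorphism, as recorded in the preliminaries (its inverse is $U_{b^{-1}}$, and both send the cone into itself by the identities in \eqref{e:U_facts}). A Jordan isomorphism preserves the spectrum and hence the cone, so $J$ is an order isomorphism as well; this is precisely the unital case of Corollary~\ref{orderisoms}. A composition of order isomorphisms is again one, so $T$ is an order isomorphism.

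For the \emph{only if} direction, I would construct the factorization explicitly. Because $T$ is an order isomorphism and $e \in \CA$, the image $Te$ is an interior point of the cone, so $b := (Te)^{\frac{1}{2}} \in \CB$ is well defined and $U_b$ is an order isomorphism with inverse $U_{b^{-1}}$. Set $J := U_{b^{-1}} T$. As a composition of order isomorphisms $J$ is an order isomorphism, and it is unital: using $U_b e = b^2 = Te$ together with $U_{b^{-1}} U_b = U_b^{-1} U_b = \mathrm{id}$ gives $Je = U_{b^{-1}} Te = U_{b^{-1}} U_b e = e$. Thus $J$ is a unital order isomorphism between JB-algebras, hence a Jordan isomorphism by Corollary~\ref{orderisoms}. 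Finally $U_b J = U_b U_{b^{-1}} T = T$, which is the desired decomposition with $b = (Te)^{\frac{1}{2}}$.

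For uniqueness, suppose $T = U_b J = U_{b'} J'$ with $b, b' \in \CB$ and $J, J'$ Jordan isomorphisms. Evaluating at $e$ and using that Jordan isomorphisms are unital yields $b^2 = U_b e = Te = U_{b'} e = (b')^2$. Since positive square roots in $\CB$ are unique, $b = b' = (Te)^{\frac{1}{2}}$, whence $U_b = U_{b'}$ is invertible and $J = U_{b^{-1}} T = J'$.

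The substantive input is Corollary~\ref{orderisoms}: the passage from a unital order isomorphism to a Jordan isomorphism is exactly where the Wright--Young and Isidro--Rodr\'iguez-Palacios machinery enters, and it is the only nonelementary step. Everything else is bookkeeping with the identities in \eqref{e:U_facts}, chiefly $U_b^{-1} = U_{b^{-1}}$ and $U_b e = b^2$, together with the standard facts that $U_b$ is an order isomorphism for invertible $b$ and that positive roots are unique. I expect no genuine obstacle beyond taking care that $Te$ is an interior point (so that its square root lies again in $\CB$) and verifying that $J$ is honestly unital before invoking the corollary.
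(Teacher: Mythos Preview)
Your proof is correct and follows essentially the same approach as the paper: both factor $T$ as $U_{(Te)^{1/2}}\bigl(U_{(Te)^{-1/2}}T\bigr)$ and invoke Corollary~\ref{orderisoms} to identify the second factor as a Jordan isomorphism, then obtain uniqueness by evaluating at $e$ and using uniqueness of positive square roots. Your write-up is simply more explicit about the bookkeeping (checking $Te\in\CB$, verifying $Je=e$, spelling out the identities from~\eqref{e:U_facts}) than the paper's terse version.
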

\begin{proof}
If $T$ is of the above form, then $T$ is an order isomorphism as a composition of two order isomorphisms. Conversely, if $T$ is an order isomorphism, then $T = U_{(Te)^{\frac{1}{2}}} U_{(Te)^{-\frac{1}{2}}} T$, and by the above corollary $U_{(Te)^{-\frac{1}{2}}} T$ is a Jordan isomorphism.

For the uniqueness, if $T = U_b J$, then $Te = U_b Je = U_b e = b^2$ which forces $b = (Te)^{\frac{1}{2}}$. This implies that $J = U_{(Te)^{-\frac{1}{2}}} T$, so $J$ is also unique.
\end{proof}

\subsection{Hilbert's and Thompson's metrics on cones in JB-algebras}

Suppose $A$ is a JB-algebra. For $c \in \CA$, the map $U_c$ is an order isomorphism of $A$, and hence it  preserves $M(a/b)$. Thus, $U_c$ is an isometry under $d_H$ and $d_T$. This can be used to derive the following expressions for $d_H$ and $d_T$ on cones in JB-algebras.
\begin{proposition}\label{p:hilbert_thompson_metrics}
If $A$ is a JB-algebra and $a,b\in \CA$, then
\[
d_H(\ol{a},\ol{b}) = \norm{\log U_{b^{-\frac{1}{2}}}a}_v \mbox{\quad and\quad }
 d_T(a,b) =  \norm{ \log U_{b^{-\frac{1}{2}}}a} .
\]
\end{proposition}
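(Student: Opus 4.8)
The plan is to exploit the remark immediately preceding the statement. Since $b\in\CA$, the element $b^{-\frac{1}{2}}$ also lies in $\CA$, so $U_{b^{-\frac{1}{2}}}$ is an order isomorphism of $A$ and therefore preserves $M(\cdot/\cdot)$, and hence both $d_H$ and $d_T$. Applying it to the pair $(a,b)$ and using the identity $U_{a^\lambda}a^\mu=a^{2\lambda+\mu}$ from \eqref{e:U_facts} with $\lambda=-\frac{1}{2}$ and $\mu=1$, we obtain $U_{b^{-\frac{1}{2}}}b=e$. Writing $c:=U_{b^{-\frac{1}{2}}}a\in\CA$, this reduces both identities to the special case $b=e$: since $U_{b^{-\frac{1}{2}}}$ maps $\ol{a}\mapsto\ol{c}$ and $\ol{b}\mapsto\ol{e}$, it suffices to prove $d_H(\ol{c},\ol{e})=\norm{\log c}_v$ and $d_T(c,e)=\norm{\log c}$.

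Next I would compute $M(c/e)$ and $M(e/c)$ spectrally, using the continuous functional calculus on $\mathrm{JB}(c,e)$ together with the fact that positivity of an element is equivalent to nonnegativity of its spectrum. The inequality $c\le\beta e$ holds if and only if $\beta\ge\max\sigma(c)$, giving $M(c/e)=\max\sigma(c)$; similarly $e\le\beta c$ holds if and only if $\beta\lambda-1\ge0$ for every $\lambda\in\sigma(c)$, and since $c\in\CA$ has $\min\sigma(c)>0$ (attained, as $\sigma(c)$ is compact), this yields $M(e/c)=1/\min\sigma(c)$.

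Finally I would transport these quantities through the logarithm. The functional calculus gives $\sigma(\log c)=\{\log\lambda:\lambda\in\sigma(c)\}$, so that $\max\sigma(\log c)=\log\max\sigma(c)$ and $\min\sigma(\log c)=\log\min\sigma(c)$. For Hilbert's metric this is immediate:
\[
\norm{\log c}_v=\max\sigma(\log c)-\min\sigma(\log c)=\log M(c/e)+\log M(e/c)=d_H(\ol{c},\ol{e}).
\]
For Thompson's metric I would use the identity $\norm{x}=\max|\sigma(x)|=\max\{\max\sigma(x),-\min\sigma(x)\}$, valid for the order unit norm on any JB-algebra; applied to $x=\log c$ it gives
\[
\norm{\log c}=\max\{\log\max\sigma(c),-\log\min\sigma(c)\}=\log\max\{M(c/e),M(e/c)\}=d_T(c,e),
\]
so that no case analysis is required.

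None of the steps presents a serious obstacle: the argument rests entirely on the reduction identity $U_{b^{-\frac{1}{2}}}b=e$ and on the spectral description of the order, both of which follow from the functional calculus and the characterization of the order unit norm recorded in the preliminaries. The only point deserving a word of care is the Thompson computation, where one should note that $M(c/e)M(e/c)=\max\sigma(c)/\min\sigma(c)\ge1$, so that $\max\{M(c/e),M(e/c)\}\ge1$ and the formula is consistent with $d_T\ge0$; invoking the form $\norm{x}=\max\{\max\sigma(x),-\min\sigma(x)\}$ makes even this bookkeeping unnecessary.
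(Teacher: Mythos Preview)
Your proposal is correct and follows essentially the same approach as the paper: both use that $U_{b^{-1/2}}$ is an order isomorphism to reduce to computing $M(c/e)$ and $M(e/c)$ spectrally for $c=U_{b^{-1/2}}a$, then apply the spectral mapping for $\log$ and the identity $\norm{x}=\max\{\max\sigma(x),-\min\sigma(x)\}$. The only cosmetic difference is that the paper carries the computation out inline (transforming the infima directly), whereas you first state the reduction to $b=e$ explicitly; the content is identical.
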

\begin{proof}
Since $U_c$ is an order isomorphism of $A$ for $c \in \CA$,
\[
  \inf \{ \lambda > 0\colon a\leq \lambda b \}
=  \inf \{ \lambda > 0\colon  U_{b^{-\frac{1}{2}}}a\leq \lambda e \}
= \max \sigma(U_{b^{-\frac{1}{2}}}a),
\]
and hence $\log M(a/b) = \log \max \sigma(U_{b^{-\frac{1}{2}}}a) =\max \sigma( \log  U_{b^{-\frac{1}{2}}}a )$.

Similarly,
\[
  \inf \{ \lambda > 0\colon b\leq \lambda a  \}
 =  (\sup \{ \mu > 0 \colon  \mu b \leq a \})^{-1}
 =(\sup \{ \mu > 0 \colon \mu e\leq  U_{b^{-\frac{1}{2}}}a \})^{-1}
= (\min \sigma(U_{b^{-\frac{1}{2}}}a))^{-1}
\]
gives $\log M(b/a) = \log (\min \sigma(U_{b^{-\frac{1}{2}}}a))^{-1} = - \min \sigma( \log U_{b^{-\frac{1}{2}}}a)$.

The formula for $d_H$ follows immediately. As $\norm{c} = \max\{ \max \sigma(c), - \min \sigma(c)\}$ for $c \in A$,
the identity for $d_T$ holds.
\end{proof}
Also note that the inverse map on $\CA$ satisfies $M(b^{-1}/a^{-1}) = M(a/b)$, so this is an isometry for both metrics as well. Indeed, using (\ref{e:U_facts}) we see that
\begin{eqnarray*}
M(b^{-1}/a^{-1}) & = & \inf\{ \lambda >0 \colon b^{-1}\leq \lambda a^{-1}\}\\
 & = & \inf\{ \lambda >0 \colon e\leq \lambda U_{b^{\frac{1}{2}}}a^{-1}\}\\
 & = & \inf\{ \lambda >0 \colon e\leq \lambda (U_{b^{-\frac{1}{2}}}a)^{-1}\}\\
 & = & \inf\{ \lambda >0 \colon  U_{(U_{b^{-\frac{1}{2}}}a)^{\frac{1}{2}}}e\leq \lambda e\}\\
 & = & \inf\{ \lambda >0 \colon  U_{b^{-\frac{1}{2}}}a\leq \lambda e\}\\
 &  = &   M(a/b).
\end{eqnarray*}

Given a JB-algebra $A$ we follow Bosch\'e \cite[Proposition 2.6]{Bo} and Hatori and Moln\'ar \cite[Theorem 9]{HM}, and introduce for $n \geq 1$ metrics  on $[A]$ and $A$, respectively, by
\[
 d_n^H([a],[b]) := n d_H(\exp([a]/n), \exp([b]/n)) \mbox{\quad and\quad }
 d_n^T(a,b) := n d_T(\exp(a/n), \exp(b/n))
  \]
 for all $a,b\in A$.
Note that $d_n^H$ is well defined, because if $a_1,a_2\in [a]$, then $\exp(a_1/n) =\lambda\exp(a_2/n)$ for some $\lambda>0$.
\begin{proposition}\label{p:d_n_converges_to_norm}
If $A$ is a JB-algebra and  $a,b \in A$, then
\[ \lim_{n\to\infty} d_n^H([a],[b]) = \norm{[a]-[b]}_v \mbox{\quad and\quad }
\lim_{n\to\infty}d_n^T(a,b) = \norm{a-b}. \]
\end{proposition}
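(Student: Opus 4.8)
The plan is to reduce both limits to a single first-order expansion. First I would rewrite the rescaled metrics using Proposition~\ref{p:hilbert_thompson_metrics}. Since $\exp([a]/n)=\ol{\exp(a/n)}$ and $\exp(b/n)^{-\frac12}=\exp(-b/(2n))$, that proposition gives
\[
d_n^T(a,b)=n\norm{\log U_{\exp(-b/(2n))}\exp(a/n)}\quad\text{and}\quad d_n^H([a],[b])=n\norm{\log U_{\exp(-b/(2n))}\exp(a/n)}_v.
\]
By homogeneity of $\norm{\cdot}$ and of the seminorm $\norm{\cdot}_v$ I would absorb the factor $n$, so that both right-hand sides equal the (semi)norm of the single element $x_n:=n\log U_{\exp(-b/(2n))}\exp(a/n)$. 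As $\norm{\cdot}$ and $\norm{\cdot}_v$ are continuous on $A$ and $\norm{a-b}_v=\norm{[a]-[b]}_v$, everything reduces to showing that $x_n\to a-b$ in the JB-norm.

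To establish this I would pass to an associative setting. By the Shirshov--Cohn theorem the unital JB-subalgebra generated by $a$ and $b$ is a JC-algebra, so I may view $a,b$ as selfadjoint operators and replace the quadratic representation by the triple product $U_cd=cdc$; there the Jordan exponential and logarithm agree with the ordinary operator functions. Writing $t=1/n$, the element inside the logarithm is $g(t):=\exp(-tb/2)\exp(ta)\exp(-tb/2)$, an analytic curve with $g(0)=e$, so for small $t$ it lies in a neighbourhood of $e$ on which $\log$ is analytic, and $h(t):=\log g(t)$ satisfies $h(0)=0$ and $x_n=n\,h(1/n)=h(t)/t$. Differentiating $g$ by the product rule gives $g'(0)=a-\tfrac12 b-\tfrac12 b=a-b$, and since the derivative of $\log$ at $e$ is the identity, $h'(0)=a-b$; hence $x_n=h(t)/t\to a-b$ in norm.

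The step I expect to be the crux is precisely this last limit, and the reduction to a JC-algebra is what makes it routine: in the associative picture the expansion $g(t)=e+t(a-b)+O(t^2)$, and therefore $\log g(t)=t(a-b)+O(t^2)$, is a direct power-series computation, whereas carrying it out purely with Jordan identities and the functional calculus would be awkward. A minor point worth recording is that norm convergence inside the finitely generated JC-subalgebra transfers to the JB-norm of $A$, since that subalgebra is a norm-closed Jordan subalgebra carrying the restriction of the JB-norm.
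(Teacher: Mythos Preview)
Your proposal is correct and follows essentially the same route as the paper: both arguments invoke Shirshov--Cohn to work in a JC-algebra, expand $\exp(-tb/2)\exp(ta)\exp(-tb/2)=e+t(a-b)+O(t^2)$, and then apply the logarithm to obtain $t(a-b)+O(t^2)$, from which the two limits follow. The only cosmetic difference is that you phrase the first-order expansion via the derivative $h'(0)=a-b$ of the analytic curve $h(t)=\log g(t)$, whereas the paper writes out the $o(1/n)$ remainders explicitly and bounds $|d_n^H-\norm{\cdot}_v|$ directly.
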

\begin{proof}
We start with some preparations. The JB-algebra generated by $a$, $b$ and $e$ is special, so we can think of $U_{\exp(b/n)^{-\frac{1}{2}}} \exp(a/n)$ as $\exp(-b/2n) \exp(a/n) \exp(-b/2n)$ for some $C^*$-algebra multiplication. Writing out the exponentials in power series yields
\[
U_{\exp(b/n)^{-\frac{1}{2}}} \exp(a/n) = e + (a-b)/n + o(1/n).
\]
Furthermore, using the power series representation,
\[ \log(e+c) = \sum_{k=1}^\infty \frac{(-1)^{k+1} c^k}{k}, \]
which is valid for $\norm{c} < 1$, we obtain for sufficiently large $n$ that
\[ \log\left(U_{\exp(b/n)^{-\frac{1}{2}}} \exp(a/n)\right) = (a-b)/n + o(1/n).\]

So, for all sufficiently large $n$ we have by Proposition \ref{p:hilbert_thompson_metrics} that
\begin{eqnarray*}
\left| d^H_n([a],[b]) -\|[a]-[b]\|_v\right|& =& \left| n d_H(\exp(a/n), \exp(b/n)) -\|a-b\|_v\right|\\
 & = & \left| n \norm{ \log\left( U_{\exp(b/n)^{-\frac{1}{2}}} \exp(a/n)\right)}_v-\|a-b\|_v\right|\\
 & = & \left| \| a-b +no(1/n)\|_v -\|a-b\|_v\right|\\
 & \leq &  n \|o(1/n)\|_v\\
 & \leq & 2n\|o(1/n)\|.
\end{eqnarray*}
As the right hand side converges to 0 for $n\to\infty$, the first limit holds. The second limit can be derived in the same way.
\end{proof}

We will also need  some basic facts concerning the unique geodesics for $d_T$ and $d_H$. Recall that for a metric space $(M,d)$ a map $\gamma\colon I \to M$, where $I$ is a possibly unbounded interval in $\mathbb{R}$, is a {\em geodesic path} if there is a $k\geq 0$ such that $d(\gamma(s),\gamma(t)) = k|s-t|$ for all $s,t \in I$. The image of a geodesic path is called a {\em geodesic}. The following result generalizes \cite[Theorems 5.1 and 6.2]{LR}.
\begin{theorem}\label{Thom_unique}
If $A$ is a JB-algebra and $a,b \in A_+^\circ$ are linearly independent, then there exists a unique Thompson geodesic between $a$ and $b$ if and only
if $\sigma(U_{a^{-\frac{1}{2}}}b) = \{ \beta^{-1}, \beta \}$ for some $\beta > 1$.
\end{theorem}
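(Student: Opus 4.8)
The plan is to reduce everything to the pair $(e,c)$ and then to convert the uniqueness of geodesics into the uniqueness of a single metric midpoint, which can be read off from an intersection of order intervals. Since $U_{a^{-\frac12}}$ is an order isomorphism, it is a $d_T$-isometry (Proposition~\ref{p:hilbert_thompson_metrics}) carrying $a$ to $e$ and $b$ to $c:=U_{a^{-\frac12}}b$, so geodesics from $a$ to $b$ correspond bijectively to geodesics from $e$ to $c$, the hypothesis reads $\sigma(c)=\{\beta^{-1},\beta\}$, and $a,b$ are linearly independent iff $c\notin\R e$. So it suffices to prove, for $c\in A_+^\circ\setminus\R e$, that there is a unique Thompson geodesic from $e$ to $c$ iff $\sigma(c)=\{\beta^{-1},\beta\}$ with $\beta>1$. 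Put $R:=d_T(e,c)=\norm{\log c}>0$ and $\beta_0:=e^R$. From the definitions of $d_T$ and $M$, and the triangle inequality, a point $m$ is a \emph{metric midpoint} of $e$ and $c$ exactly when it satisfies the four order bounds
\[
\beta_0^{-\frac12}e\le m\le\beta_0^{\frac12}e\quad\text{and}\quad \beta_0^{-\frac12}c\le m\le\beta_0^{\frac12}c ,
\]
these forcing both $d_T(e,m)$ and $d_T(m,c)$ to equal $R/2$.

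Next I would record that $(A_+^\circ,d_T)$ is a complete geodesic space: the curve $t\mapsto c^t$ is a geodesic from $e$ to $c$, since $e,c^t,c$ operator commute and on the associative subalgebra $\mathrm{JB}(c,e)\cong C(\sigma(c))$ the metric $d_T$ is the supremum distance of the logarithms. Two distinct midpoints $m_1\ne m_2$ then yield two distinct geodesics, obtained by concatenating a geodesic from $e$ to $m_i$ with one from $m_i$ to $c$; conversely, the parameter-midpoint of any geodesic from $e$ to $c$ is a metric midpoint, so a unique geodesic forces a unique midpoint. This reduces the theorem to showing that the midpoint set above is a singleton iff $\sigma(c)=\{\beta^{-1},\beta\}$, plus a bootstrap for the forward implication.

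For the \emph{if} direction, suppose $\sigma(c)=\{\beta_0^{-1},\beta_0\}$. As these are isolated spectral points the spectral projection $p:=\mathbf 1_{\{\beta_0\}}(c)$ exists by the continuous functional calculus, $c=\beta_0 p+\beta_0^{-1}p^\perp$, and $0\ne p\ne e$. Writing the four order bounds in terms of $p$ and using the corner identities $U_p p^\perp=0$ and ``$0\le w\le\mu p^\perp$ implies $p\circ w=0$ and $w=U_{p^\perp}w$'' (both verified in the special algebra generated by the elements via Shirshov--Cohn), one applies $U_{p^\perp}$ to squeeze any midpoint $m$ down to $m=\beta_0^{\frac12}p+\beta_0^{-\frac12}p^\perp=c^{\frac12}$. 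Since every $c^t$ again has the two-point spectrum $\{\beta_0^{-t},\beta_0^{t}\}$, the same computation gives a unique midpoint for each pair $(c^s,c^t)$; hence on any geodesic from $e$ to $c$ all dyadic points are forced to equal $c^{j2^{-k}}$, and by continuity and completeness the geodesic coincides with $t\mapsto c^t$.

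For the \emph{only if} direction, if $\sigma(c)\ne\{\beta_0^{-1},\beta_0\}$ then, as $\sigma(c)\subseteq[\beta_0^{-1},\beta_0]$, the spectrum of $x:=\log c$ meets the open interval $(-R,R)$; fix $\lambda\in\sigma(x)\cap(-R,R)$. Inside $\mathrm{JB}(c,e)\cong C(\sigma(c))$ a commuting midpoint is just a continuous function $\eta$ on $\sigma(x)$ with $|\eta(\xi)|\le R/2$ and $|\eta(\xi)-\xi|\le R/2$; at $\xi=\lambda$ these leave an admissible interval of positive length into which $\eta(\xi)=\xi/2$ falls strictly. Choosing a continuous bump $\phi\ge0$ with $\phi(\lambda)>0$, small enough to keep $\xi/2+\phi$ admissible and not proportional to $\xi$, yields a second midpoint $m_2=\exp\!\big((\tfrac12\,\mathrm{id}+\phi)(x)\big)\ne c^{\frac12}$ lying off the curve $t\mapsto c^t$, hence two distinct geodesics. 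The hard part will be the \emph{if} direction: isolating exactly the corner identities that make the four order inequalities pin down the midpoint in a general JB-algebra, where only the two extreme spectral projections are available, and then upgrading uniqueness of the one midpoint to uniqueness of the whole geodesic through the dyadic bootstrap, which hinges on each $c^t$ retaining a two-point spectrum.
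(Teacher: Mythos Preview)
Your argument is correct but takes a genuinely different route from the paper, particularly in the ``if'' direction. The paper invokes \cite[Theorem~4.3]{LR}, a general uniqueness criterion phrased in terms of the two boundary points where the line through $e$ and $b$ exits $A_+$: writing $b=\beta^{-1}p+\beta p^\perp$, that line hits $\partial A_+$ at multiples of $p$ and $p^\perp$, and the paper verifies the criterion by showing, via the Peirce decomposition, that $\lambda p+tv\in A_+$ for all small $t$ forces $v\in A_1(p)$ (and symmetrically $v\in A_0(p)$), whence $v=0$. Your approach instead characterises metric midpoints as the intersection of two order intervals, uses the corner identity ``$0\le w\le\mu p^\perp\Rightarrow w=U_{p^\perp}w$'' together with its $p$-counterpart and the Peirce direct sum to pin the midpoint down to $c^{1/2}$, and then bootstraps through dyadic times using that every $c^t$ again has two-point symmetric spectrum. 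Both arguments rest on the same Peirce machinery, but yours is self-contained and avoids the external citation, at the cost of the extra bootstrap step (which is straightforward since geodesic paths are continuous and dyadic rationals are dense). For the ``only if'' direction you and the paper do essentially the same thing: pass to $\mathrm{JB}(c,e)\cong C(\sigma(c))$, where $d_T$ becomes the sup-norm on logarithms, and exhibit two distinct geodesics (the paper via $t\mapsto tf$ versus $t\mapsto (t\|f\|\wedge|f|)\,\mathrm{sgn}\,f$, you via two distinct midpoints and concatenation). One small point to tighten in your write-up: the squeeze actually needs both $U_p$ and $U_{p^\perp}$, giving $m-c^{1/2}\in A_1(p)\cap A_0(p)=\{0\}$; your phrase ``applies $U_{p^\perp}$'' undersells this, though the idea is clearly present.
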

\begin{proof}
As the map $U_{a^{-\frac{1}{2}}}$ is  a Thompson's metric isometry,  we may assume without loss of generality that $a = e$. First suppose that $\sigma(b) = \{\beta^{-1}, \beta \}$ for some $\beta > 1$, then $b = \beta^{-1}p + \beta p^\perp$ and the line through $b$ and $e$ intersects $\partial A_+$ in $\lambda p$ and $\mu p^\perp$ for some $\lambda,\mu>0$. We wish to apply \cite[Theorem~4.3]{LR}.

Consider the Peirce decomposition $ A = A_1 \oplus A_{1/2} \oplus A_0$ (cf. \cite[2.6.2]{HO}) with respect to $p$. We denote the projection onto $A_i$ by $P_i$, for $i=1, 1/2, 0$. Then $P_1 = U_p$ and $P_0 = U_{p^\perp}$. From \cite[Proposition~1.3.8]{AS} we know that if $a \in A_+$, then $U_p a = a$ if and only if $U_{p^\perp} a = 0$. Using this result we now prove the following claim.

{\em Claim.} Let $v \in A$. If $\alpha,\delta > 0$ and $p \in A$ is a projection such that $\alpha p + tv \in A_+$ for all $|t| < \delta$, then $v \in A_1$.

To show the claim, note that  $0 \leq U_{p^\perp}(\alpha p+tv) = t U_{p^\perp} v$ for all $|t| < \delta$, so that $U_{p^\perp} v = 0$, and consequently $U_{p^\perp}(\alpha p+tv) = t U_{p^\perp} v = 0$ for all $|t| <
\delta$. Let $0 <|t| < \delta$ be arbitrary. It follows that $\alpha p + tv = U_p( \alpha p + tv) = \alpha p + t U_p v$ and so $v = U_p v = P_1
v$, i.e., $v \in A_1$.

By applying the claim to $\lambda p$ as well as $\mu p^\perp$, it follows that if $v \in A$ is such that $\lambda p + tv \in A_+$ and $\mu p^\perp + tv \in A_+$ for all $|t| < \delta$, then $v \in A_1 \cap A_0 = \{0\}$. Hence, by \cite[Theorem 4.3]{LR}, there is a unique geodesic between $b$ and $e$.

Conversely, suppose that there is a unique geodesic between $b$ and $e$. Then this is also a unique geodesic in JB$(b,e) \cong C(\sigma(b))$. For $f,g \in C(\sigma(b))$ we have by Proposition \ref{p:hilbert_thompson_metrics} that
\[
 d_T(f,g) = \norm{ \log U_{g^{-\frac{1}{2}}} f} = \sup_{k \in \sigma(b)} \left|\log \frac{f(k)}{g(k)} \right| = \sup_{k \in \sigma(b)} |\log f(k) - \log g(k)| = \norm{\log f - \log g}.
 \]
So, the pointwise logarithm is an isometry from
$ (C(\sigma(b)^\circ_+), d_T)$ onto  $(C(\sigma(b)), \norm{\cdot}_\infty)$,
which sends $e$ to the zero function and $b$ to the function $k \mapsto \log k$.

Note that for $f\in C(\sigma(b))$ the images of both $t\mapsto(t\|f\|\wedge|f|)\mathrm{sgn}f$ and $t\mapsto t f$ are geodesics connecting $0$ and $f$, which are different if and only if there is a point $k\in \sigma(b)$ such that $|f(k)|\not=\|f\|$. Hence $k \mapsto |\log(k)|$ is constant. So, if $\alpha,\beta \in \sigma(b)$, then  $|\log \beta| = |\log \alpha|$, and hence $\alpha=\beta$ or $\alpha =\beta^{-1}$. This shows that $\sigma(b) \subseteq \{ \beta^{-1}, \beta \}$, and since $b$ and $e$ are linearly independent we must have equality.
\end{proof}

From Theorem \ref{Thom_unique} we can derive in the same way as in  \cite[Theorem~5.2]{LR} the following characterization for Hilbert's metric.
\begin{theorem}\label{t:unique_hilbert_geodesics}
If $A$ is a JB-algebra and $a,b \in A_+^\circ$ are linearly independent, then there exists a unique geodesic between $\ol{a}$ and $\ol{b}$  in $(\ol{A}_+^\circ, d_H)$ if and only
if $\sigma(U_{a^{-\frac{1}{2}}}b) = \{ \alpha, \beta \}$ for some $\beta > \alpha > 0$.
\end{theorem}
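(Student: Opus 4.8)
The plan is to reduce the entire statement to the Thompson case of Theorem~\ref{Thom_unique} by exploiting the scale invariance of Hilbert's metric. Since $U_{a^{-\frac{1}{2}}}$ is an order isomorphism, it preserves $M(\cdot/\cdot)$ and hence is an isometry for both $d_H$ and $d_T$ carrying geodesics to geodesics; as it sends $\ol{a}$ to $\ol{e}$ and $\ol{b}$ to $\ol{U_{a^{-\frac{1}{2}}}b}$, I may assume $a=e$ and prove that there is a unique Hilbert geodesic between $\ol{e}$ and $\ol{b}$ if and only if $\sigma(b)=\{\alpha,\beta\}$ with $\beta>\alpha>0$. The first point is that $d_H$ only sees the ray $\ol{b}$, so I am free to rescale: writing $\sigma(b)=\{\alpha,\beta\}$, the representative $b':=(\alpha\beta)^{-\frac{1}{2}}b$ satisfies $\ol{b'}=\ol{b}$ and $\sigma(b')=\{\gamma^{-1},\gamma\}$ with $\gamma:=\sqrt{\beta/\alpha}>1$. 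More systematically, from Proposition~\ref{p:hilbert_thompson_metrics} one gets $d_T(e,\lambda b)=\max\{|\log(\lambda\mu)|\colon \mu\in\sigma(b)\}$, and minimizing over $\lambda>0$ yields the identity $d_H(\ol{e},\ol{b})=2\min_{\lambda>0}d_T(e,\lambda b)$, with the minimum attained exactly when the extreme eigenvalues of $\lambda b$ are reciprocal. Thus $\sigma(b)$ consists of two points precisely when the optimally scaled representative $b'$ has reciprocal spectrum $\{\gamma^{-1},\gamma\}$, which by Theorem~\ref{Thom_unique} is exactly the condition for a unique Thompson geodesic between $e$ and $b'$.

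It therefore suffices to match Hilbert geodesics between $\ol{e}$ and $\ol{b}$ with Thompson geodesics between $e$ and the balanced representative $b'$, just as in \cite[Theorem~5.2]{LR}. In one direction, a Thompson geodesic from $e$ to $b'$ projects to a path of rays, and using $d_H\le 2d_T$ together with $d_T(e,b')=\frac{1}{2}d_H(\ol{e},\ol{b'})$ a routine triangle-inequality argument forces this projection to be a Hilbert geodesic from $\ol{e}$ to $\ol{b'}=\ol{b}$. Conversely, given a Hilbert geodesic $t\mapsto\ol{\gamma(t)}$, I lift it by choosing for each $t$ the representative $\gamma(t)$ that is optimally scaled relative to $e$, i.e.\ realizing $d_T(e,\gamma(t))=\frac{1}{2}d_H(\ol{e},\ol{\gamma(t)})$; the endpoints of this lift are $e$ and $b'$ because $b'$ is already balanced, and the balanced spectrum is what forces the lift to be a Thompson geodesic. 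Since Theorem~\ref{Thom_unique} gives a unique such Thompson geodesic when $\sigma(b')=\{\gamma^{-1},\gamma\}$, the lift, and hence the Hilbert geodesic, is unique, proving the ``if'' direction; running the correspondence backwards proves the ``only if'' direction, since a unique Hilbert geodesic forces a unique Thompson geodesic between $e$ and $b'$, whence $\sigma(b')=\{\gamma^{-1},\gamma\}$ and, rescaling, $\sigma(b)=\{\alpha,\beta\}$.

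The ``only if'' direction also admits a direct and concrete check that I would mention: since $\mathrm{JB}(b,e)\cong C(\sigma(b))$ is isometrically embedded in $(\ol{A}_+^\circ,d_H)$ and the logarithm identifies it with $([C(\sigma(b))],\|\cdot\|_v)$, a unique geodesic in $A$ forces a unique variation-norm geodesic between $[0]$ and the class of $k\mapsto\log k$; but if $\sigma(b)$ contained a third value one produces distinct geodesics by bending the value of the path over that point (keeping it monotone and suitably Lipschitz relative to the extreme points), so $\sigma(b)$ has at most two points, and linear independence of $a,b$ excludes a single point. The step I expect to be the main obstacle is establishing the geodesic correspondence in the full, non-associative JB-algebra: unlike the commutative reduction available for the ``only if'' direction, the ``if'' direction must control geodesics that need not lie in $\mathrm{JB}(b,e)$, and the crux is to verify that the optimal-scaling lift of a Hilbert geodesic is genuinely a Thompson geodesic with the correct fixed endpoints $e$ and $b'$ — exactly the place where the balanced spectrum $\{\gamma^{-1},\gamma\}$ and the identity $d_H=2\min_{\lambda>0}d_T$ must be combined, as in \cite{LR}.
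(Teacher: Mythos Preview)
Your proposal is correct and follows essentially the same route as the paper: the paper gives no detailed proof and simply says the result is derived from Theorem~\ref{Thom_unique} ``in the same way as in \cite[Theorem~5.2]{LR}'', and your sketch---reduce to $a=e$, pass to the balanced representative $b'=(\alpha\beta)^{-1/2}b$ with reciprocal spectrum, and transfer between Hilbert geodesics for $(\ol{e},\ol{b})$ and Thompson geodesics for $(e,b')$ via the identity $d_H(\ol{e},\ol{b})=2\min_{\lambda>0}d_T(e,\lambda b)$---is precisely the method of \cite[Theorem~5.2]{LR}. Your honest flagging of the lifting step (that the optimally scaled lift of a Hilbert geodesic is genuinely a Thompson geodesic) as the crux is apt; that is exactly the content carried over from \cite{LR}, and nothing more is needed here.
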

Recall that the straight line segment $\{\ol{(1-t)a+tb}\colon 0\leq t\leq 1\}$ is a geodesic in $(\ol{A}_+^\circ,d_H)$ for all $a,b\in A_+^\circ$.

The following special geodesic paths play an important role.
\begin{definition}
For $a,b \in \CA$, define the path $\gamma_a^b \colon [0,1] \to \CA$ by
\[ \gamma_a^b(t) := U_{a^{\frac{1}{2}}} \left(U_{a^{-\frac{1}{2}}}b \right)^t. \]
\end{definition}
Note that $\gamma_a^b(0) = U_{a^{\frac{1}{2}}}e = a$ and $\gamma_a^b(1) = U_{a^{\frac{1}{2}}} U_{a^{-\frac{1}{2}}}b = b$.
Also note that for $\lambda,\mu >0$ and $a,b\in \CA$,
\[
\ol{\gamma_{\lambda a}^{\mu b}(t)} = \ol{\gamma_{a}^{b}(t)}\mbox{\quad for all }t\in [0,1].
\]
Thus, we can define for $\ol{a},\ol{b}\in \ol{A}_+^\circ$ a path in $\ol{A}_+^\circ$ by $\gamma_{\ol{a}}^{\ol{b}}(t) := \ol{\gamma_a^b(t)}$ for all $t\in[0,1]$.

We will verify that $\gamma_a^b$ is a geodesic path connecting $a$ and $b$ in $(\CA,d_T)$. The argument to show that $\gamma_{\ol{a}}^{\ol{b}}$ is a geodesic in $(\ol{A}_+^\circ, d_H)$ is similar and is left to the reader.
Using the fact that $U_{c^\lambda} c^\mu = c^{2 \lambda + \mu}$ in the fourth step, we get that
\begin{eqnarray*}
d_T(\gamma_a^b(s), \gamma_a^b(t)) &= & d_T\left(U_{a^{\frac{1}{2}}} \left(U_{a^{-\frac{1}{2}}}b \right)^s, U_{a^{\frac{1}{2}}} \left(U_{a^{-\frac{1}{2}}}b \right)^t \right) \\
&= &d_T\left(\left(U_{a^{-\frac{1}{2}}}b \right)^s,  \left(U_{a^{-\frac{1}{2}}}b \right)^t \right) \\
&= &\norm{ \log U_{(U_{a^{-\frac{1}{2}}}b)^{-\frac{t}{2}}} (U_{a^{-\frac{1}{2}}}b)^s}\\
&= &\norm{ \log (U_{a^{-\frac{1}{2}}}b)^{s-t}} \\
&= & |s-t| \norm{ \log U_{a^{-\frac{1}{2}}}b} \\
&= & |s-t| d_T(a,b)
\end{eqnarray*}
for all $s,t\in[0,1]$.

\subsection{Geometric means in JB-algebras}
The cone $A_+^\circ$ in a JB-algebra is a symmetric space, see Lawson and Lim \cite{LL2} and Loos \cite{Loos}.
Indeed, for $c\in A_+^\circ$ one can define maps $S_c\colon A_+^\circ\to A_+^\circ$ by
\[
S_c(a) := U_c a^{-1}\mbox{\quad for }a\in A_+^\circ.
\]
Clearly $S_c(c) =c$, and $S_c^2(a) = U_c(U_ca^{-1})^{-1} = U_c(U_{c^{-1}}a) =a$ for all $a\in A_+^\circ$. Moreover, by the fifth equation in (\ref{e:U_facts}) we see that
\[
S_{S_c(b)}(S_c(a))= U_{U_cb^{-1}}(U_c a^{-1})^{-1} = U_cU_{b^{-1}}U_c(U_{c^{-1}}a) = U_c(U_ba^{-1})^{-1}) = S_c(S_b(a))
\]
for all $a\in A_+^\circ$. The map $S_c$ is called the {\em symmetry around $c$}, see \cite{Loos}.

The equation $S_c(a)=b$ has a unique solution in $A_+^\circ$, namely $\gamma_a^b(1/2)$. Indeed, using (\ref{e:U_facts}) and taking the unique positive square root in the third step, we obtain the following equivalent identities:
\begin{eqnarray*}
U_c a^{-1}= b & \Longleftrightarrow &
U_{a^{-\frac{1}{2}}} U_c a^{-1} = U_{a^{-\frac{1}{2}}} b \\
  & \Longleftrightarrow& ( U_{a^{-\frac{1}{2}}} c)^2 = U_{a^{-\frac{1}{2}}} b \\
 & \Longleftrightarrow& U_{a^{-\frac{1}{2}}} c = \left( U_{a^{-\frac{1}{2}}} b \right)^\frac{1}{2} \\
& \Longleftrightarrow& c =  U_{a^{\frac{1}{2}}} \left( U_{a^{-\frac{1}{2}}}b \right)^\frac{1}{2}.
\end{eqnarray*}

\begin{definition}
For  $a,b \in \CA$ the unique solution of the equation $S_c(a) = b$ is called the \emph{geometric mean} of $a$ and $b$. It is denoted by $a \# b$, so
\[ a \# b := U_{a^{\frac{1}{2}}} \left( U_{a^{-\frac{1}{2}}}b \right)^\frac{1}{2} \in \CA. \]
\end{definition}
We remark that the equation  $S_c(b)=U_c b^{-1} = a$, which has the unique solution $c=b \# a$, is equivalent to the equation $S_c(a)= U_ca^{-1} =b$. Thus, $a \# b = b \# a$, and hence $S_{a\# b}(a)=b$ and $S_{a\# b}(b)=a$.
Note  also that, as $S_c(a) =a$ implies that $c = a\#a =a$, the map $S_c$ has a unique fixed point $c$ in $A_+^\circ$. Moreover, $S_c$ is an isometry under both Hilbert's metric and Thompson's metric on $A_+^\circ$, since it is the composition of two isometries.

The idea is now to show that the geometric means are preserved under bijective Hilbert's metric and Thompson's metric isometries. The proof relies on properties of the  maps $S_{a\# b}$ and the following
lemma. This lemma and its proof are similar to \cite[lemma p.\,3852]{Mol1}, the only difference being that we consider two metric spaces here.
\begin{lemma}\label{l:metric_space}
Let $M,N$ be metric spaces. Suppose that for each $x,y \in M$ there exists an element $z_{xy} \in M$, a bijective isometry $\psi_{xy} \colon M \to M$ and a constant $k_{xy} > 1$ such that
\begin{enumerate}
 \item $\psi_{xy}(x) = y, \quad \psi_{xy}(y) = x$; \\
 \item $\psi_{xy}(z_{xy}) = z_{xy}$; \\
 \item $d(u, \psi_{xy}(u)) \geq k_{xy} d(u, z_{xy}) \mbox{\quad for all }  u \in M$.
\end{enumerate}
Suppose $N$ satisfies the same requirements. If $\phi \colon M \to N$ is a bijective isometry, then
\[ \phi(z_{xy}) = z_{\phi(x)\phi(y)}. \]
\end{lemma}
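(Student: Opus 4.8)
The plan is to transport the entire structure from $M$ to $N$ by conjugating with $\phi$, and thereby reduce the statement to the fact that the distinguished point attached to a pair is uniquely determined by that pair, independently of the triple realizing it.

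First I would set $\tilde\psi_{xy} := \phi \circ \psi_{xy}\circ \phi^{-1}$. Since $\phi$ and $\psi_{xy}$ are bijective isometries, so is $\tilde\psi_{xy}\colon N\to N$, and I would check that the triple $(\tilde\psi_{xy},\, \phi(z_{xy}),\, k_{xy})$ satisfies the three requirements for the pair $\phi(x),\phi(y)$ in $N$. Condition (i) holds because $\tilde\psi_{xy}(\phi(x)) = \phi(\psi_{xy}(x)) = \phi(y)$ and symmetrically $\tilde\psi_{xy}(\phi(y)) = \phi(x)$; condition (ii) holds because $\tilde\psi_{xy}(\phi(z_{xy})) = \phi(\psi_{xy}(z_{xy})) = \phi(z_{xy})$; and condition (iii) holds because for any $v \in N$, writing $v = \phi(u)$ and using that $\phi$ is an isometry, $d(v, \tilde\psi_{xy}(v)) = d(u, \psi_{xy}(u)) \ge k_{xy}\, d(u, z_{xy}) = k_{xy}\, d(v, \phi(z_{xy}))$. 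Consequently $\phi(z_{xy})$ is the fixed point of a triple satisfying (i)--(iii) for the pair $\phi(x),\phi(y)$, and by the hypotheses on $N$ so is $z_{\phi(x)\phi(y)}$.

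The whole statement therefore reduces to the claim that, within a single metric space, the distinguished fixed point attached to a pair does not depend on the realizing triple. To prove this I would first observe that (iii) pins down the fixed point of each reflection: if $\psi(u)=u$ then $0 = d(u,\psi(u)) \ge k\, d(u,z)$ with $k>1$, so $u=z$, and hence $z$ is the unique fixed point of its own $\psi$. Given two triples $(\psi,z,k)$ and $(\psi',z',k')$ for the same pair, I would play the strict lower bound in (iii) against the universal upper bound $d(u,\psi(u)) \le d(u,z) + d(z,\psi(u)) = 2\, d(u,z)$, which is valid because $\psi$ fixes $z$ and is an isometry. Evaluating cross-wise, e.g.\ the lower bound for $\psi'$ and the upper bound for $\psi'$ at $u=z$, gives $k'\, d(z,z') \le d(z,\psi'(z)) \le 2\, d(z,z')$, and symmetrically for $\psi$; the goal is to upgrade these relations to $d(z,z')=0$, so that $z=z'$ and in particular $\phi(z_{xy}) = z_{\phi(x)\phi(y)}$.

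I expect this last uniqueness of the distinguished point to be the genuinely delicate step: the conjugation of the first two paragraphs is routine, but extracting $z=z'$ must use the strict inequality $k>1$ in (iii) in an essential way, since the equidistance $d(x,z)=d(y,z)$ together with the bounds $\tfrac12 d(x,y) \le d(x,z) \le \tfrac1k d(x,y)$ do not by themselves single out $z$. The natural mechanism is to iterate the reflections — noting that $\psi$ preserves distances to its own fixed point $z$, so the orbit of $z'$ under $\psi$ stays on a fixed sphere about $z$ while the lower bound in (iii) keeps successive images far apart — and to combine this with the corresponding control coming from $\psi'$ until the strict constants force the two fixed points to coincide. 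This is precisely where the argument should mirror Molnár's original reasoning, the only bookkeeping difference being that here the reflection and its fixed point live in two different spaces $M$ and $N$ linked by the isometry $\phi$.
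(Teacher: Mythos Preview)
The paper gives no proof at all for this lemma; it simply states that ``this lemma and its proof are similar to [Mol1, lemma p.\ 3852], the only difference being that we consider two metric spaces here.'' Your conjugation $\tilde\psi_{xy}=\phi\psi_{xy}\phi^{-1}$ and the verification that $(\tilde\psi_{xy},\phi(z_{xy}),k_{xy})$ is an admissible triple for $(\phi(x),\phi(y))$ in $N$ is exactly the ``two metric space'' adaptation the paper alludes to, and your subsequent reduction to the uniqueness of the distinguished point attached to a pair is correct and is equivalent to the lemma itself.

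Where your proposal falls short is in the uniqueness argument, and you are candid about this. The inequality you extract, $k'\,d(z,z')\le d(z,\psi'(z))\le 2\,d(z,z')$, is vacuous: your own upper bound $d(u,\psi(u))\le 2\,d(u,z)$ forces $k'\le 2$ whenever $z\ne z'$, so nothing is gained. The iteration you sketch does not close either. If one alternates $\psi$ and $\psi'$ starting from $z'$ in the boundary case $k=k'=2$ (which is exactly the case used in Propositions~2.15 and~2.16), the successive distances to $z$ and $z'$ can grow without bound---$r,2r,3r,4r,\dots$---with every triangle inequality saturated, so no contradiction is reached in an unbounded space. Thus ``iterating the reflections and letting the strict constants force the fixed points to coincide'' is not, by itself, a proof.

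In short: your write-up matches the paper's approach (both defer to Moln\'ar for the core step), and the reduction you add is sound; but the concrete mechanism you propose for the final uniqueness does not work, and you should replace that paragraph by Moln\'ar's actual argument rather than a heuristic pointing toward it.
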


Applying this lemma to the maps $S_{a\# b}$ we derive the following proposition for Thompson's metric.
\begin{proposition}\label{p:Tmidpoints}
If  $A$ and $B$ are JB-algebras and  $f \colon \CA \to \CB$ is a bijective Thompson's metric isometry, then
\[
f(a \# b) = f(a) \# f(b)\mbox{\quad for all } a,b\in \CA.
\]
\end{proposition}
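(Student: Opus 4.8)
The plan is to apply Lemma~\ref{l:metric_space} to the metric spaces $M = (\CA, d_T)$ and $N = (\CB, d_T)$ with $\phi = f$. For a pair $a,b \in \CA$ I would take $z_{ab} := a \# b$ and $\psi_{ab} := S_{a\#b}$, the symmetry around the geometric mean, and verify the three requirements; the analogous data work verbatim in $\CB$, so both spaces satisfy the hypotheses. The conclusion $f(z_{ab}) = z_{f(a)f(b)}$ then reads $f(a \# b) = f(a) \# f(b)$, which is exactly the assertion.

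Requirement (1) is the identity $S_{a\#b}(a) = b$ and $S_{a\#b}(b) = a$ recorded just after the definition of the geometric mean, and requirement (2) is $S_c(c) = c$ applied to $c = a \# b$; both are immediate. Moreover $\psi_{ab} = S_{a\#b}$ is a bijective isometry of $(\CA, d_T)$, being the composition of the order isomorphism $U_{a\#b}$ and the inverse map, each of which is a $d_T$-isometry.

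The only requirement needing real work is (3): for fixed $c \in \CA$ I must produce a constant $k > 1$ with $d_T(u, S_c(u)) \ge k\, d_T(u,c)$ for all $u \in \CA$. I expect to prove the equality $d_T(u, S_c(u)) = 2\, d_T(u,c)$, so that $k = 2$ suffices. To see this I would conjugate by the $d_T$-isometry $U_{c^{-\frac{1}{2}}}$: writing $v := U_{c^{-\frac{1}{2}}} u$ and using the identities in (\ref{e:U_facts}) (notably $U_{c^{-\frac{1}{2}}} U_c = U_{c^{\frac{1}{2}}}$ from $U_{U_{c^{-1/2}}c} = U_e$, and $(U_{c^{-\frac{1}{2}}} u)^{-1} = U_{c^{\frac{1}{2}}} u^{-1}$), one finds $U_{c^{-\frac{1}{2}}} S_c(u) = v^{-1}$ and $U_{c^{-\frac{1}{2}}} c = e$. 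Since $U_{c^{-\frac{1}{2}}}$ is an isometry, this reduces the claim to $d_T(v, v^{-1}) = 2\, d_T(v, e)$. Now by Proposition~\ref{p:hilbert_thompson_metrics} together with $U_{v^{\frac{1}{2}}} v = v^2$, one has $d_T(v, v^{-1}) = \norm{\log U_{v^{\frac{1}{2}}} v} = \norm{\log v^2} = 2\norm{\log v} = 2\, d_T(v, e)$, which is the desired equality. Equivalently, $c = u \# S_c(u)$ is the midpoint $\gamma_u^{S_c(u)}(1/2)$ of the geodesic path $\gamma_u^{S_c(u)}$, so $d_T(u,c) = \frac{1}{2} d_T(u, S_c(u))$.

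Since every step rests on short computations with the $U$-identities and Proposition~\ref{p:hilbert_thompson_metrics}, there is no genuine obstacle once the data $z_{ab} = a\#b$ and $\psi_{ab} = S_{a\#b}$ are chosen correctly. The only point requiring care is requirement (3), and the key observation there is that the symmetry $S_c$ places its fixed point $c$ exactly at the geodesic midpoint of $u$ and $S_c(u)$, which yields the clean constant $k = 2 > 1$.
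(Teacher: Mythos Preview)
Your proposal is correct and follows essentially the same approach as the paper: both apply Lemma~\ref{l:metric_space} with $\psi_{ab} = S_{a\#b}$, $z_{ab} = a\#b$, and $k_{ab} = 2$, the key point being the identity $d_T(u, S_c(u)) = 2\, d_T(u,c)$. The only cosmetic difference is that the paper verifies this identity in one line via $U_{u^{-1/2}} U_c u^{-1} = (U_{u^{-1/2}} c)^2$ (the first identity in \eqref{e:U_facts}), whereas you first conjugate by $U_{c^{-1/2}}$ to reduce to $d_T(v,v^{-1}) = 2\, d_T(v,e)$; these are the same computation organized differently.
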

\begin{proof}
For $a,b \in \CA$ or $a,b \in \CB$, we already saw that $S_{a\# b}$ is an isometry that satisfies the first two properties in Lemma \ref{l:metric_space}. To show the third property note that  by Proposition~\ref{p:hilbert_thompson_metrics},
\[
d_T(S_c(a),a) =  \norm{\log U_{a^{-\frac{1}{2}}} U_c a^{-1} }=  \norm{ \log \left( U_{a^{-\frac{1}{2}}} c \right)^2 } \\
=  2 \norm{ \log U_{a^{-\frac{1}{2}}} c } =  2 d_T(c,a).
\]
So, if we take  $k_{ab} := 2$, then all conditions of Lemma \ref{l:metric_space} are satisfied, and its application yields the proposition.
\end{proof}
To see that the same result holds for Hilbert's metric isometries on $\ol{A}_+^\circ$, we need to make a couple of observations. Firstly for $c\in \CA$, the map $S_c$ induces a well defined maps $S_{\ol{c}}$ on $\ol{A}_+^\circ$ by letting
$S_{\ol{c}}(\ol{a}):= \ol{S_c(a)}$.  Furthermore,  for $a,b\in \CA$ and $\lambda,\mu>0$ we have that the equation $U_c(\lambda a) =\mu b$ has unique solution $c=(\lambda a)\# (\mu b) = \sqrt{\lambda\mu}(a\# b)$.
Thus, the equation $U_{\ol{c}} \ol{a}^{-1}= \ol{U_ca^{-1}}=\ol{b}$ has a unique solution $\ol{a\#b}$ in $\ol{A}_+^\circ$ for $\ol{a},\ol{b}\in\ol{A}_+^\circ$, and we can define the projective geometric mean by $\ol{a}\#\ol{b} := \ol{a\#b}$ in $\ol{A}_+^\circ$. Note  that $\ol{a}\#\ol{b}=\gamma_{\ol{a}}^{\ol{b}}(1/2)$.  It is now straightforward  to verify that the Hilbert's metric isometries $S_{\ol{a}\#\ol{b}}$ on $\ol{A}_+^\circ$
satisfy the requirements of Lemma \ref{l:metric_space}  with $k_{ab} = 2$ and  derive the following result.
\begin{proposition}\label{p:Hmidpoints}
If  $A$ and $B$ are JB-algebras and  $f \colon \ol{A}_+^\circ \to \ol{B}_+^\circ$ is a bijective Hilbert's metric isometry, then
\[
f(\ol{a} \# \ol{b}) = f(\ol{a}) \# f(\ol{b})\mbox{\quad for all }\ol{a},\ol{b}\in \ol{A}_+^\circ.
\]
\end{proposition}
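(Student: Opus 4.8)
The plan is to run the argument of Proposition~\ref{p:Tmidpoints} verbatim in the projective setting, replacing the norm $\norm{\cdot}$ everywhere by the variation seminorm $\norm{\cdot}_v$. Concretely, I would invoke Lemma~\ref{l:metric_space} with $M=\ol{A}_+^\circ$ and $N=\ol{B}_+^\circ$, both carrying $d_H$, and for each pair $\ol{a},\ol{b}$ choose $z_{\ol{a}\ol{b}}:=\ol{a}\#\ol{b}$, $\psi_{\ol{a}\ol{b}}:=S_{\ol{a}\#\ol{b}}$, and $k_{\ol{a}\ol{b}}:=2$. Before checking the hypotheses I would record that each $S_{\ol{c}}$ is a well-defined bijective $d_H$-isometry: from $S_c(\lambda a)=\lambda^{-1}S_c(a)$ and $S_{\mu c}=\mu^2 S_c$ the assignment $S_{\ol{c}}(\ol{a}):=\ol{S_c(a)}$ is independent of both representatives, and as the composite of the order isomorphism $U_c$ with the inverse map it is a $d_H$-isometry by the remarks around Proposition~\ref{p:hilbert_thompson_metrics}. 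The same construction works symmetrically on $\ol{B}_+^\circ$, so both spaces satisfy the standing requirements of the lemma.

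Next I would verify conditions (i)--(iii). Properties (i) and (ii) pass to rays directly from the identities $S_{a\#b}(a)=b$, $S_{a\#b}(b)=a$, and $S_c(c)=c$ recorded before the statement, giving $S_{\ol{a}\#\ol{b}}(\ol{a})=\ol{b}$, $S_{\ol{a}\#\ol{b}}(\ol{b})=\ol{a}$, and $S_{\ol{a}\#\ol{b}}(\ol{a}\#\ol{b})=\ol{a}\#\ol{b}$. The only real computation is (iii), which I would obtain exactly as in Proposition~\ref{p:Tmidpoints}: using Proposition~\ref{p:hilbert_thompson_metrics}, the first identity in (\ref{e:U_facts}) (so that $U_{u^{-\frac{1}{2}}}U_c u^{-1}=(U_{u^{-\frac{1}{2}}}c)^2$), and $\log(x^2)=2\log x$ for $x\in\CA$, one gets
\[
 d_H(\ol{S_c(u)},\ol{u}) = \norm{\log U_{u^{-\frac{1}{2}}}U_c u^{-1}}_v = \norm{\log(U_{u^{-\frac{1}{2}}}c)^2}_v = 2\norm{\log U_{u^{-\frac{1}{2}}}c}_v = 2\,d_H(\ol{c},\ol{u}),
\]
the third equality using the positive homogeneity $\norm{2x}_v=2\norm{x}_v$ (immediate from $\diam\sigma(2x)=2\diam\sigma(x)$). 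Taking $\ol{c}=\ol{a}\#\ol{b}$ this is (iii) with $k_{\ol{a}\ol{b}}=2$, in fact with equality.

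With all hypotheses in place, applying Lemma~\ref{l:metric_space} to the bijective $d_H$-isometry $f$ gives $f(\ol{a}\#\ol{b})=f(z_{\ol{a}\ol{b}})=z_{f(\ol{a})f(\ol{b})}=f(\ol{a})\#f(\ol{b})$, as required. I do not anticipate a genuine obstacle: the whole argument is parallel to the Thompson case, and the only points needing attention are the well-definedness of $S_{\ol{c}}$ on rays and the fact that the factor $2$ survives the passage from $\norm{\cdot}$ to $\norm{\cdot}_v$, both of which are routine.
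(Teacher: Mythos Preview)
Your proposal is correct and follows exactly the approach the paper intends: the text preceding Proposition~\ref{p:Hmidpoints} observes that $S_{\ol{c}}$ is well defined on rays and then says it is ``straightforward to verify that the Hilbert's metric isometries $S_{\ol{a}\#\ol{b}}$ on $\ol{A}_+^\circ$ satisfy the requirements of Lemma~\ref{l:metric_space} with $k_{ab}=2$.'' Your write-up carries out precisely this verification, replacing $\norm{\cdot}$ by $\norm{\cdot}_v$ in the key computation from Proposition~\ref{p:Tmidpoints}, and there is no substantive difference from the paper's own argument.
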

The next proposition will be useful.
\begin{proposition}\label{p:geo_midpoints}
For all $a,b \in \CA$ and $t,s \in [0,1]$,
\[ \gamma_a^b(t) \# \gamma_a^b(s) =  \gamma_a^b \left( \frac{t+s}{2} \right). \]
\end{proposition}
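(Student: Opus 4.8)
The plan is to reduce the identity to a computation inside the associative subalgebra generated by a single element, by first stripping off the outer quadratic representation. Write $c := U_{a^{-\frac{1}{2}}} b \in \CA$, so that $\gamma_a^b(t) = U_{a^{\frac{1}{2}}} c^t$ for every $t \in [0,1]$. The strategy is then twofold: show that $U_{a^{\frac{1}{2}}}$ commutes with the geometric mean, and compute $c^t \# c^s$ by functional calculus.

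The first step is to show that the order isomorphism $U_{a^{\frac{1}{2}}}$ preserves geometric means, that is, $U_{a^{\frac{1}{2}}}(u \# v) = (U_{a^{\frac{1}{2}}} u) \# (U_{a^{\frac{1}{2}}} v)$ for all $u,v \in \CA$. Recalling that $u \# v$ is the unique $z \in \CA$ solving $U_z u^{-1} = v$, I would set $w := U_{a^{\frac{1}{2}}}(u \# v)$ and verify that $w$ solves $U_w (U_{a^{\frac{1}{2}}} u)^{-1} = U_{a^{\frac{1}{2}}} v$. This is a direct manipulation using the identities in \eqref{e:U_facts}: the fifth identity $U_{U_a b} = U_a U_b U_a$ gives $U_w = U_{a^{\frac{1}{2}}} U_{u\#v} U_{a^{\frac{1}{2}}}$, the fourth and third identities give $(U_{a^{\frac{1}{2}}} u)^{-1} = U_{a^{-\frac{1}{2}}} u^{-1}$ and $U_{a^{\frac{1}{2}}} U_{a^{-\frac{1}{2}}} = \mathrm{Id}$, and then the defining relation $U_{u \# v} u^{-1} = v$ collapses the expression to $U_{a^{\frac{1}{2}}} v$, as required. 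By the uniqueness of the solution of $S_c(a)=b$ established above, $w = (U_{a^{\frac{1}{2}}} u) \# (U_{a^{\frac{1}{2}}} v)$.

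The second step is the commutative computation $c^t \# c^s = c^{\frac{t+s}{2}}$. Since $c^t$ and $c^s$ lie in the associative JB-subalgebra $\mathrm{JB}(c,e)$, I would expand the definition of the geometric mean and apply the seventh identity $U_{c^\lambda} c^\mu = c^{2\lambda + \mu}$ three times: $U_{c^{-\frac{t}{2}}} c^s = c^{s-t}$, then $(c^{s-t})^{\frac{1}{2}} = c^{\frac{s-t}{2}}$, and finally $U_{c^{\frac{t}{2}}} c^{\frac{s-t}{2}} = c^{\frac{t+s}{2}}$ (the exponents cause no trouble since \eqref{e:U_facts} holds for all real $\lambda,\mu$, and $\tfrac{t+s}{2}\in[0,1]$).

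Combining the two steps finishes the proof, since $\gamma_a^b(t) \# \gamma_a^b(s) = (U_{a^{\frac{1}{2}}} c^t) \# (U_{a^{\frac{1}{2}}} c^s) = U_{a^{\frac{1}{2}}}(c^t \# c^s) = U_{a^{\frac{1}{2}}} c^{\frac{t+s}{2}} = \gamma_a^b\!\left(\frac{t+s}{2}\right)$. I do not expect a serious obstacle: the only point requiring care is the bookkeeping of the $U$-operator identities in the first step, and in particular invoking uniqueness to pass from ``both sides solve the same equation'' to genuine equality. An alternative route that bypasses the identity manipulations altogether is to apply Proposition~\ref{p:Tmidpoints} to $f = U_{a^{\frac{1}{2}}} \colon \CA \to \CA$, which is a bijective Thompson's metric isometry and hence preserves geometric means; the functional calculus computation of the second step would then be all that remains.
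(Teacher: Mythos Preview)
Your argument is correct and uses the same ingredients as the paper --- the identities in \eqref{e:U_facts} and the uniqueness of the geometric mean --- but organizes them differently. The paper proceeds by a single direct verification: it simply checks that $c := \gamma_a^b\!\left(\tfrac{t+s}{2}\right)$ satisfies $U_c\,\gamma_a^b(t)^{-1} = \gamma_a^b(s)$ via one chain of equalities, expanding $U_{U_{a^{1/2}}(U_{a^{-1/2}}b)^{(t+s)/2}}$ with the fifth identity and collapsing with the seventh. Your version factors this same computation into two reusable pieces: the general fact that $U_{a^{1/2}}$ preserves geometric means, and the commutative calculation $c^t \# c^s = c^{(t+s)/2}$. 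The paper's route is marginally shorter; yours isolates a lemma that is worth having on its own (and indeed your alternative, invoking Proposition~\ref{p:Tmidpoints} for $f = U_{a^{1/2}}$, makes that first step free). There is no gap; the bookkeeping you flag is exactly right, and no circularity arises since Proposition~\ref{p:Tmidpoints} is already established.
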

\begin{proof}
Using \eqref{e:U_facts}, the computation below shows that $c=\gamma((t+s)/2) $ is a positive solution of $U_c \gamma(t)^{-1} = \gamma(s)$, which proves the proposition.
\begin{eqnarray*}
U_{\gamma(\frac{t+s}{2})} \gamma(t)^{-1} &= & U_{U_{a^{\frac{1}{2}}} (U_{a^{-\frac{1}{2}}}b)^{\frac{t+s}{2}}} \left( U_{a^{\frac{1}{2}}} (U_{a^{-\frac{1}{2}}}b)^t \right)^{-1} \\
&= & U_{a^{\frac{1}{2}}} U_{(U_{a^{-\frac{1}{2}}}b)^{\frac{t+s}{2}}} U_{a^{\frac{1}{2}}} U_{a^{-\frac{1}{2}}} (U_{a^{-\frac{1}{2}}}b)^{-t} \\
&= & U_{a^{\frac{1}{2}}} U_{(U_{a^{-\frac{1}{2}}}b)^{\frac{t+s}{2}}} (U_{a^{-\frac{1}{2}}}b)^{-t} \\
&= & U_{a^{\frac{1}{2}}} (U_{a^{-\frac{1}{2}}}b)^{s} \\
&= & \gamma(s).
\end{eqnarray*}
\end{proof}
It is straightforward to derive a similar identity for Hilbert's metric.
\begin{proposition}\label{p:geo_midpoints_H}
For all $\ol{a},\ol{b} \in \ol{A}_+^\circ$ and $t,s \in [0,1]$,
\[ \gamma_{\ol{a}}^{\ol{b}}(t) \# \gamma_{\ol{a}}^{\ol{b}}(s) =  \gamma_{\ol{a}}^{\ol{b}} \left( \frac{t+s}{2} \right). \]
\end{proposition}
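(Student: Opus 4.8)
The plan is to descend the identity of Proposition~\ref{p:geo_midpoints}, which holds on the cone $\CA$ itself, to the projective quotient $\ol{A}_+^\circ$. Both operations appearing in the claim are defined on $\ol{A}_+^\circ$ precisely by passing to representatives: the path satisfies $\gamma_{\ol{a}}^{\ol{b}}(t)=\ol{\gamma_a^b(t)}$ for any choice of representatives $a,b\in\CA$, and the projective geometric mean is given by $\ol{c}\#\ol{d}=\ol{c\#d}$. Thus the whole argument reduces to a formal computation in the quotient, obtained by feeding the cone-level identity through these definitions.

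Concretely, I would fix representatives $a,b\in\CA$ of $\ol{a}$ and $\ol{b}$. By definition of the path, $\gamma_{\ol{a}}^{\ol{b}}(t)=\ol{\gamma_a^b(t)}$ and $\gamma_{\ol{a}}^{\ol{b}}(s)=\ol{\gamma_a^b(s)}$, and these are rays with representatives $\gamma_a^b(t),\gamma_a^b(s)\in\CA$. Applying the definition of the projective geometric mean to these representatives gives
\[
\gamma_{\ol{a}}^{\ol{b}}(t)\#\gamma_{\ol{a}}^{\ol{b}}(s)=\ol{\gamma_a^b(t)}\#\ol{\gamma_a^b(s)}=\ol{\gamma_a^b(t)\#\gamma_a^b(s)}.
\]
Proposition~\ref{p:geo_midpoints} now identifies the bracketed cone element as $\gamma_a^b\big(\tfrac{t+s}{2}\big)$, so the right-hand side equals $\ol{\gamma_a^b(\tfrac{t+s}{2})}=\gamma_{\ol{a}}^{\ol{b}}(\tfrac{t+s}{2})$, which is the desired identity.

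The only point requiring any care — and the sole potential obstacle — is the well-definedness underlying the step $\ol{c}\#\ol{d}=\ol{c\#d}$: the projective mean must not depend on the chosen representatives. This is exactly the observation recorded just before Proposition~\ref{p:Hmidpoints}, namely that $(\lambda a)\#(\mu b)=\sqrt{\lambda\mu}\,(a\#b)$, so that rescaling either representative only rescales the mean and hence leaves its ray unchanged. Likewise the path value $\gamma_{\ol{a}}^{\ol{b}}(t)$ is representative-independent by the scaling relation $\ol{\gamma_{\lambda a}^{\mu b}(t)}=\ol{\gamma_a^b(t)}$ noted earlier. Once these well-definedness facts are in hand there is no genuine difficulty: the claim is simply the cone-level statement of Proposition~\ref{p:geo_midpoints} read in the quotient.
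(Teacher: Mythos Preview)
Your proof is correct and follows exactly the same approach as the paper: pass to representatives, use the definitions $\gamma_{\ol{a}}^{\ol{b}}(t)=\ol{\gamma_a^b(t)}$ and $\ol{c}\#\ol{d}=\ol{c\#d}$, and invoke Proposition~\ref{p:geo_midpoints}. The paper compresses this into a single chain of equalities, while you additionally spell out the well-definedness considerations, but the argument is identical.
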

\begin{proof}
The proof follows from Proposition \ref{p:geo_midpoints} and
\[ \gamma_{\ol{a}}^{\ol{b}}(t) \# \gamma_{\ol{a}}^{\ol{b}}(s) = \ol{\gamma_a^b(t)} \# \ol{\gamma_a^b(s)}=  \ol{\gamma_a^b(t) \# \gamma_a^b(s)} = \ol{ \gamma_a^b \left( \frac{t+s}{2} \right)} =  \gamma_{\ol{a}}^{\ol{b}} \left( \frac{t+s}{2} \right).\]
\end{proof}
By combining Propositions \ref{p:Tmidpoints}  and \ref{p:geo_midpoints} we derive the following corollary. The proof  uses the fact that the equation $a\# c=b$ has a unique solution $c=U_ba$, which can be easily shown using  \eqref{e:U_facts}.
\begin{corollary}\label{c:exp_commutes_with_powers_T}
Let $A$ and $B$ be JB-algebras. If $f \colon \CA \to \CB$ is a bijective Thompson's metric isometry, then
\begin{enumerate}[(a)]
\item $f$ maps $\gamma_a^b(t)$ to $\gamma_{f(a)}^{f(b)}(t)$ for all $a,b\in \CA$ and $t\in [0,1]$.
\item If $f(e)=e$, then  $f(a^t) = f(a)^t$ for all $t\in[0,1]$. Moreover, we have $f(a^{-1}) = f(a)^{-1}$ and $f(U_b a) = U_{f(b)} f(a)$.
\end{enumerate}
\end{corollary}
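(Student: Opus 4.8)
The plan is to establish (a) by a dyadic midpoint induction combined with a continuity argument, and then to read off the three identities in (b) by characterising $a^{t}$, $a^{-1}$ and $U_b a$ as solutions of geometric-mean equations, exploiting that $f$ preserves geometric means (Proposition~\ref{p:Tmidpoints}).

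For (a), I first note that the endpoints agree, since $f(\gamma_a^b(0)) = f(a) = \gamma_{f(a)}^{f(b)}(0)$ and $f(\gamma_a^b(1)) = f(b) = \gamma_{f(a)}^{f(b)}(1)$. I then prove by induction on $n$ that $f(\gamma_a^b(t)) = \gamma_{f(a)}^{f(b)}(t)$ for every dyadic $t = k/2^{n} \in [0,1]$, the case $n=0$ being the endpoint identity. For the inductive step, an odd numerator $t = (2m+1)/2^{n+1}$ is the average of the level-$n$ dyadics $m/2^n$ and $(m+1)/2^n$, so Proposition~\ref{p:geo_midpoints} gives $\gamma_a^b(t) = \gamma_a^b(m/2^n) \# \gamma_a^b((m+1)/2^n)$. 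Applying $f$, using Proposition~\ref{p:Tmidpoints} together with the induction hypothesis, and then Proposition~\ref{p:geo_midpoints} once more in $\CB$, yields
\[
f(\gamma_a^b(t)) = \gamma_{f(a)}^{f(b)}(m/2^n) \# \gamma_{f(a)}^{f(b)}((m+1)/2^n) = \gamma_{f(a)}^{f(b)}(t).
\]
To pass from dyadics to all of $[0,1]$, observe that $t \mapsto f(\gamma_a^b(t))$ and $t \mapsto \gamma_{f(a)}^{f(b)}(t)$ are both continuous maps from $[0,1]$ into the metric space $(\CB, d_T)$: the first because $f$ is a $d_T$-isometry and $\gamma_a^b$ is a $d_T$-geodesic path, the second because it is itself a $d_T$-geodesic path. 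Since they agree on the dense set of dyadic rationals, they agree on all of $[0,1]$, which proves (a).

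For (b), a direct computation with \eqref{e:U_facts} shows that for $x,z \in \CA$ the equation $x \# c = z$ has the unique solution $c = U_z x^{-1}$, and I use this fact repeatedly. First, $\gamma_e^a(t) = U_{e^{1/2}}(U_{e^{-1/2}}a)^{t} = a^{t}$ because $U_e$ is the identity, so applying (a) to the pair $(e,a)$ and using $f(e)=e$ gives $f(a^{t}) = \gamma_{f(e)}^{f(a)}(t) = \gamma_{e}^{f(a)}(t) = f(a)^{t}$ for all $t \in [0,1]$. Next, the case $z=e$ of the fact shows that $x \# x^{-1} = e$ for every $x \in \CA$, so by Proposition~\ref{p:Tmidpoints} and $f(e)=e$,
\[
f(a) \# f(a^{-1}) = f(a \# a^{-1}) = f(e) = e = f(a) \# f(a)^{-1},
\]
and uniqueness of the solution of $f(a) \# c = e$ forces $f(a^{-1}) = f(a)^{-1}$. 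Finally, replacing $x$ by $a^{-1}$ in the fact characterises $U_b a$ as the unique $c$ with $a^{-1} \# c = b$, that is $a^{-1} \# (U_b a) = b$. Applying $f$ and using Proposition~\ref{p:Tmidpoints} together with $f(a^{-1}) = f(a)^{-1}$ gives $f(a)^{-1} \# f(U_b a) = f(b)$, whence the same uniqueness statement in $\CB$ yields $f(U_b a) = U_{f(b)} f(a)$.

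The only genuinely delicate point is the dyadic-to-continuum passage in (a): one must ensure that both sides are continuous into one and the same topology. This is why I keep everything expressed through $d_T$, so that continuity of an isometry and of a geodesic path is immediate and no comparison with the order-unit norm topology is needed. Every remaining step reduces to a short manipulation with \eqref{e:U_facts} and the two midpoint propositions.
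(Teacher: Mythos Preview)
Your proof is correct and follows essentially the same approach as the paper: a dyadic-midpoint induction via Propositions~\ref{p:Tmidpoints} and~\ref{p:geo_midpoints} followed by density for (a), and then the identities in (b) obtained by recognising $a^t$, $a^{-1}$, and $U_b a$ through geometric-mean equations. Your presentation is in fact more explicit about the continuity step and about the key algebraic fact $x \# c = z \iff c = U_z x^{-1}$ than the paper itself.
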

\begin{proof}
By Propositions~\ref{p:geo_midpoints} and \ref{p:Tmidpoints}, the first statement holds for all dyadic rationals $t \in [0,1]$. As the dyadic rationals are dense in $[0,1]$, it holds for all $0 \leq t \leq 1$.

Suppose $f(e)=e$. Since $\gamma_e^a(t) = a^t$, the first statement yields that $f(a^t) = f(a)^t$ for all $0 \leq t \leq 1$.

Since
$$ a \# a^{-1} = U_{a^{\frac{1}{2}}} ( U_{a^{-\frac{1}{2}}} a^{-1} )^\frac{1}{2} = U_{a^{\frac{1}{2}}} a^{-1} = e, $$
we have that $f(a) \# f(a^{-1}) = f(a \# a^{-1}) = f(e) = e = f(a) \# f(a)^{-1}$, so by uniqueness of the solution of $f(a) \# c = e$, we obtain $f(a^{-1}) = f(a)^{-1}$. Using  \eqref{e:U_facts} we also get
$$ f(a)^{-1} \# f(U_b a) = f(a^{-1} \# U_b a) = f(U_{a^{-\frac{1}{2}}} (U_{a^\frac{1}{2}} U_b a)^\frac{1}{2} ) = f(U_{a^{-\frac{1}{2}}} U_{a^\frac{1}{2}} b) = f(b), $$
 so $f(b)$ is a solution to $S_c(f(a)^{-1}) = f(U_ba)$, i.e., $U_{f(b)} f(a) = f(U_b a)$.

\end{proof}
Again, a similar result holds for Hilbert's metric. The proof is analogous to the one for Thompson's metric in Corollary \ref{c:exp_commutes_with_powers_T} and is left to the reader.
\begin{corollary}\label{c:exp_commutes_with_powers_H}
Let $A$ and $B$ be JB-algebras. If $f \colon \ol{A}_+^\circ \to \ol{B}_+^\circ$ is a bijective Hilbert's metric isometry, then
\begin{enumerate}[(a)]
\item $f$ maps $\gamma_{\ol{a}}^{\ol{b}}(t)$ to $\gamma_{f(\ol{a})}^{f(\ol{b})}(t)$ for all $\ol{a},\ol{b}\in \ol{A}_+^\circ$ and $t\in [0,1]$.
\item If $f(\ol{e})=\ol{e}$, then  $f(\ol{a}^t) = f(\ol{a})^t$ for all $t\in[0,1]$. Moreover, we have $f(\ol{a}^{-1}) = f(\ol{a})^{-1}$ and $f(U_{\ol{b}}\ol{a}) = U_{f(\ol{b})} f(\ol{a})$.
\end{enumerate}
\end{corollary}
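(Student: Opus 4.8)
The plan is to transcribe the proof of Corollary~\ref{c:exp_commutes_with_powers_T} almost verbatim, replacing each Thompson's metric ingredient by its Hilbert's metric (projective) counterpart: Proposition~\ref{p:Hmidpoints} takes the place of Proposition~\ref{p:Tmidpoints}, and Proposition~\ref{p:geo_midpoints_H} takes the place of Proposition~\ref{p:geo_midpoints}. For part~(a) I would first combine these two propositions into
\[
f\big(\gamma_{\ol a}^{\ol b}(\tfrac{t+s}{2})\big) = f\big(\gamma_{\ol a}^{\ol b}(t)\#\gamma_{\ol a}^{\ol b}(s)\big) = f(\gamma_{\ol a}^{\ol b}(t))\#f(\gamma_{\ol a}^{\ol b}(s)),
\]
and then argue by induction on the dyadic denominator, using the endpoint values $\gamma_{\ol a}^{\ol b}(0)=\ol a$ and $\gamma_{\ol a}^{\ol b}(1)=\ol b$, that $f(\gamma_{\ol a}^{\ol b}(t))=\gamma_{f(\ol a)}^{f(\ol b)}(t)$ for every dyadic rational $t\in[0,1]$. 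Since $f$ is a $d_H$-isometry it is continuous, and $t\mapsto\gamma_{\ol a}^{\ol b}(t)$ is a $d_H$-geodesic path and hence continuous, so the identity extends from the dense set of dyadic rationals to all of $[0,1]$.

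For part~(b), assume $f(\ol e)=\ol e$. Because $U_e=\mathrm{id}$ we have $\gamma_{\ol e}^{\ol a}(t)=\ol{a^t}=\ol a^t$, so part~(a) immediately gives $f(\ol a^t)=f(\ol a)^t$ for all $t\in[0,1]$. For the inverse I would use $\ol a\#\ol a^{-1}=\ol{a\#a^{-1}}=\ol e$, so by Proposition~\ref{p:Hmidpoints},
\[
f(\ol a)\#f(\ol a^{-1}) = f(\ol a\#\ol a^{-1}) = f(\ol e)=\ol e = f(\ol a)\#f(\ol a)^{-1};
\]
the uniqueness of the solution $\ol c$ of $U_{\ol c}f(\ol a)^{-1}=\ol e$ in $\ol{B}_+^\circ$ recorded just before Proposition~\ref{p:Hmidpoints} then forces $f(\ol a^{-1})=f(\ol a)^{-1}$. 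Finally, for the quadratic representation I would transcribe the cone computation $a^{-1}\#U_b a=U_{a^{-1/2}}(U_{a^{1/2}}U_b a)^{1/2}=U_{a^{-1/2}}U_{a^{1/2}}b=b$ to the ray space to obtain $\ol a^{-1}\#U_{\ol b}\ol a=\ol b$. Applying $f$, together with Proposition~\ref{p:Hmidpoints} and the inverse relation just proved, yields $f(\ol a)^{-1}\#f(U_{\ol b}\ol a)=f(\ol b)$, and reading this through the defining property of the projective geometric mean (the unique solution $\ol c$ of $U_{\ol c}f(\ol a)=f(U_{\ol b}\ol a)$ is $\ol c=f(\ol b)$) gives $U_{f(\ol b)}f(\ol a)=f(U_{\ol b}\ol a)$.

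I do not expect a genuine obstacle here, since the statement is explicitly parallel to the Thompson case; the only points requiring care, rather than pure transcription, are the descent of the geometric-mean identities and their uniqueness statements from $\CA$ to the ray space $\ol A_+^\circ$, and the continuity used to pass from dyadic rationals to all of $[0,1]$ in part~(a). Both are essentially in hand: the well-definedness and uniqueness of the projective geometric mean, together with $\ol a\#\ol b=\ol{a\#b}$ and $\ol a\#\ol b=\gamma_{\ol a}^{\ol b}(1/2)$, are established in the discussion preceding Proposition~\ref{p:Hmidpoints}, and the required descent of each algebraic identity to rays follows because $U_a$, $\gamma_a^b$, and $S_c$ all respect positive scalar multiplication and hence induce well-defined maps on $\ol A_+^\circ$.
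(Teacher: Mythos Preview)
Your proposal is correct and follows exactly the approach the paper intends: the paper's ``proof'' of Corollary~\ref{c:exp_commutes_with_powers_H} is simply the sentence that the argument is analogous to Corollary~\ref{c:exp_commutes_with_powers_T} and is left to the reader, and you have carried out precisely that transcription using Propositions~\ref{p:Hmidpoints} and~\ref{p:geo_midpoints_H}. One small slip: in the inverse step the uniqueness you need is that of the solution $\ol c$ of $f(\ol a)\#\ol c=\ol e$ (equivalently, injectivity of $f(\ol a)\#(\cdot)$), not of $U_{\ol c}f(\ol a)^{-1}=\ol e$; the latter equation determines $\ol c=f(\ol a)^{1/2}$, which is not what is being compared.
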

Now we can prove an essential ingredient for characterizing bijective Hilbert's metric and Thompson's metric isometries of cones of JB-algebras. Recall that $[A] = A / \Span(e)$.
\begin{theorem}\label{t:inducing_isometry}
Let $A$ and $B$ be JB-algebras.
\begin{enumerate}[(a)]
\item If $f \colon \CA \to \CB$ is a bijective Thompson's metric isometry with $f(e) = e$, then $S \colon A \to B$ given by
\[ Sa := \log f( \exp(a)), \]
is a bijective linear $\norm{\cdot}$-isometry.
\item If $f\colon \ol{A}_+^\circ\to \ol{B}_+^\circ $ is a bijective Hilbert's isometry with $f(\ol{e}) = \ol{e}$, then $S \colon [A] \to [B]$ given by
\[ S[a] := \log f( \exp([a])),\]
is a bijective linear $\norm{\cdot}_v$-isometry.
\end{enumerate}
\end{theorem}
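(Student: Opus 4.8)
The plan is to show that $S$ is a surjective metric isometry fixing the origin and then to invoke the Mazur--Ulam theorem to upgrade this to linearity. I treat part (a) in detail; part (b) is identical after replacing $d_T$, $\norm{\cdot}$, $d_n^T$ and Corollary~\ref{c:exp_commutes_with_powers_T} by $d_H$, $\norm{\cdot}_v$, $d_n^H$ and Corollary~\ref{c:exp_commutes_with_powers_H}, and working in the quotient $[A]$ rather than in $A$. First, $S=\Log\circ f\circ\Exp$ is a bijection, since $\Exp\colon A\to\CA$ and $\Log\colon\CB\to B$ are mutually inverse bijections (recalled in the preliminaries) and $f$ is bijective by hypothesis. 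Moreover $S(0)=\log f(\exp 0)=\log f(e)=\log e=0$ (and in part (b), $S[0]=\log f(\ol e)=\log\ol e=[0]$ because $f(\ol e)=\ol e$), so $S$ fixes the origin.

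The heart of the argument is an identity letting $S$ commute with the scaling used in the approximating metrics. For $a\in A$ and $n\ge 1$ one has $\exp(a/n)=\exp(a)^{1/n}$, and since $1/n\in[0,1]$ and $f(e)=e$, Corollary~\ref{c:exp_commutes_with_powers_T}(b) gives
\[
f(\exp(a/n))=f\big(\exp(a)^{1/n}\big)=f(\exp a)^{1/n}=\exp(Sa)^{1/n}=\exp(Sa/n).
\]
It is exactly here that the geometric-mean machinery (Propositions~\ref{p:Tmidpoints} and \ref{p:geo_midpoints}, distilled into Corollary~\ref{c:exp_commutes_with_powers_T}) enters.

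Using this identity together with the fact that $f$ is a $d_T$-isometry, I then compute
\[
d_n^T(Sa,Sb)=n\,d_T\big(\exp(Sa/n),\exp(Sb/n)\big)=n\,d_T\big(f(\exp(a/n)),f(\exp(b/n))\big)=n\,d_T\big(\exp(a/n),\exp(b/n)\big)=d_n^T(a,b).
\]
Letting $n\to\infty$ and applying Proposition~\ref{p:d_n_converges_to_norm} to both sides yields $\norm{Sa-Sb}=\norm{a-b}$, so $S$ is a metric isometry for the order unit norm (and in part (b), $\norm{S[a]-S[b]}_v=\norm{[a]-[b]}_v$, where $\norm{\cdot}_v$ is a genuine norm on $[A]$ and $[B]$ as noted in the preliminaries). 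Since $S$ is a \emph{surjective} isometry between real normed spaces fixing the origin, the Mazur--Ulam theorem shows $S$ is affine, hence linear. This finishes (a), and (b) follows from the analogous computation.

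The only genuinely non-formal point, and hence the main obstacle, is that one cannot establish linearity of $S$ directly, since the definition $Sa=\log f(\exp a)$ composes the three nonlinear maps $\log$, $f$, $\exp$. The way around this is to first secure the \emph{metric} isometry property through the approximating metrics $d_n^T$ and their norm limits, and then let Mazur--Ulam supply linearity at no extra cost. The step to get right is the identity $f(\exp(a/n))=\exp(Sa/n)$: it simultaneously requires $1/n\in[0,1]$ so that Corollary~\ref{c:exp_commutes_with_powers_T}(b) applies, and it is precisely what makes the scaling built into $d_n^T$ pass through $S$.
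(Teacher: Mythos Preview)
Your proof is correct and follows essentially the same approach as the paper: establish the identity $f(\exp(a/n))=\exp(Sa/n)$ via the power-preserving corollary, deduce $d_n^T(Sa,Sb)=d_n^T(a,b)$, pass to the limit using Proposition~\ref{p:d_n_converges_to_norm}, and conclude linearity by Mazur--Ulam. The only cosmetic difference is that the paper details part~(b) and remarks that (a) is analogous, whereas you do the reverse.
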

\begin{proof}
We will prove the second assertion. The same arguments can be used to show the statements for Thompson's metric.
Using Corollary~\ref{c:exp_commutes_with_powers_H},
\[
\exp(S[a]/n) =  \exp (\log(f( \exp([a])))/n)
= \exp (\log f(\ol{\exp(a)})^{1/n})
= f( \ol{\exp(a)})^{1/n}
= f( \ol{\exp(a/n)}).
\]
Thus,
\begin{align*}
d_n^H(S[a], S[b]) &= n d_H( \exp(S[a]/n), \exp(S[b]/n))\\&=n d_H(f( \ol{\exp(a/n)}), f( \ol{\exp(b/n)})) \\
&= n d_H(\exp(a/n), \exp(b/n))\\&=d^H_n([a],[b]).
\end{align*}
By Proposition~\ref{p:d_n_converges_to_norm} the left-hand side of the above equation converges to $\norm{S[a]-S[b]}_v$ and the right-hand side converges to $\norm{[a]-[b]}_v$ as $n\to\infty$. Hence $S$ is a bijective $\norm{\cdot}_v$-isometry. As $f(\ol{e})= \ol{e}$, we have that $S[0] = [0]$, and hence $S$ is linear by the Mazur-Ulam theorem.
\end{proof}

\begin{remark}
The map $\Exp \colon A \to \CA$ is a bijection. In the associative case, where $A = C(K)$ for some compact Hausdorff space $K$, one can show that this bijection induces an isometric isomorphism between the spaces $(A, \norm{\cdot})$ and $(\CA, d_T)$, see \cite{LRW1}. Likewise, the exponential map yields an  isometric isomorphism between $([A], \norm{\cdot}_v)$ and $(\ol{A}_+^\circ,d_H)$ if $A=C(K)$.  In the nonassociative case this is no longer true. In fact, it has been shown for finite dimensional order unit spaces $A$ that $(\ol{A}_+^\circ,d_H)$ is isometric to a normed space if and only if $A_+$ is a simplicial cone, see \cite{FKa}. For Thompson's metric the same result  holds, see \cite[Theorem 7.7]{LR}.
\end{remark}

\section{Thompson's metric isometries of JB-algebras}\label{sec:thomp}

The next basic property of Thompson's metric on products of cones will be useful.
\begin{proposition}\label{p:thompson_product}
Suppose that $A$ is a product of order unit space $A_i$ for $i \in I$.  If $d_T^i$  denotes the Thompson's metric on $A_{i+}^\circ$ and $a= (a_i), b = (b_i)\in \CA$, then
\[ d_T(a,b) = \sup_{i \in I} d_T^i(a_i,b_i). \]
\end{proposition}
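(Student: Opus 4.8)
The plan is to reduce everything to the single quantity $M(a/b)=\inf\{\beta>0\colon a\leq \beta b\}$, which defines both metrics, and to show that it decouples across the product as $M(a/b)=\sup_{i\in I}M(a_i/b_i)$; the formula for $d_T$ then drops out by elementary manipulations with $\max$ and $\log$. First I would record that the ordering on $A=\prod_{i}A_i$ is coordinatewise, so that for $\beta>0$ one has $a\leq \beta b$ in $A$ if and only if $a_i\leq \beta b_i$ in $A_i$ for every $i\in I$. Since $b\in \CA$ is an order unit of $A$, each $b_i$ is an order unit of $A_i$ (test the order unit property against the elements supported in a single coordinate), so every $M(a_i/b_i)$ is finite, and likewise every $M(b_i/a_i)$.

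The key step is the identity $M(a/b)=\sup_{i\in I}M(a_i/b_i)$. To prove it I would set $S:=\{\beta>0\colon a\leq \beta b\}$ and $S_i:=\{\beta>0\colon a_i\leq \beta b_i\}$, so that $S=\bigcap_{i\in I}S_i$, $M(a/b)=\inf S$, and $m_i:=M(a_i/b_i)=\inf S_i$. Each $S_i$ is an up-ray: if $\beta\in S_i$ and $\gamma\geq \beta$, then
\[
\gamma b_i-a_i=(\gamma-\beta)b_i+(\beta b_i-a_i)\in A_{i+},
\]
since $b_i\geq 0$ and $\beta b_i-a_i\geq 0$, whence $\gamma\in S_i$. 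Consequently $(m_i,\infty)\subseteq S_i\subseteq[m_i,\infty)$, and intersecting over $i$ gives $(\sup_i m_i,\infty)\subseteq S\subseteq[\sup_i m_i,\infty)$. Taking infima yields $M(a/b)=\inf S=\sup_{i}m_i$, exactly as claimed; the symmetric computation gives $M(b/a)=\sup_{i}M(b_i/a_i)$.

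Finally I would assemble the pieces. Using the definition \eqref{d_T} and the two identities just proved,
\[
d_T(a,b)=\log\max\Big\{\sup_{i}M(a_i/b_i),\ \sup_{i}M(b_i/a_i)\Big\}.
\]
The interchange $\max\{\sup_i x_i,\sup_i y_i\}=\sup_i\max\{x_i,y_i\}$ for nonnegative families, together with the fact that $\log$ is continuous and strictly increasing on $(0,\infty)$ (so that $\log\sup_i c_i=\sup_i\log c_i$ for a bounded positive family $c_i$), gives
\[
d_T(a,b)=\sup_{i}\log\max\{M(a_i/b_i),M(b_i/a_i)\}=\sup_{i\in I}d_T^i(a_i,b_i).
\]
The main obstacle is the decoupling identity $M(a/b)=\sup_i M(a_i/b_i)$: one must argue via the up-ray structure of the sets $S_i$ rather than by manipulating attained infima, so that no closedness of the cones is needed, and one must keep track of finiteness, which is guaranteed because $b$ is an order unit. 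Everything after that is bookkeeping.
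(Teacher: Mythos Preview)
Your proof is correct and follows essentially the same approach as the paper: both reduce the statement to the decoupling identity $M(a/b)=\sup_{i\in I}M(a_i/b_i)$, from which the Thompson's metric formula follows. The paper asserts this identity in a single displayed line without further comment, whereas you supply the details (the up-ray structure of the $S_i$, finiteness of the $M(a_i/b_i)$, and the $\max$/$\sup$/$\log$ bookkeeping), but the underlying argument is the same.
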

\begin{proof}
The proposition follows immediately from
\[
M_A(a/b) = \inf \{ \lambda > 0 \colon a_i\leq \lambda b_i \mbox{ for all } i \in I\}
= \sup_{i \in I} \inf \{ \lambda > 0\colon a_i\leq \lambda b_i \}
= \sup_{i \in I} M_{A_i}(a_i, b_i).
\]
\end{proof}

With the above preparations we can now obtain the following theorem.
The proof, as well as the statement, is a direct generalization of \cite[Section 4]{Bo} and \cite[Theorem 9]{Mol1}.
\begin{theorem}\label{t:thompson_isometry}
Let $A$ and $B$ be unital JB-algebras. A map $f \colon \CA \to \CB$ is a bijective Thompson's metric isometry if and only if there exist $b \in B_+^\circ$, a central projection $p \in B$, and a Jordan isomorphism $J \colon A \to B$ such that $f$ is of the form
\[ f(a) = U_b (pJa + p^\perp J a^{-1}) \mbox{\quad for all } a \in \CA. \]
In this case $b = f(e)^{\frac{1}{2}}$.
\end{theorem}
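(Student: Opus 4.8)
The plan is to prove the two implications separately, with the forward (characterization) direction carrying the weight, since the conceptual heavy lifting has already been packaged into Theorem~\ref{t:inducing_isometry} and Theorem~\ref{t:jbisoms}.

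For the \emph{if} direction I would simply exhibit $f$ as a composite of maps already known to be Thompson isometries. The quadratic representation $U_b$ is an order isomorphism and hence a $d_T$-isometry, the Jordan isomorphism $J$ is a $d_T$-isometry by Corollary~\ref{orderisoms}, and inversion $a \mapsto a^{-1}$ is a $d_T$-isometry as verified after Proposition~\ref{p:hilbert_thompson_metrics}. Since $p$ is central, $B$ splits as the product $pB \times p^\perp B$, so by Proposition~\ref{p:thompson_product} the map $a \mapsto pJa + p^\perp Ja^{-1}$, which acts as $J$ on the first factor and as $J$ followed by inversion on the second, is a $d_T$-isometry; post-composing with $U_b$ gives the claim, and bijectivity is clear.

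For the \emph{only if} direction the first step is to normalize. Setting $b := f(e)^{\frac{1}{2}} \in \CB$, the map $g := U_{b^{-1}} \circ f$ is again a bijective Thompson isometry, and $g(e) = U_{b^{-1}} U_b e = e$ by \eqref{e:U_facts}. Now Theorem~\ref{t:inducing_isometry}(a) applies: the map $S \colon A \to B$ given by $Sa = \log g(\exp a)$ is a bijective linear $\norm{\cdot}$-isometry. By the Isidro--Rodr\'iguez-Palacios theorem (Theorem~\ref{t:jbisoms}) I can then write $S = sJ$, where $s$ is a central symmetry in $B$ and $J \colon A \to B$ is a Jordan isomorphism.

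The remaining step, which I expect to be the main obstacle, is to translate the linear picture back to the cone and recognize the inversion on the appropriate summand. Writing $s = p - p^\perp$ with $p := (s+e)/2$ a central projection, I would compute, for $a \in \CA$,
\[
g(a) = \exp\!\big(S \log a\big) = \exp\!\big(p\,J(\log a) - p^\perp J(\log a)\big).
\]
Because $p$ is central, $B$ decomposes as $pB \times p^\perp B$ and the exponential acts componentwise; using that $J$ commutes with the continuous functional calculus (so $J\log a = \log Ja$, and $\exp$ intertwines with $J$) together with $\exp(-c) = (\exp c)^{-1}$ and $J(a^{-1}) = (Ja)^{-1}$, the two components collapse to
\[
g(a) = p\,Ja + p^\perp J(a^{-1}).
\]
Finally $f = U_b \circ g$ yields $f(a) = U_b\big(pJa + p^\perp Ja^{-1}\big)$, with $b = f(e)^{\frac{1}{2}}$ by construction. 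The delicate point is the bookkeeping on the central summand: the sign flip produced by the symmetry $s$ on the $p^\perp$-part must be shown to correspond \emph{exactly} to the inversion $a \mapsto a^{-1}$ there, and this is where the centrality of $p$ (giving the product splitting of both $B$ and the exponential) and the compatibility of $J$ with the functional calculus are essential.
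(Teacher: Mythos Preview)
Your proposal is correct and follows essentially the same route as the paper: normalize via $g = U_{f(e)^{-1/2}} f$, linearize with Theorem~\ref{t:inducing_isometry}, apply Theorem~\ref{t:jbisoms} to write $S = (p - p^\perp)J$, and then translate back through the exponential. The only cosmetic difference is that the paper carries out the final step by expanding $\exp((p-p^\perp)Ja)$ as a power series and splitting the sum according to the parity of $(p-p^\perp)^n = p + (-1)^n p^\perp$, whereas you invoke the product splitting $B = pB \times p^\perp B$ and the compatibility of $J$ with the functional calculus; these are two packagings of the same computation.
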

\begin{proof}
The last statement follows from taking $a=e$, which yields $b^2 = f(e)$.

For the sufficiency, note that the central projection $p$ yields a decomposition $B = pB \oplus p^\perp B$, which is left invariant by $U_b$. This decomposition can be pulled back by $J$, which yields the following representation of the map $f \colon (J^{-1}pB)_+^\circ \times (J^{-1}p^\perp B)_+^\circ \to (pB)_+^\circ \times (p^\perp B)_+^\circ$:
\[ f(a_1, a_2) = (U_{b} J a_1, U_{b}J a_2^{-1}). \]
Note that a Jordan isomorphism is an order isomorphism and hence an isometry under Thompson's metric. The inversion and the quadratic representations also preserve  Thompson's metric, and so Thompson's metric is preserved on both parts. By Proposition~\ref{p:thompson_product} Thompson's metric is preserved on the product as well.

Now suppose that $f\colon \CA \to \CB$ is a bijective Thompson's metric isometry. Defining $g(a) := U_{f(e)^{-\frac{1}{2}}} f(a)$, we obtain that $g$ is a Thompson's metric isometry mapping $e$ to $e$. By Theorem~\ref{t:inducing_isometry} the map $S \colon A \to B$ defined by
\[ Sa := \log g( \exp(a)) \]
is a bijective linear $\norm{\cdot}$-isometry.

From Theorem~\ref{t:jbisoms} it follows that there is a central projection $p \in B$ and a Jordan isomorphism $J \colon A \to B$ such that $Sa = (p-p^\perp)Ja$. We now have for $a \in A$,
\begin{align*}
g(\exp(a)) &= \exp(Sa) = \exp((p-p^\perp)Ja) \\
&= \sum_{n=0}^\infty \frac{(p-p^\perp)^n (Ja)^n}{n!} \\
&= \sum_{n=0}^\infty \frac{(p + (-1)^n p^\perp) J(a^n)}{n!} \\
&= p J \left( \sum_{n=0}^\infty \frac{a^n}{n!} \right) + p^\perp J \left( \sum_{n=0}^\infty \frac{(-a)^n}{n!} \right) \\
&= pJ( \exp(a)) + p^\perp J(\exp(-a)).
\end{align*}
It follows that, for $a \in \CA$, $g(a) = pJa + p^\perp Ja^{-1}$. The theorem now follows from
\[ f(a) = U_{f(e)^{\frac{1}{2}}} U_{f(e)^{-\frac{1}{2}}} f(a) = U_{f(e)^{\frac{1}{2}}} g(a). \]
\end{proof}

\subsection{The Thompson's metric isometry group of a JB-algebra}
In the case where a JB-algebra is the direct product of simple JB-algebras, we can explicitly compute its Thompson's metric  isometry group in terms of the Jordan automorphism groups of the simple components. Each Euclidean Jordan algebra satisfies this requirement, and the automorphism groups of the simple Euclidean Jordan algebras are known, see \cite{FK}.

\begin{theorem}\label{t:d_T_isom_group_JB}
Suppose a JB-algebra $A$ can be decomposed as a direct product
\[ A = \prod_{i \in I} A_i^{n_i}, \]
where $I$ is an index set, the $n_i$ are arbitrary cardinals and the $A_i$ are mutually nonisomorphic simple JB-algebras. Then the Thompson's metric isometry group of $A$ equals
\[ \Isom(A_+^\circ, d_T) = \prod_{i \in I}  \left( \Aut(A_{i+}) \rtimes C_2\right)^{n_i}  \rtimes S(n_i), \]
where $\Aut(A_{i+})$ denotes the automorphism group of the cone $A_{i+}$, i.e., the order isomorphisms of $A_i$ into itself, $C_2$ denotes the cyclic group of order 2 generated by the inverse map $\iota$, and $S(n_i)$ denotes the group of permutations of $n_i$.
\end{theorem}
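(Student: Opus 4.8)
The plan is to read the group structure directly off the normal form provided by Theorem~\ref{t:thompson_isometry}. Every bijective Thompson isometry $f$ of $A_+^\circ$ has the form $f(a) = U_b(pJa + p^\perp Ja^{-1})$ with $b = f(e)^{\frac{1}{2}} \in A_+^\circ$, a central projection $p$, and a Jordan automorphism $J$ of $A$; writing $\iota$ for the inverse map and $\iota_p$ for the partial inversion $x \mapsto px + p^\perp x^{-1}$ on $p^\perp A$, this reads $f = U_b \circ \iota_p \circ J$. First I would record that this decomposition is unique: $b$ is forced, and the induced linear isometry $Sa = \log\big(U_{b^{-1}}f(\exp a)\big)$ equals $(p-p^\perp)J$, a factorization into a central symmetry times a Jordan automorphism that is unique because a central symmetry $s$ which is itself a Jordan automorphism satisfies $s^2 = s$, hence $s = e$. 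Thus $f \mapsto (b,p,J)$ is a bijection onto its admissible range, and the theorem reduces to identifying that range and the resulting composition law.

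Next I would establish the two structural facts that force the block/wreath shape. Since the $A_i$ are mutually non-isomorphic simple JB-algebras, each copy is a minimal nonzero closed ideal of $A$, and Jordan automorphisms permute the minimal closed ideals; as $A_i \not\cong A_j$ for $i\neq j$, any $J$ preserves each block $A_i^{n_i}$ and acts on it through a permutation $\sigma_i \in S(n_i)$ of the copies together with per-copy Jordan automorphisms $J_{i,k}\in \Aut(A_i)$, so that $\Aut\big(\prod_i A_i^{n_i}\big) = \prod_i \Aut(A_i)\wr S(n_i)$. Because each simple $A_i$ is a factor, its center is $\R e_i$, whence the center of $A$ is $\prod_i \R^{n_i}$ and the central projections are exactly the $\{0,e_i\}$-valued tuples; equivalently $\iota_p$ inverts on an arbitrary subset of copies, contributing one sign $\epsilon_{i,k}\in\{\pm 1\}$ per copy, while $U_b$ acts componentwise through $b=(b_{i,k})$ with $b_{i,k}\in A_{i+}^\circ$. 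All three ingredients respect the block decomposition, so by Proposition~\ref{p:thompson_product} the isometry splits and $\Isom(A_+^\circ,d_T) = \prod_i \Isom\big((A_i^{n_i})_+^\circ,d_T\big)$, reducing everything to a single block.

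For one block I would argue as follows. When $n_i=1$ the only central projections are $0,e_i$, so $f$ is either an order isomorphism $U_bJ \in \Aut(A_{i+})$ (Proposition~\ref{p:order_isomorphism}) or such a map post-composed with $\iota$; since $\iota \notin \Aut(A_{i+})$ (inversion reverses the order) and $\iota(U_bJ)\iota = U_{b^{-1}}J$ by \eqref{e:U_facts}, the order isomorphisms form an index-two normal subgroup, giving $\Isom(A_{i+}^\circ,d_T) = \Aut(A_{i+})\rtimes C_2$ with $C_2 = \langle\iota\rangle$ acting via $U_bJ \mapsto U_{b^{-1}}J$. For general $n_i$, tracing the components of $f=U_b\iota_pJ$ shows that on the $l$-th output copy $f$ acts on the $\sigma_i^{-1}(l)$-th input copy by $x \mapsto U_{b_l}J_{\sigma_i^{-1}(l)}(x^{\epsilon_l})$, so $f$ is encoded by $\sigma_i \in S(n_i)$ together with an element of $\Aut(A_{i+})\rtimes C_2$ on each copy, which is precisely the wreath product $(\Aut(A_{i+})\rtimes C_2)^{n_i}\rtimes S(n_i)$.

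The last and most delicate step is verifying that the bijection $f \mapsto \big(\sigma_i,(U_{b_{i,k}}J_{i,k},\epsilon_{i,k})\big)$ is a group isomorphism, and this is where I expect the real work. Composing $f=U_b\iota_pJ$ with $f'=U_{b'}\iota_{p'}J'$ and pushing the result back into normal form requires the relations $JU_c = U_{Jc}J$ and $\iota U_c = U_{c^{-1}}\iota$ together with the identities \eqref{e:U_facts}, and one must check that the data combine exactly according to wreath-product multiplication, in particular that the $C_2$-conjugation $U_bJ\mapsto U_{b^{-1}}J$ and the $S(n_i)$-action on the base group emerge correctly. A secondary point to flag is the meaning of the (possibly infinite) products and powers: since $b$ must be a global order unit of $A$, the tuples $(b_{i,k})$ are uniformly bounded above and below, so the base groups should be read as the corresponding order-unit-bounded products rather than unrestricted ones; for finite $I$ and finite $n_i$ this distinction vanishes and the remaining argument is purely combinatorial.
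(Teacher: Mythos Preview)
Your proposal is correct and follows essentially the same route as the paper: invoke Theorem~\ref{t:thompson_isometry} to write each isometry as $U_b\,\iota_p\,J$, observe that $J$ preserves the isomorphism classes $A_i^{n_i}$ while $U_b$ and $\iota_p$ act componentwise, reduce via Proposition~\ref{p:thompson_product} to a single block, and then read off the wreath-product structure from the conjugation identity $\iota\,U_bJ\,\iota^{-1}=U_{b^{-1}}J$ together with the natural action of $S(n_i)$ on the componentwise operators. The paper's argument is terser---it simply asserts that componentwise operators conjugated by a permutation yield the permuted componentwise operators---whereas you spell out the uniqueness of the $(b,p,J)$ decomposition and the minimal-ideal reason Jordan automorphisms respect the blocks; these are welcome clarifications but not new ideas. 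Your caveat about the meaning of the infinite products (the $b_{i,k}$ must be uniformly bounded above and below for $b$ to lie in $A_+^\circ$) is well taken and is a subtlety the paper's statement glosses over; in the finite case treated in the subsequent Remark this distinction indeed disappears.
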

\begin{proof}
By Theorem~\ref{t:thompson_isometry} any bijective Thompson's metric isometry is a composition of a quadratic representation, a Jordan isomorphism and taking inverses on some components. Quadratic representations and taking inverses leave each component invariant, and Jordan isomorphisms leave the Jordan isomorphism classes invariant. This shows that
\[
\mathrm{Isom}(A_+^\circ, d_T) \subseteq \prod_{i \in I} \mathrm{Isom}((A^{n_i}_i)^\circ_+, d_T),
\]
and the other inclusion follows from Proposition~\ref{p:thompson_product}, so we have equality. We will now investigate $\mathrm{Isom}((A^{n_i}_i)^\circ_+, d_T)$.

A Jordan isomorphism of $A^{n_i}_i$ may permute the components, so it follows that each Thompson's metric isometry of $(A^{n_i}_i)^\circ_+$ is a composition of a permutation of components, a componentwise possible inversion, a componentwise Jordan isomorphism, and a componentwise quadratic representation. So, all the operators except the permutation will act componentwise, and the componentwise operators form a subgroup. It is easy to compute that a componentwise operator conjugated by a permutation $\pi$ equals the componentwise operator permuted by $\pi$. This shows that the componentwise operators and the permutation group form a semidirect product, where the componentwise operators are the normal subgroup. It remains to examine the componentwise operators.

By Proposition~\ref{p:order_isomorphism}, any order isomorphism is the product of a quadratic representation and a Jordan isomorphism. If we denote the inverse map by $\iota = \iota^{-1}$, then conjugating an order isomorphisms with the inverse map gives
\begin{equation}\label{e:action_inverse}
 (\iota U_b J \iota^{-1}) (a) = (U_b Ja^{-1})^{-1} = U_{b^{-1}} (Ja^{-1})^{-1} = U_{b^{-1}} Ja,
\end{equation}
which yields another order isomorphism. So, the product of the group of order isomorphism and the inversion group $C_2$ is a semidirect product, where the order isomorphisms form the normal subgroup. We conclude that
\[
\mathrm{Isom}(A_+^\circ, d_T)=\prod_{i \in I} \mathrm{Isom}((A^{n_i}_i)^\circ_+, d_T)\cong\prod_{i \in I} \left( \mathrm{Aut}(A_{i+}) \rtimes C_2\right)^{n_i}  \rtimes S(n_i).
\]
\end{proof}

\begin{remark}
If $A$ is a JB-algebra as given in the above theorem, then we can use an analogous argument to show that the automorphism group of the cone $A_+$ equals
\[
\mathrm{Aut}(A_+)=\prod_{i\in I}\mathrm{Aut}(A_{i+}^{n_i})=\prod_{i \in I} \mathrm{Aut}(A_{i+})^{n_i} \rtimes S(n_i).
\]
Furthermore, for any $i\in I$ the conjugation action \eqref{e:action_inverse} on an order isomorphism in $\mathrm{Aut}(A_{i+}^{n_i})$ also shows that
\[
\mathrm{Isom}((A_i^{n_i})_+^\circ,d_T)\cong\mathrm{Aut}(A_{i+}^{n_i})\rtimes C_2^{n_i},
\]
so we can write the isometry group as
\[
\mathrm{Isom}(A_+^\circ,d_T)\cong\prod_{i\in I}\mathrm{Aut}(A_{i+}^{n_i})\rtimes C_2^{n_i}.
\]
It follows that the automorphism group $\mathrm{Aut}(A_+)$ is normal in $\mathrm{Isom}(A_+^\circ,d_T)$, and its quotient is isomorphic to $\prod_{i\in I}C_2^{n_i}$.
Suppose now that both $I$ and $n_i$ are finite (i.e., $A$ is a Euclidean Jordan algebra). Then the index of the automorphism group in the isometry group for Thompson's metric is $2^m$, where $m = \sum_{i \in I} n_i$ is the total number of different components. This is a correction of \cite[Remark~4.9]{Bo}, which has the wrong index.
\end{remark}

\section{Hilbert's metric isometries of JBW-algebras}\label{sec:hilbert}

If $A$ and $B$ are JB-algebras and $f \colon \ol{A}_+^\circ \to \ol{B}_+^\circ$ is a bijective Hilbert's metric isometry mapping $\ol{e}$ to $\ol{e}$, then by Theorem \ref{t:inducing_isometry} the map $S \colon [A] \to [B]$ defined by, $S[a] := \log f(\exp([a]))$, is a bijective linear $\norm{\cdot}_v$-isometry. Every bijective linear isometry maps extreme points of the unit ball to extreme points of the unit ball, which is what we will exploit here. Let us first identify these extreme points. For JBW-algebras this is \cite[Proposition~2.2]{Ham}.

\begin{lemma}\label{l:ext} The extreme points of the unit ball in $([A], \norm{\cdot}_v)$ are the equivalence classes $[p]$, where $p\in A$ is a nontrivial projection.
\end{lemma}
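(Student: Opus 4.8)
The plan is to identify the unit ball $B=\{[a]\in[A]\colon\|[a]\|_v\le 1\}$ with the image of the order interval $[0,e]$ under the quotient map and then to reduce the statement to the classical fact that the projections are precisely the extreme points of $[0,e]$ in a JB-algebra. First I would record the key normalization: since $\|[a]\|_v=\max\sigma(a)-\min\sigma(a)$, every class with $\|[a]\|_v\le1$ has a unique representative $a$ with $\min\sigma(a)=0$, and this representative then satisfies $0\le a\le e$; conversely every $a\in[0,e]$ gives $\|[a]\|_v\le1$. As extreme points of a unit ball always have norm $1$, the canonical representative $a$ of an extreme point satisfies $0\le a\le e$ with $0,1\in\sigma(a)$.

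For the easier inclusion, I would show that $[p]$ is extreme whenever $p$ is a nontrivial projection. Suppose $[p]=\tfrac12([a]+[b])$ with $a,b$ the canonical representatives in $[0,e]$. Then $\tfrac12(a+b)=p-\lambda e$ for some $\lambda\in\R$, and comparing spectra the requirement $0\le p-\lambda e\le e$ forces $\lambda=0$, so $\tfrac12(a+b)=p$. It remains to invoke that $p$ is an extreme point of $[0,e]$: applying $U_{p^\perp}$ to $a+b=2p$ and using $U_{p^\perp}p=0$ together with positivity of $U_{p^\perp}$ gives $U_{p^\perp}a=U_{p^\perp}b=0$; by \cite[Proposition~1.3.8]{AS} (the relation already used in the proof of Theorem~\ref{Thom_unique}) this yields $U_pa=a$ and $U_pb=b$, and since $U_p(e-a)=p-a\ge0$ we get $a\le p$ and similarly $b\le p$, whence $(p-a)+(p-b)=0$ with both summands positive forces $a=b=p$. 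Thus $[a]=[b]=[p]$.

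For the converse I would argue by contradiction using the continuous functional calculus $\mathrm{JB}(a,e)\cong C(\sigma(a))$. Let $[a]$ be extreme with canonical representative $a\in[0,e]$ and $0,1\in\sigma(a)$. If $a$ is not a projection then $\sigma(a)\cap(0,1)\ne\varnothing$, and I would choose a continuous function $h$ on $\sigma(a)$ that vanishes at $0$ and $1$, is nonzero at some $\lambda\in\sigma(a)\cap(0,1)$, and satisfies $|h(t)|\le\min\{t,1-t\}$. Then $h(a)$ obeys $0\le a\pm h(a)\le e$ and is non-constant on $\sigma(a)$, so $h(a)\notin\Span(e)$; consequently $[a]=\tfrac12([a+h(a)]+[a-h(a)])$ is a nontrivial convex combination of two \emph{distinct} elements of $B$, contradicting extremality. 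Hence $a$ is a projection, and it is nontrivial because $0,1\in\sigma(a)$.

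The hard part will be bookkeeping the quotient by $\Span(e)$ in both directions, since convexity and extremality must be tracked in $[A]$ rather than in $A$. In the first inclusion this surfaces as the spectral argument forcing $\lambda=0$, so that the quotient decomposition actually lifts to a decomposition inside $[0,e]$; in the converse it surfaces as the need for the perturbation $h(a)$ to be non-constant on $\sigma(a)$, which is precisely why $h$ is chosen to vanish at the spectral endpoints $0$ and $1$ (where $|h|\le\min\{t,1-t\}$ already forces $h=0$) while being nonzero at an interior spectral value. Everything else is functional calculus and the standard Peirce machinery already available in the excerpt.
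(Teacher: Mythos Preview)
Your proof is correct and follows essentially the same route as the paper: normalize representatives into $[0,e]$, force the scalar shift $\lambda=0$ by a spectral argument, reduce to extremality of projections in $[0,e]$, and for the converse perturb via the functional calculus $\mathrm{JB}(a,e)\cong C(\sigma(a))$. The only noteworthy difference is that the paper invokes \cite[Lemma~2.23]{AS} for the extremality of projections in $[0,e]$, whereas you supply the argument directly via $U_{p^\perp}$ and \cite[Proposition~1.3.8]{AS}; both are standard, and your use of the midpoint characterization rather than a general $t\in(0,1)$ is immaterial.
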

\begin{proof} Let $p\in A$ be a nontrivial projection and suppose that $[p]=t[a]+(1-t)[b]$ for some $0<t<1$, and
$[a], [b]\in [A]$ with $\|[a]\|_v=\|[b]\|_v=1$. There exist $\lambda\in\mathbb{R}$, $a\in[a]$, and $b\in[b]$ such that $p=ta+(1-t)b+\lambda e$ and
\[
\{0,1\}\subseteq\sigma(a),\sigma(b)\subseteq[0,1].
\]
This implies that
\[
\{-\lambda,1-\lambda\}=\sigma(p)-\lambda=\sigma(p-\lambda e)=\sigma(ta+(1-t)b)\subseteq[0,\|ta+(1-t)b\|]\subseteq[0,1],
\]
from which we conclude that $\lambda=0$. By the same argument as in \cite[Lemma 2.23]{AS}, the extreme points of those elements $a \in A$ with $\sigma(a)\subseteq[0,1]$ are  projections. So, $p=a=b$, and hence $[p]=[a]=[b]$, which shows that $[p]$ is an extreme point of the unit ball in $([A],\|\cdot\|_v)$.

Conversely, if $[a]\in[A]$ with $\|[a]\|_v=1$  does not contain a projection, then a representative $a$ with $\sigma(a)\subseteq[0,1]$ must have $\lambda\in\sigma(a)$ with $0<\lambda<1$. Now consider JB$(a,e) \cong C(\sigma(a))$. By elementary topology there exists a nonnegative function $g\in C(\sigma(a))$ with $g\neq 0$ such that the ranges of  $a+g$ and $a-g$ are contained in $[0,1]$.  Since $a=\frac{1}{2}(a-g)+\frac{1}{2}(a+g)$, it follows that $[a]$ can be written as $\frac{1}{2}([b]+[c])$ with $[b]\neq [c]$ and $\|[b]\|_v =\|[c]\|_v =1$, and hence $[a]$ cannot be an extreme point of the unit ball.
\end{proof}

To be able to exploit the extreme points  we will  restrict ourselves to cones in  JBW-algebras, as JB-algebras may not have nontrivial projections, e.g.\ $C([0,1])$. For a JBW-algebra $M$ we will denote its set of projections by $\P(M)$.

Let $M$ be a JBW-algebra. By Lemma \ref{l:ext} we can define a map $\theta \colon \P(M) \to \P(N)$ by letting $\theta(0)=0$, $\theta(e)=e$, and $\theta(p)$ be the unique nontrivial projection in the class $S[p]$, otherwise. Thus, for each bijective Hilbert's metric isometry $f\colon \ol{M}_+^\circ\to\ol{N}_+^\circ$ with $f(\ol{e})=\ol{e}$, we get a bijection $\theta\colon\P(M)\to\P(N)$. We say that $\theta$ is {\em induced} by $f$.  Note that its inverse $\theta^{-1}$ is induced by $f^{-1}$. The map $\theta$ will be the key in understanding $f$.

We call a bijection $\theta\colon \P(M)\to\P(N)$ an {\em orthoisomorphism} if $p,q\in\P(M)$ are orthogonal if and only if $\theta(p)$ and $\theta(q)$ are orthogonal. Our goal will be to prove that the map $\theta$ induced by either $f$ or $\iota \circ f$, where $\iota(\ol{a}) =\ol{a}^{-1}$ is the inversion, is in fact an orthoisomorphism. For this we need to investigate certain unique geodesics starting from the unit $e$.

We introduce the following notation: $(\ol{a},\ol{b})$ denotes the open line segment $\{t\ol{a} + (1-t)\ol{b}: 0 < t < 1\}$ in $\ol{M}_+$ for $\ol{a},\ol{b}\in \ol{M}_+$. The segments $[\ol{a},\ol{b}]$ and $[\ol{a},\ol{b})$ are defined similarly. Furthermore, we denote the affine span of a set $S$ by $\mathrm{aff}\,(S)$.
\begin{lemma}\label{ksimplices}
If $p_1,\ldots,p_k$ are nontrivial projections in a JBW-algebra $M$ such that $p_1+\cdots +p_k=e$, then the boundary of $\mathrm{conv}(p_1,\ldots,p_k)$ is contained in $\partial M_+$ and so
\[
\mathrm{aff}(p_1,\ldots,p_k)\cap M_+= \mathrm{conv}(p_1,\ldots,p_k),
\]
which is a $(k-1)$-dimensional simplex. Moreover, for each $a\in \mathrm{conv}(p_1,\ldots,p_k)\cap M_+^\circ$ the segment $[\ol{a},\ol{p}_i)$ is a unique geodesic in $(\ol{M}_+^\circ,d_H)$ for all $i=1,\ldots,k$.
\end{lemma}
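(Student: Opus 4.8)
The plan is to reduce everything to the commutative subalgebra generated by the $p_i$. The first step I would take is to observe that the hypothesis $p_1+\cdots+p_k=e$ already forces the projections to be mutually orthogonal. Indeed, for $i\neq j$ we have $e-(p_i+p_j)=\sum_{l\neq i,j}p_l\in M_+$, so $p_j\leq e-p_i=p_i^\perp$. Since $U_{p_i}$ is a positive map by \eqref{e:U_facts}, applying it to $0\leq p_j\leq p_i^\perp$ and using $U_{p_i}p_i^\perp=U_{p_i}e-U_{p_i}p_i=p_i-p_i=0$ (again from \eqref{e:U_facts} together with $p_i$ being a projection) yields $0\leq U_{p_i}p_j\leq 0$, hence $U_{p_i}p_j=0$, i.e.\ $p_i\perp p_j$. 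Consequently the unital subalgebra generated by $p_1,\ldots,p_k$ is associative and isomorphic to $C(\{1,\ldots,k\})\cong\R^k$, with $\sum_j\lambda_j p_j$ corresponding to $(\lambda_1,\ldots,\lambda_k)$ and $\sigma(\sum_j\lambda_jp_j)=\{\lambda_1,\ldots,\lambda_k\}$ through the functional calculus.

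Parts one and two then follow immediately from this identification. For a general point $a=\sum_j\lambda_jp_j$ of $\mathrm{aff}(p_1,\ldots,p_k)$, i.e.\ with $\sum_j\lambda_j=1$, we have $a\in M_+$ if and only if $\sigma(a)\subseteq[0,\infty)$, if and only if every $\lambda_j\geq 0$; combined with $\sum_j\lambda_j=1$ this describes exactly $\mathrm{conv}(p_1,\ldots,p_k)$, which gives the identity $\mathrm{aff}\cap M_+=\mathrm{conv}$. A boundary point of the simplex has some $\lambda_j=0$, hence $0\in\sigma(a)$, so $a$ is not invertible and lies in $\partial M_+$. Finally, the $p_j$ are linearly independent (apply $U_{p_j}$ to a vanishing linear combination), hence affinely independent, so $\mathrm{conv}(p_1,\ldots,p_k)$ is genuinely $(k-1)$-dimensional.

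For part three I would fix $a=\sum_j\lambda_jp_j\in\mathrm{conv}(p_1,\ldots,p_k)\cap M_+^\circ$, so all $\lambda_j>0$, and fix an index $i$. The straight segment $[\ol{a},\ol{p}_i)$ is a geodesic because straight segments are $d_H$-geodesics and, as a short estimate of the spectral values below shows, its Hilbert length to $\ol{p}_i$ is infinite. To prove uniqueness I apply Theorem~\ref{t:unique_hilbert_geodesics} to interior subsegments: writing $c=(1-s)a+sp_i$ and $d=(1-t)a+tp_i$ with $0\leq s<t<1$, both lie in the commutative subalgebra, so $U_{c^{-\frac12}}d=c^{-1}d$ has spectral values $\frac{1-t}{1-s}$ (arising from every index $j\neq i$) and $\frac{(1-t)\lambda_i+t}{(1-s)\lambda_i+s}$ (from index $i$). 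A direct check shows these two positive numbers coincide precisely when $s=t$, so for $s<t$ the set $\sigma(U_{c^{-\frac12}}d)$ has exactly two elements. By Theorem~\ref{t:unique_hilbert_geodesics} there is then a unique geodesic between $\ol{c}$ and $\ol{d}$, which must be the straight one; as this holds for every interior pair on $[\ol{a},\ol{p}_i)$, the whole segment is the unique geodesic.

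The step requiring the most care is the interface with Theorem~\ref{t:unique_hilbert_geodesics}, which is stated only for pairs of \emph{interior} points, whereas the endpoint $\ol{p}_i$ of the segment lies on $\partial M_+$. I would handle this by establishing uniqueness on every subsegment $[\ol{a},\ol{c}]$ with $\ol{c}\in(\ol{a},\ol{p}_i)$ and then passing to the union, so the boundary point never enters the hypothesis of the theorem directly; the two-point spectral computation is what makes this reduction work and is the technical heart of the argument. The other essential (though routine) preliminary is the mutual orthogonality of the $p_i$, since without it the reduction to the associative algebra, and hence all of the spectral computations, would be unavailable.
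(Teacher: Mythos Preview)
Your proof is correct and follows essentially the same approach as the paper: establish pairwise orthogonality of the $p_i$, reduce to the associative subalgebra they generate, read off the spectrum to handle the boundary/affine hull statements, and compute a two-element spectrum to invoke Theorem~\ref{t:unique_hilbert_geodesics}. The only minor difference is in the geodesic part: the paper computes $\sigma(U_{a^{-\frac12}}b_i)=\{\tfrac12,\tfrac{1+\mu_i^{-1}}{2}\}$ for the single midpoint $b_i=\tfrac12(a+p_i)$ and invokes the theorem once, whereas you compute the spectrum for an arbitrary pair $c,d$ on the segment and argue that uniqueness on all interior subsegments yields uniqueness of the whole half-open segment---a more explicit handling of the boundary endpoint, but the same idea.
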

\begin{proof}
As $p_1+\cdots +p_k=e$, it follows from \cite[Proposition~2.18]{AS} that the $p_i$ are pairwise orthogonal. So,
\[
0\in\sigma( \lambda_1p_1+\cdots +\lambda_kp_k)\mbox{\quad for }\lambda_1+\cdots+\lambda_k =1\mbox{ and }0\leq \lambda_i\leq 1\mbox{ for all }i=1,\ldots,k
\]
if and only if $\prod_{i=1}^k \lambda_i=0$. Hence the relative boundary of $\mathrm{conv}(p_1,\ldots,p_k)$ in $\mathrm{aff}(p_1,\ldots,p_k)$ lies in $\partial M_+$, which  proves the equality.

Note that if $a= \mu_1p_1+\cdots+\mu_kp_k$ with $\mu_1+\cdots+\mu_k =1$ and $0< \mu_i< 1$ for all $i=1,\ldots,k$, then $a^{-\frac{1}{2}}= \mu_1^{-\frac{1}{2}}p_1+\cdots+\mu_k^{-\frac{1}{2}}p_k$. Now let $b_i:=\frac{1}{2}(a+p_i)$. Then
\[
U_{a^{-\frac{1}{2}}} b_i = \frac{1}{2}(U_{a^{-\frac{1}{2}}} a+U_{a^{-\frac{1}{2}}} p_i) =\frac{1}{2}(e+\mu_i^{-1}p_i),
\]
and hence $\sigma(U_{a^{-\frac{1}{2}}} b_i)=\{\frac{1}{2},\frac{1+\mu_i^{-1}}{2}\}$. So, it follows from Theorem \ref{t:unique_hilbert_geodesics} that  $[\ol{a},\ol{p}_i)$ is a unique geodesic in $(\ol{M}_+^\circ,d_H)$ for all $i=1,\ldots,k$.
\end{proof}

\begin{lemma}\label{l:proj_to_proj}Let $M$ and $N$ be JBW-algebras and $f\colon \ol{M}_+^\circ\to\ol{N}_+^\circ$ be a bijective Hilbert's metric isometry with $f(\ol{e})=\ol{e}$. Let $p\in\mathcal{P}(M)$ be nontrivial. The geodesic segment $[\ol{e},\ol{p})$ is mapped to the geodesic segment $[\ol{e},\ol{q})$ by $f$ for some $q\in\mathcal{P}(N)$. Moreover, $S[p]=[q]$ and so $\theta(p)=q$.
\end{lemma}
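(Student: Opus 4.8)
The plan is to compute $f$ on the segment $[\ol e,\ol p)$ directly through the induced linear map $S$ of Theorem~\ref{t:inducing_isometry}, exploiting the fact that, although $[\ol e,\ol p)$ is a straight line in $\ol M_+^\circ$, its image under $\Log$ is a ray in $[M]$, which is exactly what turns the nonlinear isometry $f$ into the linear isometry $S$.

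First I would record that both segments in the statement are genuine (unique) geodesics. Applying Lemma~\ref{ksimplices} with $k=2$, $p_1=p$, $p_2=p^\perp$ and the interior point $a=\tfrac12 e\in\mathrm{conv}(p,p^\perp)\cap M_+^\circ$, for which $\ol a=\ol e$, shows that $[\ol e,\ol p)$ is a unique geodesic in $(\ol M_+^\circ,d_H)$; the same lemma applied in $N$ to $q$ and $q^\perp$ shows $[\ol e,\ol q)$ is a unique geodesic as well. Hence it suffices to prove that $f$ maps the first set onto the second.

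The main step is the computation. Parametrize the segment by $c_t:=p+(1-t)p^\perp$, $t\in[0,1)$, so that $[\ol e,\ol p)=\{\ol{c_t}\}$. Since $p,p^\perp$ are orthogonal projections, the functional calculus gives $\log c_t=\log(1-t)\,p^\perp$, and because $[p^\perp]=[e-p]=-[p]$ in $[M]$, setting $s:=\log\tfrac1{1-t}\in[0,\infty)$ yields $[\log c_t]=s[p]$. By Theorem~\ref{t:inducing_isometry} we have $S[\log c]=\log f(\ol c)$ for all $\ol c\in\ol M_+^\circ$, so by linearity of $S$
\[
f(\ol{c_t})=\Exp\!\big(S[\log c_t]\big)=\Exp\!\big(sS[p]\big).
\]
Write $q:=\theta(p)$, so that $[q]=S[p]$ by definition of $\theta$; this is legitimate because $[p]$ is an extreme point of the unit ball of $([M],\norm{\cdot}_v)$, hence so is $S[p]$, and Lemma~\ref{l:ext} identifies it with a projection class. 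Evaluating the exponential then gives $f(\ol{c_t})=\ol{\exp(sq)}=\ol{q^\perp+e^s q}$.

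To finish, I would rescale: $\ol{q^\perp+e^s q}=\ol{e^{-s}q^\perp+q}=\ol{(1-u)e+uq}$ with $u=1-e^{-s}$. As $t$ runs over $[0,1)$, the parameter $s$ runs over $[0,\infty)$ and $u$ over $[0,1)$, so $f$ sends $[\ol e,\ol p)$ exactly onto $[\ol e,\ol q)$ for $q=\theta(p)$; since $[q]=S[p]$ by the choice of $q$, all three assertions of the lemma follow at once. I do not expect a serious obstacle: the one genuinely substantive idea (easily missed) is the observation that $\Log$ straightens this particular segment into the ray $\mathbb{R}_{\ge0}[p]$, converting $f$ into $S$; the remaining care is only the sign bookkeeping $[p^\perp]=-[p]$ in the quotient and checking that the two explicit parametrizations trace out the same subset of $\ol N_+^\circ$.
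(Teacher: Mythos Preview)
Your argument is correct and takes a more direct route than the paper. The paper argues geometrically: it first uses the uniqueness of the geodesic $[\ol e,\ol p)$ together with Theorem~\ref{t:unique_hilbert_geodesics} to deduce that any representative $b$ of a point on the image has two-point spectrum $\{\alpha,\beta\}$, so that $b-\alpha e$ is a scalar multiple of some projection $r$ and the image lies on $[\ol e,\ol r)$; it then matches Hilbert distances from $\ol e$ to pin down the parametrization $f(\ol{tp+(1-t)e})=\ol{tr+(1-t)e}$, and only at the end computes $S[p]$ via the spectral decomposition $p=1\cdot p+0\cdot p^\perp$ to identify $r$ with $\theta(p)$. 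You reverse the logic: you set $q:=\theta(p)$ at the outset (so $S[p]=[q]$ by definition of $\theta$), observe that $\Log$ straightens $[\ol e,\ol p)$ into the ray $\R_{\ge 0}[p]$, and compute $f(\ol{c_t})=\Exp(sS[p])=\ol{(1-u)e+uq}$ by functional calculus. This renders the unique-geodesic discussion you included superfluous --- it is correct, but your explicit formula already exhibits the image as $[\ol e,\ol q)$ without it. The paper's route has the mild conceptual byproduct that unit-fixing Hilbert isometries send two-point-spectrum rays to two-point-spectrum rays; yours is simply shorter.
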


\begin{proof}
The  geodesic segments $[\ol{e},\ol{p})$ is unique by Lemma~\ref{ksimplices}. Thus, $f([\ol{e},\ol{p}))$ is also a unique geodesic segments starting at $\ol{e}$, since $f(\ol{e})=\ol{e}$.

Now fix $0<t<1$ and let $b\in f(\ol{tp+(1-t)e})$. By Theorem  \ref{t:unique_hilbert_geodesics},
$\sigma(b) =\{\alpha,\beta\}$ with $\beta>\alpha>0$. Note that $b':=b-\alpha e\in\partial N_+\setminus\{0\}$.  Clearly, $\sigma(b')=\{0,\beta-\alpha\}$, and hence $b'\in [r]$ for some nontrivial projection $r\in \P(N)$. Note also that
\[
b = (1+\alpha)\left((1+\alpha)^{-1}b' +(1-(1+\alpha)^{-1})e\right),
\]
and hence the image of the $[\ol{e},\ol{p})$ under $f$ is $[\ol{e},\ol{r})$.

If $q$ is a nontrivial projection and $0< t<1$, then by using Proposition~\ref{p:hilbert_thompson_metrics} it is easy to verify that $d_H(tq+(1-t)e,e) =-\log (1-t)$. As $f$ is an isometry that fixes $\ol{e}$,  we find that
\begin{align}\label{e:geod}
f(\ol{tp+(1-t)e}) =\ol{tr+(1-t)e}
\end{align}
for all $0\le t<1$.
Using the spectral decomposition $p=1 p + 0 p^\perp$, we  now deduce that
\begin{align*}
S[p]&=\log f( \ol{\exp(1) p + \exp(0) p^\perp})=\log f(\ol{\exp(-1)e+(1-\exp(-1))p})\\&=\log(\ol{\exp(-1)e+(1-\exp(-1))r})=[\log(r+\exp(-1) r^\perp)] \\
&=[-r^\perp]=[r],
\end{align*}
and hence $q :=\theta(p)= r$.
\end{proof}

We can now show that $\theta$ preserves operator commuting projections.

\begin{proposition}\label{p:T_op_com}
If $p,q\in\mathcal{P}(M)$ operator commute, then $\theta(p),\theta(q)\in \mathcal{P}(N)$ operator commute.
\end{proposition}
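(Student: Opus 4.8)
The plan is to use the associative structure hidden in a pair of operator commuting projections, to transport it through $f$ by means of Lemma~\ref{l:proj_to_proj} and the preservation of geometric means, and to reduce the whole statement to a single orthogonality assertion about the images of an orthogonal family.

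First I would record the algebraic reduction. Since $p$ and $q$ operator commute, the unital Jordan subalgebra they generate is associative, so there are nonzero pairwise orthogonal projections $e_1,\ldots,e_k$ with $e_1+\cdots+e_k=e$ and index sets $I_p,I_q\subseteq\{1,\ldots,k\}$ such that $p=\sum_{i\in I_p}e_i$ and $q=\sum_{i\in I_q}e_i$. Writing $r_i:=\theta(e_i)$, I claim everything follows once the $r_i$ are shown to be pairwise orthogonal. Indeed, recall from Theorem~\ref{t:inducing_isometry} that $S[a]:=\log f(\exp([a]))$ is linear on $[M]$ and satisfies $S[e_i]=[r_i]$; hence $\sum_{i\in I_p}[r_i]=S[p]=[\theta(p)]$, so $\theta(p)$ and the projection $\sum_{i\in I_p}r_i$ differ by a multiple of $e$, and comparing spectra (both are nontrivial projections, with spectrum $\{0,1\}$) forces $\theta(p)=\sum_{i\in I_p}r_i$ and likewise $\theta(q)=\sum_{i\in I_q}r_i$. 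Thus $\theta(p)$ and $\theta(q)$ both lie in the associative algebra generated by the orthogonal family $\{r_i\}$ and therefore operator commute.

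Next I would assemble the geometric input needed to prove $r_ir_j=0$. Note that $\ol e=\ol{\tfrac1k(e_1+\cdots+e_k)}$ is the projective barycenter of the simplex $\mathrm{conv}(e_1,\ldots,e_k)$, so by Lemma~\ref{ksimplices} each spoke $[\ol e,\ol{e_i})$ is a unique geodesic, and by \eqref{e:geod} we have $f(\ol{te_i+(1-t)e})=\ol{tr_i+(1-t)e}$ for $0\le t<1$. The main tool is then Proposition~\ref{p:Hmidpoints}, $f(\ol a\#\ol b)=f(\ol a)\#f(\ol b)$. Fix $i\neq j$ and set $x_t:=\ol{te_i+(1-t)e}$, $y_t:=\ol{te_j+(1-t)e}$. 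Working in the associative algebra generated by the $e_l$, where the geometric mean is computed coordinatewise, one gets
\[
x_t\#y_t=\ol{\,(e_i+e_j)+\sqrt{1-t}\,\textstyle\sum_{l\neq i,j}e_l\,}\longrightarrow\ol{e_i+e_j}\qquad(t\to1).
\]
Applying $f$ and the two facts above, $f(x_t\#y_t)=\ol{tr_i+(1-t)e}\,\#\,\ol{tr_j+(1-t)e}$, while the left-hand side tends to $f(\ol{e_i+e_j})=\ol{\theta(e_i+e_j)}$, the ray of a projection. An explicit evaluation of this projective geometric mean as $t\to1$, via $U_{c^{1/2}}(\,\cdot\,)^{1/2}$ together with the Peirce decomposition with respect to $r_i$, shows that the limit is the ray of a projection exactly when $r_ir_j=0$; hence $r_i$ and $r_j$ are orthogonal. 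Ranging over all pairs $i\neq j$ and invoking the reduction above completes the proof.

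The main obstacle is precisely the orthogonality $r_ir_j=0$. It is tempting to argue from linearity alone: $\sum_i[r_i]=S[e]=0$ gives $\sum_i r_i=\lambda e$, and for a complementary pair the spectral comparison even yields $\theta(p^\perp)=\theta(p)^\perp$ (so $\lambda=1$ there). However, a scalar-valued sum of projections does \emph{not} force orthogonality — three non-orthogonal projections can sum to a multiple of $e$, as in a tight frame — so linearity is genuinely insufficient. The orthogonality must be extracted from the isometric structure, and the geometric-mean limit above is the step that does this; carrying out that limit carefully, rather than the surrounding bookkeeping, is the technical heart of the argument.
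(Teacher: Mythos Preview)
Your strategy is genuinely different from the paper's. The paper does \emph{not} pass through orthogonality of atoms at all: it applies Corollary~\ref{c:exp_commutes_with_powers_H}(b) directly to $p$ and $q$, obtaining
\[
U_{\ol{e+\theta(p)}^{1/2}}\ol{e+\theta(q)}=U_{\ol{e+\theta(q)}^{1/2}}\ol{e+\theta(p)},
\]
lifts this to a $C^{*}$-identity in the JC-algebra generated by $\theta(p),\theta(q),e$, and then invokes the Fuglede--Putnam theorem to conclude that $e+\theta(p)$ and $e+\theta(q)$ commute. That is the whole proof; orthogonality preservation is established only later (Corollary~\ref{uniform lambda}), \emph{using} the present proposition together with the orthogonal-simplex machinery of \S4.1.

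Your route has a real gap at the step you yourself flag as the heart of the matter. You claim that the limit of $\ol{(r_i+(1-t)r_i^{\perp})\#(r_j+(1-t)r_j^{\perp})}$ as $t\to 1$ is the ray of a projection \emph{exactly} when $r_ir_j=0$. But your own argument shows this limit is always $\ol{\theta(e_i+e_j)}$, which \emph{is} a projection ray regardless of whether $r_i\perp r_j$. So the stated criterion cannot be correct, and no contradiction is produced. What you actually have is the much stronger constraint
\[
\ol{(r_i+(1-t)r_i^{\perp})\#(r_j+(1-t)r_j^{\perp})}=\ol{\,w+\sqrt{1-t}\,w^{\perp}\,}\qquad\text{for all }t\in[0,1),
\]
with $w=\theta(e_i+e_j)$ fixed. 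Extracting $r_i\perp r_j$ from this identity (equivalently, from $\theta(e_i+e_j)=r_i+r_j$ rather than $r_i+r_j+\mu e$ for some nonzero $\mu$) is the entire difficulty, and nothing you have written addresses it; the first-order expansion near $t=0$ is consistent with every $\mu$, so a limiting argument alone will not suffice. In effect you are trying to prove that $\theta$ is an orthoisomorphism on the atoms, which in the paper is the \emph{conclusion} of a long development (Lemma~\ref{triangles}, Proposition~\ref{proj on triangles}, Theorem~\ref{t:connecting_simplices}) that rests on Proposition~\ref{p:T_op_com}. Absent an independent argument for $r_i\perp r_j$, the proposal does not go through.
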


\begin{proof}If $p$ and $q$ operator commute, then $e+p$ and $e+q$ operator commute. It follows that $U_{(e+p)^{1/2}}(e+q)=U_{(e+q)^{1/2}}(e+p)$, so $U_{\ol{e+p}^{1/2}}\ol{e+q}=U_{\ol{e+q}^{1/2}}\ol{e+p}$. By Corollary~\ref{c:exp_commutes_with_powers_H} and equation \eqref{e:geod} in the proof of Lemma~\ref{l:proj_to_proj},
\begin{align}\label{eq:quad_proj}
U_{\ol{e+\theta(p)}^{1/2}}\ol{e+\theta(q)}&=U_{f(\ol{e+p})^{1/2}}f(\ol{e+q})=f(U_{\ol{e+p}^{1/2}}\ol{e+q})=f(U_{\ol{e+q}^{1/2}}\ol{e+p})=U_{f(\ol{e+q})^{1/2}}f(\ol{e+p})\nonumber\\
&=U_{\ol{e+\theta(q)}^{1/2}}\ol{e+\theta(p)}.
\end{align}
The JB-algebra generated by $e+\theta(p)$, $e+\theta(q)$, and $e$ is a JC-algebra by \cite[Theorem 7.2.5]{HO}. So, we can think of $U_{(e+\theta(p))^{1/2}}(e+\theta(q))$ and $U_{(e+\theta(q))}(e+\theta(p))$ as
\[
(e+\theta(p))^{\frac{1}{2}}(e+\theta(q))(e+\theta(p))^{\frac{1}{2}}\quad\mbox{ and }\quad (e+\theta(q))^{\frac{1}{2}}(e+\theta(p))(e+\theta(q))^{\frac{1}{2}}
\]
respectively, for some $C$*-algebra multiplication. The equality in \eqref{eq:quad_proj} implies that
\[
(e+\theta(p))^{\frac{1}{2}}(e+\theta(q))(e+\theta(p))^{\frac{1}{2}}=\lambda(e+\theta(q))^{\frac{1}{2}}(e+\theta(p))(e+\theta(q))^{\frac{1}{2}}
\]
for some $\lambda>0$. Since
\[
\sigma((e+\theta(p))^{\frac{1}{2}}(e+\theta(q))(e+\theta(p))^{\frac{1}{2}})=\sigma((e+\theta(q))^{\frac{1}{2}}(e+\theta(p))(e+\theta(q))^{\frac{1}{2}})\subseteq(0,\infty),
\]
we must have $\lambda=1$. Let $a:=(e+\theta(p))^{\frac{1}{2}}(e+\theta(q))^{\frac{1}{2}}$. This element satisfies the identity $a(e+\theta(p))^{\frac{1}{2}}=(e+\theta(p))^{\frac{1}{2}}a^*$, so by the Fuglede-Putnam theorem \cite[Theorem IX.6.7]{Con}, we find that $a^*(e+\theta(p))^{\frac{1}{2}}=(e+\theta(p))^{\frac{1}{2}}a$. This implies that
\begin{align*}
(e+\theta(p))(e+\theta(q))&=((e+\theta(p))(e+\theta(q))^{\frac{1}{2}})(e+\theta(q))^{\frac{1}{2}}=(e+\theta(q))^{\frac{1}{2}}((e+\theta(p))(e+\theta(q))^{\frac{1}{2}})\\&=(e+\theta(q))^{\frac{1}{2}}((e+\theta(q))^{\frac{1}{2}}(e+\theta(p)))
=(e+\theta(q))(e+\theta(p));
\end{align*}
hence $\theta(p)\theta(q)=\theta(q)\theta(p)$. So, $\theta(p)$ and $\theta(q)$ operator commute in JB$(\theta(p),\theta(q),e)$ by \cite[Proposition 1.49]{AS}, and therefore $\theta(p)$ and $\theta(q)$ generate an associative algebra. We conclude that $\theta(p)$ and $\theta(q)$ must operator commute in $N$ by \cite[Proposition 1.47]{AS}.
\end{proof}

This allows us to show that $\theta$ preserves orthogonal complements.

\begin{lemma}\label{lem:T_orth_complements} $\theta(p^\perp)=\theta(p)^\perp$ for all $p\in\P(M)$.
\end{lemma}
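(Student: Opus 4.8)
The plan is to leverage the linearity of the induced map $S$ together with the fact that $[e]=0$ in the quotient $[M]=M/\Span(e)$. First I would dispose of the trivial cases: if $p=e$ then $p^\perp=0$ and $\theta(p^\perp)=\theta(0)=0=e-e=\theta(e)^\perp=\theta(p)^\perp$, and the case $p=0$ is symmetric. So assume $p$ is nontrivial; then $p^\perp=e-p$ is nontrivial as well, so both $\theta(p)$ and $\theta(p^\perp)$ are genuine nontrivial projections in $N$, namely the unique nontrivial projections lying in the classes $S[p]$ and $S[p^\perp]$ respectively. Recall also from Lemma~\ref{l:proj_to_proj} that $S[p]=[\theta(p)]$.

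The key computation is as follows. Since $p+p^\perp=e$ and $[e]=0$ in $[M]$, we have $[p^\perp]=-[p]$. Applying the bijective linear isometry $S$ from Theorem~\ref{t:inducing_isometry}(b) gives $S[p^\perp]=-S[p]=-[\theta(p)]=[-\theta(p)]$. On the other hand $S[p^\perp]=[\theta(p^\perp)]$ by the defining property of $\theta$, so $[\theta(p^\perp)]=[-\theta(p)]$ in $[N]$. By the definition of the quotient, this means $\theta(p^\perp)=-\theta(p)+\lambda e$ for some $\lambda\in\R$.

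It remains to pin down the scalar $\lambda$, which is the only genuine point of the argument. Here I would use the functional calculus: $\theta(p^\perp)$ is a nontrivial projection, so $\sigma(\theta(p^\perp))=\{0,1\}$, whereas $\sigma(-\theta(p)+\lambda e)=\lambda-\sigma(\theta(p))=\{\lambda-1,\lambda\}$ since $\sigma(\theta(p))=\{0,1\}$. Matching these two sets forces $\lambda=1$, and hence $\theta(p^\perp)=e-\theta(p)=\theta(p)^\perp$, as desired.

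I expect no serious obstacle here; the whole statement reduces to linearity of $S$ plus a one-line spectral check, and the main thing to be careful about is simply that the ambient relation $[p^\perp]=-[p]$ only determines $\theta(p^\perp)$ up to a scalar multiple of $e$, which the spectral argument then resolves. As an alternative route one could instead note that $p$ and $p^\perp$ operator commute, so Proposition~\ref{p:T_op_com} yields that $\theta(p)$ and $\theta(p^\perp)$ operator commute and hence lie in a common associative subalgebra isomorphic to some $C(K)$, where the identity $\theta(p^\perp)=e-\theta(p)$ can be read off pointwise; but the linear argument above is shorter and avoids invoking operator commutativity.
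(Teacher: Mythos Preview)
Your proof is correct and in fact slightly cleaner than the paper's. Both arguments start the same way: from $[p]+[p^\perp]=[e]=0$ and linearity of $S$ one gets $\theta(p)+\theta(p^\perp)=\lambda e$ (equivalently $\theta(p^\perp)=-\theta(p)+\lambda e$) for some real $\lambda$. The divergence is in how $\lambda$ is determined. The paper invokes Proposition~\ref{p:T_op_com} to conclude that $\theta(p)$ and $\theta(p^\perp)$ operator commute, places them in an associative $C(K)$-subalgebra, observes there that the sum of two projections equalling $\lambda e$ forces $\lambda\in\{1,2\}$, and then rules out $\lambda=2$ by injectivity of $S$. You bypass all of this with the direct spectral comparison $\{0,1\}=\sigma(\theta(p^\perp))=\{\lambda-1,\lambda\}$, which pins down $\lambda=1$ immediately. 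Your route is more elementary in that it avoids the operator-commutativity machinery entirely; amusingly, the alternative argument you sketch at the end is essentially what the paper does.
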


\begin{proof}
We may assume that $p$ is nontrivial by definition of $\theta$. Since $S[p]+S[p^\perp] =S[e] =[e]$, we obtain $\theta(p)+\theta(p^\perp)=\lambda e$ for some $\lambda \in\mathbb{R}$. As $p$ and $p^\perp$ operator commute, the projections $\theta(p)$ and $\theta(p^\perp)$ operator commute by Proposition~\ref{p:T_op_com}. By \cite[Proposition 1.47]{AS}, $\theta(p)$ and $\theta(p^\perp)$ are contained in an associative subalgebra, which is isomorphic to a $C(K)$-space.  In a $C(K)$-space it is obvious that $\lambda =1 $ or $\lambda =2$. Now note that $\lambda=2$ implies that  $\theta(p)=\theta(p^\perp)=e$ which contradicts the injectivity of $S$, and hence $\theta(p)+\theta(p^\perp)=e$, which shows that $\theta(p^\perp)=\theta(p)^\perp$.
\end{proof}

We will proceed to show that if  $f\colon \ol{M}_+^\circ\to\ol{N}_+^\circ$ is a bijective Hilbert's metric isometry with $f(\ol{e})=\ol{e}$, then for either $f$ or $ \iota\circ f$, the induced map $\theta$ maps orthogonal noncomplementary projections to orthogonal projections. For this we need to look at special simplices in the cone $M_+$.

\subsection{Orthogonal simplices}

Given  nontrivial projections $p_1,p_2,p_3$ in a JBW-algebra $M$ with  $p_1+p_2+p_3=e$, we call
\[
\Delta(p_1,p_2,p_3) :=\ol{\mathrm{conv}\,(p_1,p_2,p_3)\cap M_+^\circ}
\]
an {\em orthogonal simplex} in $\ol{M}_+^\circ$. The next lemma  shows that a bijective Hilbert's metric isometry $f$ maps orthogonal simplices onto orthogonal simplices.
\begin{lemma}\label{triangles}
Let $f\colon \ol{M}_+^\circ\to \ol{N}_+^\circ$ be a bijective Hilbert's metric isometry with $f(\ol{e}) =\ol{e}$. If $\Delta(p_1, p_2, p_3)$ is an  orthogonal simplex and $q_i = \theta(p_i)$ for $i=1,2,3$, then
\begin{enumerate}
\item $q_1 + q_2 + q_3 = e$, and then $f(\Delta(p_1,p_2,p_3))= \Delta(q_1,q_2,q_3)$, or
\item $q_1^\perp + q_2^\perp + q_3^\perp = e$, and then $f(\Delta(p_1,p_2,p_3))= \Delta(q^\perp_1,q^\perp_2,q^\perp_3)$.
\end{enumerate}
In case $(i)$, $\theta$ preserves the orthogonality of $p_1,p_2,p_3$. Moreover, if the map $\theta$ induced by $f$ satisfies the assumptions of case $(ii)$, then the map $\theta$ induced by the isometry  $\iota\circ f$ satisfies the conditions of case $(i)$.
\end{lemma}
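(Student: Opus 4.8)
The plan is to determine the algebraic relation between $q_1,q_2,q_3$ first, and to read off the images of the simplices afterwards. Since $p_1+p_2+p_3=e$, the $p_i$ are pairwise orthogonal, so Proposition~\ref{p:T_op_com} shows the $q_i=\theta(p_i)$ pairwise operator commute, and Lemma~\ref{lem:T_orth_complements} gives $\theta(p_i^\perp)=q_i^\perp$; hence $q_1,q_2,q_3$ lie in a common associative subalgebra, which we identify with a $C(K)$. The decisive observation is that the induced map $S\colon[M]\to[N]$ is \emph{linear} (Theorem~\ref{t:inducing_isometry}) and satisfies $S[p_i]=[q_i]$ (Lemma~\ref{l:proj_to_proj}). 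Applying $S$ to the relation $[p_1]+[p_2]+[p_3]=[e]=[0]$ yields $[q_1]+[q_2]+[q_3]=[0]$, so $q_1+q_2+q_3=\mu e$ for some $\mu\in\R$. In $C(K)$ the left-hand side is an integer-valued function with values in $\{0,1,2,3\}$, so $\mu\in\{0,1,2,3\}$, and the nontriviality of the $q_i$ rules out $\mu=0$ and $\mu=3$. This leaves exactly $\mu=1$, i.e. $q_1+q_2+q_3=e$ (case~$(i)$), or $\mu=2$, i.e. $q_1^\perp+q_2^\perp+q_3^\perp=e$ (case~$(ii)$). In case~$(i)$ the relation $q_1+q_2+q_3=e$ forces the $q_i$ to be pairwise orthogonal, so $\theta$ preserves the orthogonality of $p_1,p_2,p_3$.

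Next I would identify the image $f(\Delta(p_1,p_2,p_3))$. By \eqref{e:geod} we have $f(\ol{tp_i+(1-t)e})=\ol{tq_i+(1-t)e}$ for $0\le t<1$, and all of these rays lie in the associative subalgebra generated by $q_1,q_2,q_3$. Since $f$ preserves geometric means (Proposition~\ref{p:Hmidpoints}) and that subalgebra is closed under geometric means, iterating the midpoint operation on these rays—which in logarithmic coordinates is ordinary midpointing of a spanning set, hence has dense range in $\Delta(p_1,p_2,p_3)$—together with the continuity of $f$ shows that $f(\Delta(p_1,p_2,p_3))$ is contained in the cone of rays of the $q_i$-subalgebra. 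In case~$(i)$ the $q_i$ are orthogonal and sum to $e$, so this subalgebra has exactly three atoms and its ray-cone is precisely $\Delta(q_1,q_2,q_3)$; in case~$(ii)$ the same holds with the three atoms $q_1^\perp,q_2^\perp,q_3^\perp$, giving the ray-cone $\Delta(q_1^\perp,q_2^\perp,q_3^\perp)$. To upgrade the inclusion to equality I would run the identical containment argument for $f^{-1}$, whose induced bijection $\theta^{-1}$ sends $q_i\mapsto p_i$ with $p_1+p_2+p_3=e$, hence falls under case~$(i)$; this yields $f^{-1}(\Delta(q_1,q_2,q_3))\subseteq\Delta(p_1,p_2,p_3)$ (respectively with complements), and combining the two inclusions gives the claimed equalities.

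For the final reduction I would use that the inversion $\iota(\ol{a})=\ol{a}^{-1}$ is a bijective Hilbert's metric isometry fixing $\ol{e}$, so $\iota\circ f$ also induces a map on projections. A direct computation shows that its induced linear map is $S'=-S$, whence the projection $\theta'(p)$ determined by $S'[p]=-[q]=[-q]=[q^\perp]$ equals $\theta(p)^\perp$. Consequently, if $\theta$ (induced by $f$) is in case~$(ii)$, then $\theta'(p_1)+\theta'(p_2)+\theta'(p_3)=q_1^\perp+q_2^\perp+q_3^\perp=e$, so the map induced by $\iota\circ f$ is in case~$(i)$.

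The cleanest part is the dichotomy itself: once linearity of $S$ is invoked, the relation $q_1+q_2+q_3=\mu e$ is immediate and the integrality in $C(K)$ does the rest. I expect the main obstacle to be the image equality—specifically, promoting the geometric-mean inclusion $f(\Delta(p_1,p_2,p_3))\subseteq(\text{subalgebra ray-cone})$ to an honest equality, and verifying that in each case the relevant subalgebra has exactly three atoms so that its ray-cone is genuinely an orthogonal simplex. The bijectivity argument via $f^{-1}$ is what makes this rigorous, and it must be arranged so that it rests only on the containment (which does not use the conclusion) and hence is not circular.
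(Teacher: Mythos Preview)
Your dichotomy argument is essentially the paper's: linearity of $S$ gives $q_1+q_2+q_3=\mu e$, and working in an associative subalgebra forces $\mu\in\{1,2\}$. One minor caution: three pairwise operator commuting elements in a JB-algebra need not automatically lie in a common associative subalgebra; the paper only uses that $q_1,q_2$ operator commute (so $\mathrm{JB}(q_1,q_2,e)$ is associative by \cite[Proposition~1.47]{AS}) and then reads $q_3=\mu e-q_1-q_2$ into this algebra from the linear relation. Your argument is easily repaired the same way.

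Where you genuinely diverge is the identification of $f(\Delta(p_1,p_2,p_3))$. The paper's route is geometric: for a generic $\ol a\in\Delta(p_1,p_2,p_3)$ the line through $\ol a$ towards $\ol{p}_1$ is a unique Hilbert geodesic (Lemma~\ref{ksimplices}) meeting the medians $(\ol{p}_2,\ol{p}_2^\perp)$ and $(\ol{p}_3,\ol{p}_3^\perp)$ in two points; since $f$ sends these medians into the medians of the target simplex (via Lemma~\ref{l:proj_to_proj} and Lemma~\ref{lem:T_orth_complements}) and maps unique geodesics to unique geodesics (hence to straight segments), $f(\ol a)$ lies on the line through the two image points, and therefore in $\mathrm{aff}\,(q_1,q_2,q_3)\cap \ol N_+^\circ=\Delta(q_1,q_2,q_3)$. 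Your route is algebraic: the three half-segments $[\ol e,\ol{p}_i)$ map into the finite-dimensional associative subalgebra generated by the $q_i$, this subalgebra is closed under geometric means, and midpointing in log-coordinates on the three half-rays already fills the $2$-plane $\mathrm{span}\{[p_1],[p_2],[p_3]\}$, so continuity gives the containment; the reverse containment then comes from $f^{-1}$. Both arguments work. The paper's avoids the density step and invokes only the unique-geodesic characterization, while yours leans solely on Proposition~\ref{p:Hmidpoints} and needs no appeal to Lemma~\ref{ksimplices} for the image statement. One bookkeeping correction: in case~$(ii)$ the relevant simplex for $f^{-1}$ is $\Delta(q_1^\perp,q_2^\perp,q_3^\perp)$, and $\theta^{-1}(q_i^\perp)=p_i^\perp$ with $p_1^\perp+p_2^\perp+p_3^\perp=2e$ puts $f^{-1}$ again in case~$(ii)$, not case~$(i)$; the desired containment $f^{-1}(\Delta(q_1^\perp,q_2^\perp,q_3^\perp))\subseteq\Delta(p_1,p_2,p_3)$ still follows because the atoms of the $p_i^\perp$-subalgebra are the $p_i$.

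The final reduction via $S'=-S$ and $\theta'(p)=\theta(p)^\perp$ is exactly the paper's computation.
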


\begin{proof}
First remark that, as $p_1+p_2+p_3=e$ and $S$ is linear, $S[p_1]+S[p_2]+S[p_3] =S[e] =[e]$, and hence
\begin{equation}\label{T1}
q_1 + q_2 + q_3 = \theta(p_1)+\theta(p_2)+\theta(p_3) =\lambda e\mbox{\quad for some }\lambda\in\mathbb{R}.
\end{equation}

As $p_1+p_2<e$, we know that $p_1$ and $p_2$ are orthogonal by  \cite[Proposition~2.18]{AS}, and hence $p_1$ and $p_2$ operator commute by  \cite[Proposition 1.47]{AS}.  We also know from Proposition \ref{p:T_op_com} that $q_1 = \theta(p_1)$ and $q_2 = \theta(p_2)$ operator commute. By \cite[Proposition 1.47]{AS}, $q_1$ and $q_2$ are contained in an associative subalgebra, which is isomorphic to a $C(K)$-space. Note that this subalgebra also contains $\lambda e$ and hence also $q_3$ by (\ref{T1}). In a $C(K)$-space it is obvious that $\lambda \in \{1, 2\}$ in \eqref{T1}. In fact, the case $\lambda = 1$ corresponds with the pairwise orthogonality of $q_1$, $q_2$ and $q_3$, whereas the case $\lambda =2$ corresponds to pairwise orthogonality of $q_1^\perp$, $q_2^\perp$ and $q_3^\perp$, and $q_1^\perp+q_2^\perp+q_3^\perp = e$.

We will now show that $f$ maps $\Delta(p_1,p_2,p_3)$ onto $\Delta(q_1,q_2,q_3)$ in case $q_1+q_2+q_3 = e$.  Let $a\in\mathrm{conv}(p_1,p_2,p_3)\cap M_+^\circ$ be a point not lying on any $(p_i,p_i^\perp)$ for $i=1,2,3$. We know that $[\ol{a},\ol{p}_i)$ is a unique geodesic by Lemma \ref{ksimplices}.
Let $(\ol{a}',\ol{p}_1)$ be the line segment through $\ol{p}_1$ and $\ol{a}$ with $a'$ in the boundary of $\mathrm{conv}(p_1,p_2,p_3)$. This unique geodesic intersects $(\ol{p}_2,\ol{p}_2^\perp)$ and $(\ol{p}_3,\ol{p}_3^\perp)$ in 2 distinct points, say $\ol{b}_2$ and $\ol{b}_3$ respectively, see Figure 4.
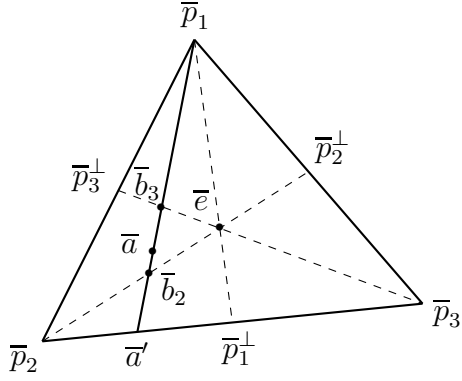
\begin{figure}[h]
\setcounter{figure}{3}
\[\begin{tikzpicture}
\draw[thick](0,0)--(2,4);
%\draw[fill](0,0)circle[radius=0.04];
\draw[thick](0,0)--(5,0.5);
\node[below left]at(0.1,0.1){$\ol{p}_2$};
%\draw[fill](5,0.5)circle[radius=0.04];
\draw[thick](2,4)--(5,0.5);
\node[above left]at(2.35,4){$\ol{p}_1$};
%\draw[fill](2,4)circle[radius=0.04];
\node[below right]at(5,0.7){$\ol{p}_3$};
\draw[dashed](0,0)--(3.5,2.25);
%\draw[fill](3.5,2.25)circle[radius=0.04];
\node[above right]at(3.45,2.20){$\ol{p}_2^\perp$};
\draw[dashed](2,4)--(2.5,0.25);
%\draw[fill](2.5,0.25)circle[radius=0.04];
\node[below]at(2.6,0.28){$\ol{p}_1^\perp$};
\draw[dashed](1,2)--(5,0.5);
%\draw[fill](1,2)circle[radius=0.04];
\node[left]at(1,2.2){$\ol{p}_3^\perp$};
%\draw[fill](2.33,1.5)circle[radius=0.04];
\node[above left]at(2.33,1.6){$\ol{e}$};
\node at(2.33,1.51){\tiny{$\bullet$}};

\draw[thick](2,4)--(1.25,0.125);
\node[below]at(1.25,0.125){$\ol{a}'$};
\node[left]at(1.4,1.3){$\ol{a}$};
\node at (1.45,1.2) {\tiny{$\bullet$}};

\node[left]at(1.7,2.1){$\ol{b}_3$};
\node at(1.56,1.78) {\tiny{$\bullet$}};

\node[left]at(2.05,.7){$\ol{b}_2$};
\node at (1.4,.90){\tiny{$\bullet$}};

\end{tikzpicture}\]
\caption{Orthogonal simplex}
\label{simplex}
\end{figure}
Since it must be mapped to a line segment, it follows that $f(\ol{a})$ lies on the line segment through  $f(\ol{b}_2)$ and $f(\ol{b}_3)$, which is contained in  $\Delta(q_1,q_2,q_3)$.  By the invertibility of $f$, we conclude that $f(\Delta(p_1,p_2,p_3))= \Delta(q_1,q_2,q_3)$. The same argument can be used to show that  $f(\Delta(p_1,p_2,p_3))= \Delta(q^\perp_1,q^\perp_2,q^\perp_3)$ if $q_1^\perp+q_2^\perp+q_3^\perp=e$.

To prove the final statement remark that if  we compose $f$ with the inversion $\iota$, we obtain
\[
S [p_i] = \log \iota(f( \exp([p_i]))) = \log f( \exp([p_i]))^{-1} =  -\log f( \exp([p_i])) =-[q_i] =[q_i^\perp].
\]
So, the map $\theta$  induced by $\iota\circ f$ satisfies $ \theta(p_1)+\theta(p_2)+\theta(p_3) = q_1^\perp+q_2^\perp+q_3^\perp =e$, as the $q_i^\perp$ are pairwise orthogonal in case $(ii)$.
\end{proof}

It follows from Lemma \ref{triangles} that if $\Delta(p_1,p_2,p_3)$ is an orthogonal simplex, then the restriction of $f$ to $\Delta(p_1,p_2,p_3)$ is a Hilbert's metric isometry onto either $\Delta(\theta(p_1),\theta(p_2),\theta(p_3))$  or $\Delta(\theta(p_1)^\perp,\theta(p_2)^\perp,\theta(p_3)^\perp)$.  The Hilbert's metric isometries between simplices  have been characterized, see  \cite{dlH} or \cite{LW}, and yields the following dichotomy, as $f(\ol{e})=\ol{e}$.
The isometry  $f$ maps $\Delta(p_1,p_2,p_3)$ onto $\Delta(\theta(p_1),\theta(p_2),\theta(p_3))$ in Lemma~\ref{triangles} if and only if the restriction of $f$ to $\Delta(p_1,p_2,p_3)$ is of the form,
\[
\ol{\lambda_1 p_1+\lambda_2 p_2+\lambda_3 p_3}\,\mapsto\, \ol{\lambda_1 \theta(p_1)+\lambda_2 \theta(p_2)+\lambda_3 \theta(p_3)},
\]
which is equivalent to saying that the restriction of $f$ to  $\Delta(p_1,p_2,p_3)$  is projectively linear.
On the other hand, the  isometry  $f$ maps $\Delta(p_1,p_2,p_3)$ onto $\Delta(\theta(p_1)^\perp,\theta(p_2)^\perp,\theta(p_3)^\perp)$ in Lemma \ref{triangles} if and only if the restriction of $f$ to $\Delta(p_1,p_2,p_3)$ is of the form,
 \[
\ol{\lambda_1 p_1+\lambda_2 p_2+\lambda_3 p_3}\,\mapsto\, \ol{\lambda_1^{-1} \theta(p_1)+\lambda_2^{-1} \theta(p_2)+\lambda_3^{-1} \theta(p_3)},
\]
which is equivalent to saying that the restriction of $\iota \circ f$ to  $\Delta(p_1,p_2,p_3)$  is projectively linear. The above discussion yields the following corollary.

\begin{corollary}\label{cor:proj_linear}
Let $f\colon \ol{M}_+^\circ\to \ol{N}_+^\circ$ be a bijective Hilbert's metric isometry with $f(\ol{e}) =\ol{e}$ and let $\Delta(p_1,p_2,p_3)$ be an orthogonal simplex in $\ol{M}_+^\circ$. Then either $f$ or $\iota\circ f$ is projectively linear on $\Delta(p_1,p_2,p_3)$, and its induced map $\theta$ preserves the orthogonality of $p_1,p_2$ and $p_3$.
\end{corollary}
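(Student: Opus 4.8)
The plan is to read the corollary off from Lemma~\ref{triangles} together with the classification of Hilbert's metric isometries between simplices cited just above. First I would set $q_i := \theta(p_i)$ and invoke Lemma~\ref{triangles} to obtain the two-way alternative: either (i) $q_1+q_2+q_3=e$ and $f(\Delta(p_1,p_2,p_3))=\Delta(q_1,q_2,q_3)$, or (ii) $q_1^\perp+q_2^\perp+q_3^\perp=e$ and $f(\Delta(p_1,p_2,p_3))=\Delta(q_1^\perp,q_2^\perp,q_3^\perp)$. In both cases the restriction of $f$ is a bijective Hilbert's metric isometry from the orthogonal simplex $\Delta(p_1,p_2,p_3)$ onto an orthogonal simplex, so I am reduced to understanding isometries between $2$-simplices.

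The key normalization is that $\ol{e}$ is the barycenter of each simplex in sight: one has $\tfrac13(p_1+p_2+p_3)=\tfrac13 e$, and likewise $\tfrac13(q_1+q_2+q_3)$ in case (i) and $\tfrac13(q_1^\perp+q_2^\perp+q_3^\perp)$ in case (ii) both have ray $\ol{e}$. Thus $f(\ol{e})=\ol{e}$ says the restriction of $f$ is a barycenter-preserving isometry between two $2$-simplices. Identifying each simplex with its normed-plane model (whose unit ball is a hexagon), barycenter-preservation forces the induced map to fix the origin, so it is linear by Mazur--Ulam and therefore one of the finitely many linear isometries classified in \cite{dlH} and \cite{LW}. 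Moreover, Lemma~\ref{l:proj_to_proj} already pins down the vertex correspondence $f(\ol{p_i})=\ol{q_i}$, which singles out a unique such isometry in each case.

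I would then read off the two cases. In case (i) the $\ol{q_i}$ are the vertices of the target simplex, so the vertex-preserving linear isometry is exactly the projective-linear map $\ol{\lambda_1 p_1+\lambda_2 p_2+\lambda_3 p_3}\mapsto\ol{\lambda_1 q_1+\lambda_2 q_2+\lambda_3 q_3}$; thus $f$ is projectively linear on $\Delta(p_1,p_2,p_3)$, and Lemma~\ref{triangles} already records that the $\theta$ induced by $f$ preserves the orthogonality of $p_1,p_2,p_3$. In case (ii) the points $\ol{q_i}=\ol{q_j^\perp+q_k^\perp}$ are the opposite-edge midpoints of the target simplex, so the relevant linear isometry is the central symmetry through the barycenter; composing with the inversion $\iota$ cancels this symmetry and leaves a projective-linear map, whence $\iota\circ f$ is projectively linear on $\Delta(p_1,p_2,p_3)$. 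By the final assertion of Lemma~\ref{triangles}, the map $\theta$ induced by $\iota\circ f$ satisfies $\theta(p_1)+\theta(p_2)+\theta(p_3)=q_1^\perp+q_2^\perp+q_3^\perp=e$, i.e.\ falls into case (i), and hence preserves the orthogonality of $p_1,p_2,p_3$. In either situation precisely one of $f$, $\iota\circ f$ is projectively linear on the simplex, and the corresponding induced $\theta$ preserves orthogonality, which is the claim.

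The genuine external input is the classification of simplex isometries, which I treat as cited; everything else is bookkeeping. I expect the only delicate point to be correctly matching the vertex correspondence of Lemma~\ref{l:proj_to_proj} with the target simplex in case (ii), where $f(\ol{p_i})=\ol{q_i}$ lands on an edge midpoint rather than a vertex, since it is exactly this mismatch that forces the central symmetry and hence the passage to $\iota\circ f$.
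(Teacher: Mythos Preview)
Your argument is correct and follows essentially the same route as the paper: invoke Lemma~\ref{triangles} for the dichotomy, then use the cited classification of Hilbert isometries of $2$-simplices together with $f(\ol{e})=\ol{e}$ and the vertex correspondence from Lemma~\ref{l:proj_to_proj} to pin down the map in each case. The only cosmetic point is that $f(\ol{p_i})$ is not literally defined since $\ol{p_i}\in\partial\ol{M}_+$; what you actually use (and what Lemma~\ref{l:proj_to_proj} gives) is that $f$ sends the segment $[\ol{e},\ol{p_i})$ to $[\ol{e},\ol{q_i})$, which is exactly the paper's phrasing.
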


Our next proposition states that if two orthogonal simplices have a line in common, then $f$ is projectively linear on one simplex if and only if it projectively linear on the other one. The proof uses, among other things,  the following well known fact. If $a,b\in M_+^\circ$ are such that the line through $a$ and $b$ intersect $\partial  M_+$ in $a'$ and $b'$ such that $a$ is between $b$ and $a'$, $b$ is between $a$ and $b'$, then
\begin{equation}\label{ratios}
M(a/b) = \frac{\|a-b'\|}{\|b-b'\|}\mbox{\quad and \quad } M(b/a) = \frac{\|b-a'\|}{\|a-a'\|}.
\end{equation}
A proof can be found in \cite[Chapter 2]{LNBook}.

\begin{proposition}\label{proj on triangles}
Let $f\colon \ol{M}_+^\circ\to \ol{N}_+^\circ$ be a bijective Hilbert's metric isometry with $f(\ol{e}) =\ol{e}$. Let $\Delta(p_1,p_2,p_3)$ and $\Delta(p_4,p_5,p_6)$ be two distinct orthogonal simplices in $\ol{M}_+^\circ$ such that either $p_3=p_6$ or $p_3=p^\perp_6$, so they share the segment $(\ol{p}_3,\ol{p}_3^\perp)$. Then $f$ is projectively linear on $\Delta(p_1,p_2,p_3)$ if and only if it is projectively linear on $\Delta(p_4,p_5,p_6)$.
\end{proposition}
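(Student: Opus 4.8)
The plan is to compute the restriction of $f$ to the shared segment $(\ol{p}_3,\ol{p}_3^\perp)$ in two different ways — one dictated by the behaviour of $f$ on $\Delta(p_1,p_2,p_3)$ and one by its behaviour on $\Delta(p_4,p_5,p_6)$ — and to show that the two descriptions of this single map are incompatible unless $f$ is projectively linear on both simplices at once. By the symmetry of the hypotheses it suffices to prove one implication, say that projective linearity of $f$ on $\Delta(p_1,p_2,p_3)$ forces projective linearity on $\Delta(p_4,p_5,p_6)$; the converse then follows by interchanging the roles of the two simplices. Throughout I would write $q_i:=\theta(p_i)$ and exploit that $\theta$ is the \emph{single} global map on projections induced by $f$: the value $q_3=\theta(p_3)$ that enters the formula for $f$ on the shared segment is the same whether computed from $\Delta(p_1,p_2,p_3)$ or from $\Delta(p_4,p_5,p_6)$, and this common value is exactly what couples the two computations.

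First I would record the reformulation of Corollary~\ref{cor:proj_linear}: on an orthogonal simplex $f$ is projectively linear precisely when $\theta$ falls into case $(i)$ of Lemma~\ref{triangles} (the images of the three vertices sum to $e$), and it fails to be projectively linear precisely when $\theta$ falls into case $(ii)$, in which case $f$ acts by $\ol{\sum_i\lambda_i p_i}\mapsto\ol{\sum_i\lambda_i^{-1}q_i}$. I would then parametrise the shared segment by $x_r:=\ol{r p_3+p_3^\perp}$ with $r>0$, so that $x_1=\ol{e}$ is the fixed point of $f$. If $f$ is projectively linear on $\Delta(p_1,p_2,p_3)$ then $q_1+q_2+q_3=e$, hence $q_1+q_2=q_3^\perp$, and the projective-linear formula gives $f(x_r)=\ol{r q_3+q_3^\perp}$; in the ray this is the point of $(\ol{q}_3,\ol{q}_3^\perp)$ whose $q_3:q_3^\perp$ ratio equals $r$.

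Now suppose, towards a contradiction, that $f$ is \emph{not} projectively linear on $\Delta(p_4,p_5,p_6)$, so $\iota\circ f$ is and the case $(ii)$ formula applies there. In the subcase $p_3=p_6$ one has $\theta(p_6)=q_3$, and $q_4^\perp+q_5^\perp+q_6^\perp=e$ forces $q_4+q_5=2e-q_3$; writing $x_r=\ol{r p_6+p_4+p_5}$ the case $(ii)$ formula yields $f(x_r)=\ol{q_4+q_5+r^{-1}q_3}=\ol{(1+r^{-1})q_3+2q_3^\perp}$, whose $q_3:q_3^\perp$ ratio is $(r+1)/(2r)$. Equating with the ratio $r$ found above gives $2r^2-r-1=0$, i.e. $r=1$, so the two descriptions of the one map $f$ disagree at every $x_r$ with $r\neq1$, a contradiction. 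The subcase $p_3=p_6^\perp$ is handled identically after invoking $\theta(p_3^\perp)=\theta(p_3)^\perp$ from Lemma~\ref{lem:T_orth_complements}: there $\theta(p_6)=q_3^\perp$, the complement relation gives $q_4+q_5=e+q_3$, and one is led to $r^2+r-2=0$, again forcing $r=1$. The ratios used here can be pinned down directly from the cross-ratio identity~\eqref{ratios}, which in any case underlies the simplex-isometry classification packaged in Corollary~\ref{cor:proj_linear}.

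I expect the main obstacle to be purely the projection bookkeeping in case $(ii)$ — keeping straight whether the shared vertex contributes $q_3$ or $q_3^\perp$ and correctly resolving $q_4+q_5$ from the complement relation $q_4^\perp+q_5^\perp+q_6^\perp=e$ — rather than any genuinely analytic difficulty; once these linear relations among the projections are in hand the contradiction collapses to a one-line quadratic. The conceptual engine of the argument is that $f$ restricted to the shared segment is a single Hilbert isometry fixing $\ol{e}$, so the two a priori admissible descriptions (one per simplex) are forced to coincide, and the computation shows they can coincide only when both simplices lie in case $(i)$, which is exactly the asserted equivalence.
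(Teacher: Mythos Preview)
Your approach has a genuine gap: the restriction of $f$ to the shared segment $(\ol{p}_3,\ol{p}_3^\perp)$ cannot distinguish case~$(i)$ from case~$(ii)$, so no contradiction can be extracted there.  The point is that equation~\eqref{e:geod}, proved in Lemma~\ref{l:proj_to_proj} \emph{independently of any simplex}, already forces $f(\ol{r p_3+p_3^\perp})=\ol{r q_3+q_3^\perp}$ for every $r>0$.  Hence whatever formula governs $f$ on $\Delta(p_4,p_5,p_6)$ must reproduce exactly this map on the shared segment, in case~$(ii)$ just as in case~$(i)$.  Your computation gives a different answer in case~$(ii)$ only because the displayed case~$(ii)$ formula $\ol{\sum\lambda_i p_i}\mapsto\ol{\sum\lambda_i^{-1}q_i}$ is not the correct one; it should read $\ol{\sum\lambda_i^{-1}q_i^\perp}$.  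You can check this directly with $f=\iota$: then $\theta(p)=p^\perp$, every orthogonal simplex falls in case~$(ii)$, and $\iota(\ol{\sum\lambda_i p_i})=\ol{\sum\lambda_i^{-1}p_i}=\ol{\sum\lambda_i^{-1}q_i^\perp}$.  With the corrected formula and $p_6=p_3$ one has $q_4^\perp+q_5^\perp=q_6=q_3$, so $f(x_r)=\ol{q_4^\perp+q_5^\perp+r^{-1}q_6^\perp}=\ol{q_3+r^{-1}q_3^\perp}=\ol{r q_3+q_3^\perp}$, agreeing with the first computation, and your quadratic contradiction disappears.

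Because the one-dimensional shared segment carries no information beyond \eqref{e:geod}, the paper's proof is forced to work in one dimension higher.  Assuming case~$(i)$ on $\Delta(p_1,p_2,p_3)$ and case~$(ii)$ on $\Delta(p_4,p_5,p_6)$ (with $p_3=p_6$), it shows that on the domain side the two simplices span a pyramid $\mathrm{conv}(p_1,\ldots,p_5)$ whose boundary lies in $\partial M_+$, while on the target side $q_1+q_2+q_4^\perp+q_5^\perp=e$ so the images span a $3$-simplex.  The contradiction then comes from a Gromov-product argument: along suitable rays toward the boundary the Gromov product stays bounded in the pyramid but is unbounded in the $3$-simplex, which is impossible for an isometry.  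So the essential missing idea in your plan is that one must compare two-dimensional neighbourhoods of the shared segment, not the segment itself.
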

\begin{proof}
Suppose for the sake of contradiction that $f$ is projectively linear $\Delta(p_1,p_2,p_3)$, but not on  $\Delta(p_4,p_5,p_6)$. Denote the image of $\Delta(p_1,p_2,p_3)$  by $\Delta(q_1,q_2,q_3)$, and  the image of $\Delta(p_4,p_5,p_6)$ by
$\Delta(q_4^\perp,q_5^\perp,q_6^\perp)$ as in Lemma \ref{triangles}.  There are 2 cases to consider: $p_3=p_6$ and $p_3=p_6^\perp$. Let us first assume that $p_3=p_6$.

In that case the orthogonal simplices  $\Delta(p_1,p_2,p_3)$  and $\Delta(p_4,p_5,p_6)$ are configured as in Figure~\ref{pyramid}. We will show that
\begin{equation}\label{eq:pyramid}
\mathrm{aff}(p_1,p_2,p_3,p_4,p_5) \cap M_+ =\mathrm{conv}(p_1,p_2,p_3,p_4,p_5).
\end{equation}
\begin{figure}[H]
\[
\begin{tikzpicture}[x=1cm, y=1cm]
\draw[thick](0,0)--(0.6,4.8);
\draw[thick](0,0)--(-0.5,2);
\draw[thick](-0.5,2)--(0.6,4.8);
\draw[thick](0,0)--(4,1.6);
\draw[thick](4,1.6)--(0.6,4.8);
\draw[dashed, thick](-0.5,2)--(2.2,2.6);
\draw[dashed, thick](0.6,4.8)--(2.2,2.6);
\draw[dashed, thick](2.2,2.6)--(4,1.6);
\draw[dashed](0,0)--(2.2,2.6);
\draw[dashed](-0.5,2)--(4,1.6);
%\draw[fill](0,0)circle[radius=0.04];
%\draw[fill](0.6,4.8)circle[radius=0.04];
%\draw[fill](-0.5,2)circle[radius=0.04];
%\draw[fill](4,1.6)circle[radius=0.04];
%\draw[fill](2.2,2.6)circle[radius=0.04];
\node[above]at(0.6,4.8){$\ol{p}_3$};
\node[left]at(-0.5,2){$\ol{p}_1$};
\node[right]at(2.1,2.8){$\ol{p}_5$};
\node[below]at(0,0){$\ol{p}_4$};
\node[right]at(4,1.6){$\ol{p}_2$};
\node[right]at(1.4,1.5){$\ol{p}_3^\perp$};

\end{tikzpicture}
\]
\caption{Pyramid}
\label{pyramid}
\end{figure}

However, before we do that we consider the  situation for the orthogonal simplices $\Delta(q_1,q_2,q_3)$ and $\Delta(q_4^\perp,q_5^\perp,q_3^\perp)$, which  are configured as in Figure \ref{3simplex}.
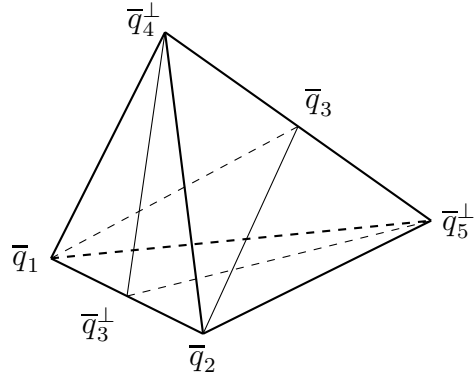
\begin{figure}[h]
\[
\begin{tikzpicture}[x=1cm, y=1cm]
\draw[thick](0,0)--(1.5,3);
\draw[thick](0,0)--(2,-1);
\draw[thick](1.5,3)--(2,-1);
\draw[thick](2,-1)--(5,0.5);
\draw[thick](1.5,3)--(5,0.5);
\draw[dashed, thick](0,0)--(5,0.5);
%\draw[fill](0,0)circle[radius=0.04];
\node[left] at (0,0){$\ol{q}_1$};
%\draw[fill](1.5,3)circle[radius=0.04];
\node[above left]at(1.6,2.8){$\ol{q}_4^\perp$};
%\draw[fill](2,-1)circle[radius=0.04];
\node[below]at(2,-1){$\ol{q}_2$};
%\draw[fill](5,0.5)circle[radius=0.04];
\node[right]at(5,0.5){$\ol{q}_5^\perp$};
\draw(1,-0.5)--(1.5,3);
\draw[dashed](1,-0.5)--(5,0.5);
\draw(2,-1)--(3.25,1.75);
\draw[dashed](0,0)--(3.25,1.75);
%\draw[fill](3.25,1.75)circle[radius=0.04];
\node[above right]at(3.20,1.75){$\ol{q}_3$};
%\draw[fill](1,-0.5)circle[radius=0.04];
\node[below left]at(1,-0.5){$\ol{q}_3^\perp$};
\end{tikzpicture}
\]
\caption{3-simplex}
\label{3simplex}
\end{figure}
Note that as $q_1+q_2 =q_3^\perp$ and $q_4^\perp+q_5^\perp=q_3$ we get that $q_1+q_2+q_4^\perp+q_5^\perp =e$. So, it follows from Lemma \ref{ksimplices} that
\[
\mathrm{aff}(q_1,q_2,q_4^\perp,q_5^\perp)\cap N_+=\mathrm{conv}(q_1,q_2,q_4^\perp,q_5^\perp).
\]

We will now show equality (\ref{eq:pyramid}). Note that $\frac{1}{2}p_2^\perp$, $\frac{1}{2}p_5^\perp$ and $\frac{1}{3}e$ are in $\mathrm{conv}(p_1,p_2,p_3,p_4,p_5)$.
Suppose, for the sake of contradiction, that $\frac{1}{2}(\frac{1}{2}p_2^\perp+\frac{1}{2}p_5^\perp)\not\in \partial M_+$. We know from \cite[Theorem~5.2]{KN} that if we have sequences
\[
b_2(t_n):= (1-t_n){\textstyle\frac{1}{3}}e +t_n {\textstyle\frac{1}{2}}p_2^\perp
\mbox{\quad and\quad }
b_5(s_n):= (1-s_n){\textstyle\frac{1}{3}}e +s_n {\textstyle\frac{1}{2}}p_5^\perp,
\]
with $s_n,t_n\in [0,1)$ such that $t_n\to 1$ and $s_n\to 1$ as $n\to\infty$, then the Gromov product
\[
(b_2(t_n)\mid b_5(s_n))_{e}:=\frac{1}{2}\Big{(}d_H(b_2(t_n),e)+d_H(b_5(s_n),e) - d_H(b_2(t_n),b_5(s_n))\Big{)}
\]
satisfies
\begin{equation}\label{grom1}
\limsup_{n\to \infty}\, (b_2(t_n)\mid b_5(s_n))_{e}<\infty.
\end{equation}
Note that $(\ol{p}_2,\ol{p}_2^\perp)$ and $(\ol{p}_5,\ol{p}_5^\perp)$ are unique geodesics in $(\ol{M}_+^\circ,d_H)$. So, the image of $[\ol{e},\ol{p}_2^\perp)$ under $f$ is the segment $[\ol{e},\ol{q}_2)$, and the image of $[\ol{e},\ol{p}_5^\perp)$  is $[\ol{e},\ol{q}_5^\perp)$. Let us now consider representations of these segments in $\mathrm{conv}(q_1,q_2,q_4^\perp,q_5^\perp)$. It is easy to verify that $\frac{1}{4}e$, $\frac{1}{3}q_2^\perp$ and $\frac{1}{3}q_5$ lie inside $\mathrm{conv}(q_1,q_2,q_4^\perp,q_5^\perp)$. Now for $n\geq 1$ select $a_n$ from the segment $[\frac{1}{4}e,\frac{1}{3}q_2^\perp)$ and $b_n$ from the segment $[\frac{1}{4}e,q_5^\perp)$ such that $a_n\to \frac{1}{3} q_2^\perp$, $b_n\to q_5^\perp$, and the segment $[a_n,b_n]$ is parallel to the segment $[\frac{1}{3}q_2^\perp,q_5^\perp]$.
\begin{figure}[h]
\[
\begin{tikzpicture}[x=1cm, y=1cm]
\draw[thick](0,2)--(5,2);
\draw[dashed, thick](.4,1.2)--(4.61,1.2);
\draw[thick](0,2)--(1,0);
\draw[thick](5,2)--(4,0);
\draw[dashed, thick](5,2)--(1,0);
\draw[dashed, thick](2.5,2)--(2.5,0);

\node[above] at (-0.2,2){$c$};
\node[above] at (2.5,2){$\frac{1}{3}q_2^\perp$};
\node[above] at (5.2,2){$q_5^\perp$};
\node[left] at (2.5,1.5){$a_n$};
\node[left] at (3.5,1.5){$b_n$};
\node[left] at (3.25,.4){$\frac{1}{4}e$};
\node[left] at (1,0){$\frac{1}{3}q_5$};
\node[left] at (.3,1.2){$a'_n$};
\node[right] at (4.7,1.2){$b'_n$};

\node at (2.5,1.2){\tiny{$\bullet$}};
\node at (3.4,1.2){\tiny{$\bullet$}};
\node at (2.5,.75){\tiny{$\bullet$}};
\end{tikzpicture}
\]
\caption{parallel segments}
\label{parallel}
\end{figure}
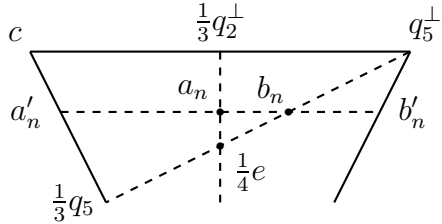

Let $c$, $a'_n$, and $b'_n$ be in the boundary of $\mathrm{conv}(q_1,q_2,q_4^\perp,q_5^\perp)$ as in Figure \ref{parallel}. Then the triangles with vertices $b_n$, $b'_n$ and $q^\perp_5$ are similar for all $n\geq 1$. Hence there exists a constant $C>0$ such that
\[
\frac{\|b_n-b'_n\|}{\|b_n-q_5^\perp\|}=C\mbox{\quad for all }n\geq 1.
\]
Now using (\ref{ratios}) we deduce that
\begin{eqnarray*}
 d_H(b_n, e) -d_H(a_n,b_n) & = &  d_H(b_n,{\textstyle\frac{1}{4}}e) -d_H(a_n,b_n)\\
 & = &  \log\left(\frac{\|b_n-\frac{1}{3}q_5\|}{\|\frac{1}{4}e -\frac{1}{3}q_5\|}
  \frac{\|\frac{1}{4}e-q_5^\perp\|}{\|b_n -q_5^\perp\|}\right) -
  \log\left(\frac{\|a'_n-b_n\|}{\|a'_n -a_n\|}
  \frac{\|a_n -b'_n\|}{\|b_n -b'_n\|}\right)\\
  & \to & C + \log\left(\frac{\|q_5^\perp-\frac{1}{3}q_5\| \|\frac{1}{4}e-q_5^\perp\|}{\|\frac{1}{4}e -\frac{1}{3}q_5\|}
\right) -  \log\left(\frac{\|c-q_5^\perp\| \|\frac{1}{3}q_2^\perp -q_5^\perp\|}{\|c -\frac{1}{3}q_2^\perp\|}\right).
\end{eqnarray*}
Thus, there exists a constant $C'>0$ such that
\[
2(a_n\mid b_n)_e \geq d_H(a_n,e) +C'\mbox{\quad for all }n\geq 1,
\]
which shows that
\[
\limsup_{n\to\infty}\, (a_n\mid b_n)_e=\infty.
\]
As $f^{-1}$ is an isometry and $f(\ol{e})= \ol{e}$, we get that
\[
\limsup_{n\to\infty} \,(f^{-1}(\ol{a}_n)\mid f^{-1}(\ol{b}_n))_{\ol{e}} = \limsup_{n\to\infty}\, (\ol{a}_n\mid \ol{b}_n)_{\ol{e}} = \limsup_{n\to\infty}\, (a_n\mid b_n)_e=\infty.
\]
By construction, however, $f^{-1}(\ol{a}_n) = \ol{b_2(t_n)}$ and $f^{-1}(\ol{b}_n) = \ol{b_5(s_n)}$ for some sequences $(t_n)$ and $(s_n)$ in $[0,1)$ with $t_n,s_n\to 1$, which contradicts (\ref{grom1}).

Thus, $\frac{1}{2} (p_2^\perp+p_5^\perp)\in\partial M_+$ and hence $\mathrm{conv}(p_1,p_3,p_4)\subseteq \partial M_+$. The same argument works for the other  faces containing $p_3$. The square face is also contained in $\partial M_+$, as it contains $\frac{1}{2}p_3^\perp$. This proves \eqref{eq:pyramid}.

Next, we will show that the pre-image of the simplex  $\ol{\mathrm{conv}(q_1,q_2,q_4^\perp,q_5^\perp)}$ lies inside the pyramid  $\ol{\mathrm{conv}(p_1,p_2,p_3,p_4,p_5)}$.
Suppose that $c$ is a point on the segment $(q_2,q_5^\perp)$. The triangle $\mathrm{conv}(c,q_1,q_4^\perp)$ intersects the triangles $\mathrm{conv}(q_1,q_2,q_3)$ and  $\mathrm{conv}(q_3^\perp,q_4^\perp,q_5^\perp)$ in a line segment, say $\gamma_1$ and $\gamma_2$ respectively, see Figure \ref{intersect}. Now suppose that $a\in \mathrm{conv}(c,q_1,q_4^\perp)\cap N_+^\circ$ and let $b$ be the point of intersection of the line segment from $c$ through $a$ with $\mathrm{conv}(q_3^\perp,q_4^\perp,q_5^\perp)$.

\begin{figure}[h]
\[
\begin{tikzpicture}[x=1cm, y=1cm]
\path[fill=lightgray](0,0)to(4,0)to(1.5,3)to(0,0);
\draw[thick](0,0)--(1.5,3);
\draw[thick](0,0)--(2,-1);
\draw[thick](1.5,3)--(2,-1);
\draw[thick](2,-1)--(5,0.5);
\draw[thick](1.5,3)--(5,0.5);
\draw[dashed, thick](0,0)--(5,0.5);
\node[left] at (0,0){$q_1$};
\node[above left]at(1.6,3){$q_4^\perp$};
\node[below]at(2,-1){$q_2$};
\node[right]at(5,0.5){$q_5^\perp$};
\draw[thick](1.5,3)--(4,0);
\draw[dashed, thick](0,0)--(4,0);
\node[below right]at(4,0.1){$c$};
\draw[dashed](0,0)--(3,1.2);
\draw[dashed](1.5,3)--(3,0);
\node[below right]at(4,0.1){$c$};
\node[left]at(.7,1.55){$b$};
\node[above]at(1.4,1.1){$a$};
\draw[dashed](4,0)--(.7,1.4);
\node at(3.2,1.3){$\gamma_1$};
\node at(3,-0.26){$\gamma_2$};
\node at(1.4,1.1){\tiny{$\bullet$}};
\node at(2.05,0.82){\tiny{$\bullet$}};
\node at(2.735,0.545){\tiny{$\bullet$}};
\end{tikzpicture}
\]
\caption{Intersections}
\label{intersect}
\end{figure}
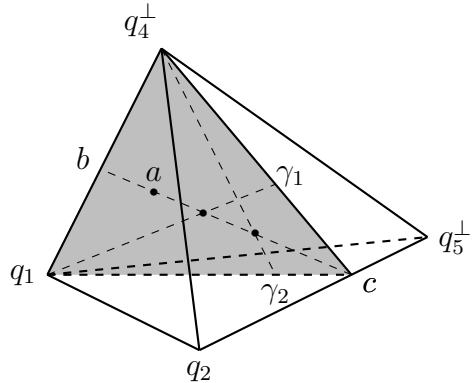

The segment $(\ol{c},\ol{b})$ is  a unique geodesic by Lemma  \ref{ksimplices}. So, its pre-image is projectively a line segment, as $f^{-1}$ is an isometry. Now suppose that  $(c,b)$ intersects $\gamma_1$ and $\gamma_2$ in two distinct points. In that case it follows that the pre-image of $(\ol{c},\ol{b})$  lies  inside  $\ol{\mathrm{conv}(p_1,p_2,p_3,p_4,p_5)}$. The collection of the points $\ol{a}$ for which we obtain such a pre-image forms a dense set of $\ol{\mathrm{conv}(c,q_1,q_4^\perp)}$. So,  by continuity of $f^{-1}$ we conclude that
\[
f^{-1}(\ol{\mathrm{conv}(q_1,q_2,q_4^\perp,q_5^\perp)})\subseteq \ol{\mathrm{conv}(p_1,p_2,p_3,p_4,p_5)}.
\]

It turns out that this situation yields the desired contradiction to prove our assertion in this case.
Let $\rho$ be in the relative interior of $\mathrm{conv}(q_1,q_2,q_5^\perp)$. Then $(\ol{\rho},\ol{q}_4^\perp)$ is a unique geodesic by Lemma~\ref{ksimplices}. Moreover, we have that the segment $(\ol{\rho},\ol{q}_4^\perp)$ is  parallel to $(\ol{q}_4,\ol{q}_4^\perp)$, that is to say
\[
\limsup_{ t\to 0}\,d_H((1-t)q_4^\perp+t\rho,(1-t)q_4^\perp+tq_4) <\infty\mbox{\quad and\quad }
\limsup_{ t\to 1}\,d_H((1-t)q_4^\perp+t\rho,(1-t)q_4^\perp+tq_4) <\infty.
\]
This implies that pre-images of $(\ol{\rho},\ol{q}_4^\perp)$ must also  be parallel  segments. As the pre-image of $ (\ol{q}_4,\ol{q}_4^\perp)$ is $(\ol{p}_4,\ol{p}_4^\perp)$ we find the pre-image of  $(\ol{\rho},\ol{q}_4^\perp)$ is  of the form $(\ol{p}_4,\ol{\sigma})$, with $\sigma$ on the segment $(p_3,p_5)$.  Since  $\rho$ was chosen arbitrarily, this shows that the pre-image of $\ol{\mathrm{conv}(q_1,q_2,q_4^\perp,q_5^\perp)}$ lies  in $\Delta(p_3,p_4,p_5)$, which is absurd. We therefore conclude that $f$ is projectively linear on $\Delta(p_4,p_5,p_6)$ as well.

In case  $p_3=p_6^\perp$ and  $f$ is not projectively linear on $\Delta(p_4,p_5,p_6)$,  then analogously we find that $\mathrm{conv}(p_1,p_2,p_3,p_4,p_5,p_6)$ is the interior of a 3-simplex and $\mathrm{conv}(q_1,q_2,q_3,q_4^\perp,q_5^\perp,q_6^\perp)$ is the interior of a pyramid.  Now applying the same arguments above to $f^{-1}$ yields the desired contradiction, which completes the proof.
\end{proof}

\begin{theorem}\label{t:connecting_simplices}
Let $\Delta(p_1,p_2,p_3)$ and $\Delta(p_4,p_5,p_6)$ be orthogonal simplices in $\ol{M}_+^\circ$. A bijective Hilbert's metric isometry $f\colon\ol{M}_+^\circ\to\ol{N}_+^\circ$ with $f(\ol{e})=\ol{e}$ is projectively linear on $\Delta(p_1,p_2,p_3)$ if and only if it is projectively linear on $\Delta(p_4,p_5,p_6)$.
\end{theorem}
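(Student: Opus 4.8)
The plan is to deduce the theorem from Proposition~\ref{proj on triangles} by a connectivity argument on the set of orthogonal simplices. Call two orthogonal simplices \emph{linked} if they share a segment $(\ol{r},\ol{r}^\perp)$, i.e.\ (after relabelling vertices) a vertex of one coincides with a vertex, or with the complement of a vertex, of the other. Proposition~\ref{proj on triangles} says exactly that projective linearity of $f$ transfers between linked simplices. Hence it suffices to prove the combinatorial statement that any two orthogonal simplices can be joined by a finite chain of orthogonal simplices in which consecutive members are linked; applying Proposition~\ref{proj on triangles} repeatedly along such a chain then gives the theorem.

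Two linking moves are available. First, any two orthogonal simplices sharing a common vertex $r$ are linked (they share $(\ol{r},\ol{r}^\perp)$), so one may keep a vertex $r$ fixed and replace the other two projections by an arbitrary splitting of $r^\perp$ into two nonzero orthogonal projections. Second, merging $p_1,p_2$ into $p_1+p_2=p_3^\perp$ while splitting $p_3=r+s$ links $\Delta(p_1,p_2,p_3)$ to $\Delta(p_1+p_2,r,s)$. The first move already shows that all orthogonal simplices with a prescribed vertex $r$ form a single linked clique $C(r)$, and that $C(r)$ and $C(r')$ meet (hence lie in one linked component) whenever $r\perp r'$ and $r+r'\neq e$, via the simplex $\Delta(r,r',(r+r')^\perp)$.

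Thus everything reduces to connecting first vertices: given $\Delta(p_1,p_2,p_3)$ and $\Delta(q_1,q_2,q_3)$, I would produce a chain of nonzero projections $p_1=r_0,r_1,\dots,r_m=q_1$, each distinct from $e$, with $r_i\perp r_{i+1}$ and $r_i+r_{i+1}\neq e$ for all $i$. The simplices $\Delta(r_i,r_{i+1},(r_i+r_{i+1})^\perp)$ then link the start to the target through the cliques $C(r_i)$. To build such a chain I distinguish two cases. If $(p_1\vee q_1)^\perp\neq 0$, any nonzero projection $r\le(p_1\vee q_1)^\perp$ is orthogonal to both $p_1$ and $q_1$; choosing $r\neq p_1^\perp,q_1^\perp$ (possible since $p_1^\perp$ and $q_1^\perp$ are nonminimal, being complements of vertices of orthogonal simplices) makes $p_1,r,q_1$ an admissible chain with $m=2$. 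If instead $p_1\vee q_1=e$, I would first pass to an intermediate $s\le p_1^\perp$ with $0\neq s\neq p_1^\perp$, so that $p_1\vee s=p_1+s\neq e$; taking $s$ small enough that also $s\vee q_1\neq e$ (possible because $q_1^\perp$ is nonminimal) reduces this case to the previous one applied to the pairs $(p_1,s)$ and $(s,q_1)$.

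The analytic content sits entirely in Proposition~\ref{proj on triangles}; the work here is the lattice-theoretic globalization, and the main obstacle is precisely the construction of this bridging chain of projections. The delicate points are ensuring that every simplex produced is a genuine orthogonal simplex, i.e.\ that all three projections are nonzero — this is where the nonminimality of the complements $p_1^\perp,q_1^\perp$ and of the bridging complements is used — and treating uniformly both the ``rank three'' case, where all vertices are atoms and only the first (rotation) move is available but $(p_1\vee q_1)^\perp$ is automatically nonzero, and the infinite-dimensional case, where the rank counting guaranteeing $s\vee q_1\neq e$ must be replaced by a direct argument in the projection lattice of the JBW-algebra.
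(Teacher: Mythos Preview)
Your reduction is exactly the paper's: Proposition~\ref{proj on triangles} transfers projective linearity across linked simplices, so the theorem follows once one shows that any two nonmaximal nontrivial projections $p,q$ are joined by a chain $p=r_0,\dots,r_m=q$ with $r_i\perp r_{i+1}$ and $r_i+r_{i+1}<e$. The paper isolates this as Lemma~\ref{equivalence} and proves the theorem from it in a few lines, just as you do.

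The gap is in your proof of the connectivity statement itself, and you identify it yourself. Your Case~1 is fine. In Case~2 you need $0<s<p_1^\perp$ with $s\vee q_1\neq e$, and the justification ``possible because $q_1^\perp$ is nonminimal'' does not go through by pure lattice reasoning: nonminimality of $q_1^\perp$ gives you subprojections of $q_1^\perp$, but you need a projection below $p_1^\perp$ whose complement still meets $q_1^\perp$, and when all four meets $p_1\wedge q_1$, $p_1\wedge q_1^\perp$, $p_1^\perp\wedge q_1$, $p_1^\perp\wedge q_1^\perp$ vanish there is no obvious lattice-theoretic handle. Your closing sentence concedes that ``a direct argument in the projection lattice of the JBW-algebra'' is still required; that is precisely the substance of the theorem.

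The paper's Lemma~\ref{equivalence} fills this gap by an entirely different, operator-algebraic route. After reducing via Lemma~\ref{reduction} to the generic-position case \eqref{e:inf}, it represents $M$ (excluding the $H_3(\O)$ case, handled directly) as the selfadjoint part of a real von Neumann algebra $R$, shows that the real von Neumann algebra $N$ generated by $p$ and $q$ admits a $2\times 2$ matrix unit built from the polar decomposition of $p^\perp qp$, and then proves $N_p\cong\R$ so that any nontrivial projection in $R_p$ yields a nontrivial projection in $N'\cap R$ commuting with both $p$ and $q$. This two-projection/matrix-unit analysis is what replaces your missing lattice step, and it is not elementary: it occupies an entire subsection of the paper.
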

Theorem \ref{t:connecting_simplices} is a simple consequence  from the following  lemma, which uses the following concept. If $p$ and $q$ are nonmaximal nontrivial projections, then by $p \approx q$ we mean that there exists a sequence of nonmaximal projections $p=p_1, \ldots, p_n = q$ such that $p_i \perp p_{i+1}$ and $p_i + p_{i+1} < e$ for $1 \leq i < n$. This defines an equivalence relation on the nonmaximal nontrivial projections in $\P(M)$. 

\begin{lemma}\label{equivalence}
If  $p$  and $q$ are  nonmaximal  nontrivial  projections in a JBW-algebra $M$, then $p\approx q$.
\end{lemma}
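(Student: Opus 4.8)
The plan is to show that all nonmaximal nontrivial projections lie in a single $\approx$-class; since $\approx$ is already an equivalence relation, it suffices to prove $p\approx q$ for an arbitrary pair. Throughout I will use the reformulation that a nontrivial projection $p$ is nonmaximal precisely when $p^\perp$ is not an atom, i.e. $p^\perp=s_1+s_2$ for two nonzero orthogonal projections $s_1,s_2$ (pairwise orthogonality and the sum being a projection are guaranteed by \cite[Proposition~2.18]{AS}). The first step is to record three elementary \emph{moves}, each producing a short $\approx$-chain. \emph{Comparability:} if $r\le p$ are both nonmaximal and nontrivial, split $p^\perp=s_1+s_2$; then $s_1$ is nonmaximal nontrivial (its complement $p+s_2$ is a sum of two nonzero orthogonal projections), while $s_1\perp p\ge r$ and $s_1+p=e-s_2<e$, so $p\approx s_1\approx r$. \emph{Nonzero meet:} if $m:=p\wedge q\ne 0$, then $m$ is nonmaximal nontrivial (its complement dominates the non-atom $p^\perp$) and $m\le p,q$, so $p\approx m\approx q$ by comparability. \emph{Join below $e$:} if $p\vee q\ne e$ and $p,q$ are incomparable, then $r_0:=(p\vee q)^\perp=p^\perp\wedge q^\perp$ is nonzero and nonmaximal nontrivial (as $p\vee q$ is not an atom, containing the proper nonzero subprojection $p$); it is orthogonal to both $p$ and $q$, and $r_0\ne p^\perp,q^\perp$ by incomparability, whence $p+r_0,\,q+r_0<e$ and $p\approx r_0\approx q$.

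Combining these moves gives $p\approx q$ whenever $p\wedge q\ne 0$ or $p\vee q\ne e$, so the only remaining case is the \emph{generic} one, $p\wedge q=0$ and $p\vee q=e$. Here I would reduce everything to the existence of a single well-placed projection. Concretely, suppose I can produce nonzero orthogonal projections $w\le p^\perp$ and $z\le q^\perp$ with in addition $z\lneq q^\perp$ (the latter is harmless, shrinking $z$ inside the non-atom $q^\perp$ if necessary). Then on the one hand $z\perp q$ with $z+q=e-(q^\perp-z)<e$, and $z$ is nonmaximal nontrivial, so $z\approx q$ by a single edge; on the other hand $w\le p^\perp\wedge z^\perp=(p\vee z)^\perp$ forces $p\vee z\ne e$, so $p\approx z$ by the join-below-$e$ move. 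Thus $p\approx z\approx q$, and the lemma is reduced to finding orthogonal nonzero subprojections of $p^\perp$ and of $q^\perp$.

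The main obstacle is exactly this existence statement: a priori $p^\perp$ and $q^\perp$ may be in ``skew'' position. To construct the orthogonal pair I would exploit that nonmaximality gives each of $p^\perp$ and $q^\perp$ two nonzero orthogonal pieces, so the ambient algebra has room, together with the comparison theory of projections in a JBW-algebra: after splitting off a central projection one may compare $p^\perp$ and $q^\perp$ and locate inside the larger a subprojection orthogonal to a piece of the smaller. Equivalently, writing $p^\perp=a_1+a_2$, if some $a_i\vee q\ne e$ then any nonzero $z\le(a_i\vee q)^\perp\le q^\perp$ works with $w=a_i$; the real content is to rule out the fully degenerate configuration in which every piece of $p^\perp$ joins with $q$ to $e$ (and symmetrically), which cannot persist once both complements genuinely split into two nonzero orthogonal projections. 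The one place demanding care is the type $I_2$ situation, where nontrivial projections tend to be simultaneously atoms and coatoms: in a spin factor there are no nonmaximal nontrivial projections at all, so the statement is vacuous there, and the comparison argument absorbs the type $I_2$ contributions arising from the center.
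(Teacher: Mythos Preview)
Your elementary moves and the reduction in the first two paragraphs are correct and pleasant: comparability, nonzero meet, and join-below-$e$ do combine to reduce the lemma to the ``generic'' configuration $p\wedge q=0$, $p\vee q=e$, and then to the existence of nonzero orthogonal $w\le p^\perp$, $z\le q^\perp$ (with $z\lneq q^\perp$, which is harmless).  This reduction is genuinely different from, and more transparent than, the paper's Lemma~\ref{reduction}, which reduces instead to finding a nontrivial projection that \emph{operator commutes} with both $p$ and $q$.

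The gap is that you have not proved the existence of such a pair $(w,z)$, and your sketch does not close it.  Comparison theory in JBW-algebras only tells you that, after splitting by a central projection, $p^\perp$ is \emph{equivalent} to a subprojection of $q^\perp$ (or vice versa); equivalence gives you nothing about orthogonality, so ``locate inside the larger a subprojection orthogonal to a piece of the smaller'' is not something comparison provides.  Your alternative formulation---rule out that every nonzero $a\le p^\perp$ satisfies $a\vee q=e$---is exactly the heart of the matter, but the assertion that this ``cannot persist'' is unjustified.  In finite factors a trace (or rank) argument does the job via Kaplansky's formula, and in $B(H)$ a codimension count works; but in a properly infinite setting (e.g.\ a type~III JBW-factor) there is no such dimension obstruction, and it is not at all clear how to exclude the fully degenerate configuration by lattice or comparison arguments alone.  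Your final remark about type~$I_2$ is fine but beside the point: the difficulty lies in the properly infinite part, not in the $I_2$ part.

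For comparison, the paper handles the generic case by completely different, operator-algebraic means.  After reducing (via a version of your moves) to the case where all four meets $p\wedge q$, $p\wedge q^\perp$, $p^\perp\wedge q$, $p^\perp\wedge q^\perp$ vanish, it represents $M$ as the selfadjoint part of a real von Neumann algebra $R$ and studies the (real) von Neumann algebra $N\subseteq R$ generated by $p$ and $q$.  The generic-position hypothesis makes $p^\perp qp$ have dense range and zero kernel on $pH$, so its polar decomposition produces a partial isometry with initial projection $p$ and final projection $p^\perp$; this yields a $2\times 2$ matrix unit and an isomorphism $N\cong \mathbb{M}_2(N_p)$.  One then shows $N_p$ is trivial, hence $N'\cap R$ contains a nontrivial projection commuting with both $p$ and $q$, and the lemma follows.  (Incidentally, once one has such a commuting projection $c$, your pair can be read off directly: one of $(cp^\perp,\,c^\perp q^\perp)$ or $(c^\perp p^\perp,\,cq^\perp)$ is a pair of nonzero orthogonal subprojections of $p^\perp$ and $q^\perp$.)  So your reduction is compatible with the paper's endgame, but the missing existence step is precisely where the hard work lives.
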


If we assume Lemma \ref{equivalence}  for the moment, the proof of Theorem \ref{t:connecting_simplices} goes as follows.
\begin{proof}[Proof of Theorem \ref{t:connecting_simplices}]
By Proposition~\ref{proj on triangles}, if two orthogonal simplices have a projection in common, then $f$ is projectively linear on one of them if and only if it is projectively on the other. So, it suffices to connect any two orthogonal simplices with a chain of orthogonal  simplices each having one projection in common. Note that orthogonal simplices are determined by two nonmaximal nontrivial projections $p_1$ and $p_2$ such that $p_1 \perp p_2$ and $p_1 + p_2 < e$: the third projection is then $(p_1 + p_2)^\perp$. Hence a chain of orthogonal simplices having one projection in common, connecting the projections $p$ and $q$, corresponds to a sequence  of nonmaximal nontrivial projections $p=p_1, \ldots, p_n = q$ such that $p_i \perp p_{i+1}$ and $p_i + p_{i+1} < e$ for $1 \leq i < n$. By Lemma \ref{equivalence} we know that such a sequence always exist, and hence we are done.
\end{proof}

The proof of Lemma \ref{equivalence} is quite technical and will be given in the next section.
However, for particular JB-algebras such as $B(H)_{\mathrm{sa}}$ and Euclidean Jordan algebras,
it is fairly easy to show that  Lemma \ref{equivalence} holds. To do this we make the following basic observation.
\begin{lemma}\label{reduction}
Let $M$ be a JBW-algebra and $p,q \in \P(M)$ be nonmaximal and nontrivial.
\begin{enumerate}[(i)]
\item If $p \perp q$, then $p \approx q$.
\item If $p \leq q$, then $p \approx q$.
\item If $p$ and $q$ operator commute, then $p \approx q$.
\end{enumerate}
\end{lemma}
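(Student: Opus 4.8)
The plan is to prove the three parts in order by exhibiting explicit short chains of nonmaximal nontrivial projections whose consecutive terms are orthogonal and sum to strictly less than $e$, and then to reduce (iii) to (i) and (ii). I will repeatedly use that a nontrivial projection $r$ is nonmaximal exactly when $r^\perp$ is not an atom, i.e.\ when $r^\perp$ has a nonzero proper subprojection, equivalently when some projection lies strictly between $r$ and $e$.

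For (i) I would distinguish the cases $p+q<e$ and $p+q=e$. If $p+q<e$, the pair $p,q$ is already an admissible chain, so $p\approx q$. If $p+q=e$, then $q=p^\perp$; nonmaximality of $p$ forces $p^\perp$ to be a non-atom and nonmaximality of $q=p^\perp$ forces $p$ to be a non-atom, so I can pick nonzero projections $r<p^\perp$ and $s<p$ and use the length-four chain $p,r,s,p^\perp$. Consecutive orthogonality is immediate from $r\le p^\perp$ and $s\le p$ (which in particular gives $r\perp s$), the inequalities $p+r<e$, $r+s<e$, $s+p^\perp<e$ follow from $e-(p+r)=p^\perp-r>0$, $e-(r+s)=(p-s)+(p^\perp-r)>0$, and $e-(s+p^\perp)=p-s>0$, and $r,s$ are nonmaximal since $r^\perp$ properly contains the nonzero projection $p$ while $s^\perp$ properly contains the nonzero projection $p^\perp$.

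For (ii), nonmaximality of $q$ makes $q^\perp$ a nonzero non-atom, so I choose a nonzero projection $r<q^\perp$; then $r\perp q$ and $r\perp p$ (as $p\le q$), and $p,r,q$ is admissible because $e-(p+r)=p^\perp-r\ge q^\perp-r>0$ and $e-(r+q)=q^\perp-r>0$, with $r$ nonmaximal since $q$ is a nonzero proper subprojection of $r^\perp$. For (iii), operator commutativity lets me work inside the associative subalgebra generated by $p,q,e$, which is a $C(K)$; there $pq$ is a projection with $(pq)^\perp=p^\perp+pq^\perp$. If $pq=0$ then $p\perp q$ and (i) applies; if $pq\in\{p,q\}$ then $p\le q$ or $q\le p$ and (ii) applies. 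Otherwise $0<pq<p$ and $0<pq<q$, and $(pq)^\perp=p^\perp+pq^\perp$ has the nonzero proper subprojection $p^\perp$ (proper because $pq^\perp\neq 0$), so $pq$ is nonmaximal and nontrivial; applying (ii) to $pq\le p$ and to $pq\le q$ gives $pq\approx p$ and $pq\approx q$, whence $p\approx q$ by transitivity of $\approx$.

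The only genuinely delicate point is the subcase $p+q=e$ of (i): since no projection can be orthogonal to both $p$ and $p^\perp$, a chain of length four is unavoidable, and one must check that the inserted projections are themselves nonmaximal and nontrivial, which is precisely where both hypotheses ``$p$ nonmaximal'' and ``$q=p^\perp$ nonmaximal'' are consumed. The remaining verifications are routine once (iii) has been reduced, via the associative subalgebra, to manipulations of commuting projections.
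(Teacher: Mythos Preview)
Your argument is correct and follows essentially the same approach as the paper. For (i) you build the same length-four chain (the paper writes it as $p \approx q_0 \approx p_0 \approx q$ with $q_0<q=p^\perp$ and $p_0<p$), and for (iii) you make the same reduction to (i) and (ii) via $pq$ inside the associative subalgebra; the only minor difference is that for (ii) you construct a direct length-three chain $p,r,q$ with $r<q^\perp$, whereas the paper simply invokes (i) twice to get $p\approx q^\perp\approx q$.
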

\begin{proof}
For the first assertion, note that if $q \neq p^\perp$ we are done. Also, if $q = p^\perp$, then by nonmaximality of $q$ and $p$, there exist projections $0<p_0< p$ and $0<q_0<q$, so that  $p \approx q_0 \approx p_0 \approx q$. The second assertion follows from $(i)$, as  $p \approx q^\perp \approx q$. To prove the last one recall that the JBW-algebra generated by $p$ and $q$ is associative by \cite[Proposition 1.47]{AS}, and hence it is isomorphic to $C(K)$ for some compact Hausdorff space $K$. By part $(i)$ we may assume $pq \neq 0$, and then $p \approx pq \approx q$ by part $(ii)$.
\end{proof}
Let us now show that Lemma \ref{equivalence} holds  in case $M=B(H)_{\mathrm{sa}}$.
if $\dim H\leq 2$, then all projections in $\P(M)$ are maximal.
So, assume $\dim H\geq 3$. In that case, any two distinct rank 1 projections $p$ and $q$ are equivalent, because the orthogonal complements of the ranges of $p$ and $q$ have codimension 1, and hence their intersection is nonempty. Let $r$ be the orthogonal projection on the intersection. Note that $r$ is nonmaximal, as the range of $r$ has codimension at least 2. Then $p\perp r$ and $r\perp q$ and hence  $p\approx r\approx q$ by Lemma \ref{reduction}$(i)$. To compete the proof we remark that any nonmaximal projection $p$ with rank at least 2 is equivalent to a rank 1 projection. Simply take $x\in H$ in the range of $p$. Then the orthogonal projection $p_x$ on the span of $x$ satisfies $p_x\leq p$, and hence $p_x\approx p$ by Lemma \ref{reduction}$(ii)$.

We see from Lemma \ref{reduction}$(iii)$ that if the center $Z(M)$ is nontrivial, then any nontrivial projection $z\in Z(M)$ yields $p\approx z\approx q$. Indeed, in this case $z^\perp$ also operator commutes with $p$ and $q$, and we are done if either $z$ or $z^\perp$ is nonmaximal. Suppose that they are both maximal. Then they are also both minimal, and therefore $pz\le z$, forcing $pz\in\{0,z\}$, and $pz^\perp\le z^\perp$, forcing $pz^\perp\in\{0,z^\perp\}$. Combining these identities yields
\[
p=pz+pz^\perp\in\{0,z,z^\perp,e\}
\]
which contradicts the nonmaximality of $p$. So, we may assume that $Z(M)$ is trivial, i.e., $M$ is a factor. Thus, the verify that Lemma \ref{equivalence} holds for Euclidean Jordan algebras, we only need to check the simple ones.

\begin{lemma}\label{symcones}
If $M$ is a simple Euclidean Jordan algebra of rank at least 3 and $p,q \in \P(M)$ are nonmaximal and nontrivial, then $p\approx q$.
\end{lemma}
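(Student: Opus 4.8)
The plan is to reduce the statement to the case of minimal idempotents and then to connect any two of these directly. Recall that in a Euclidean Jordan algebra the projections are exactly the idempotents, every projection is an orthogonal sum of minimal idempotents, and the number of summands is the rank of the projection (see \cite{FK}). Since $M$ has rank $r\geq 3$, a projection is maximal precisely when it has rank $r-1$, so every minimal idempotent is nonmaximal and nontrivial. Hence, if $p$ has rank at least $2$, I would pick a minimal idempotent $p_0\leq p$; both are nonmaximal and nontrivial, so $p_0\approx p$ by Lemma~\ref{reduction}$(ii)$. Reducing $q$ in the same way, it suffices to prove $p\approx q$ when $p$ and $q$ are minimal idempotents.

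So let $p\neq q$ be minimal idempotents. First I would observe that $p\wedge q=0$: since $p\wedge q\leq p$ and $p$ is minimal, $p\wedge q\in\{0,p\}$, and $p\wedge q=p$ would force $p\leq q$, hence $p=q$. Using the modularity of the projection lattice of $M$, expressed through the rank function as $\mathrm{rank}(p\vee q)+\mathrm{rank}(p\wedge q)=\mathrm{rank}(p)+\mathrm{rank}(q)$, this gives $\mathrm{rank}(p\vee q)=2$. As $r\geq 3$, the projection $(p\vee q)^\perp$ is therefore nonzero, and I can choose a minimal idempotent $s\leq (p\vee q)^\perp$.

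With this $s$ in hand the chain is immediate. Since $s\leq(p\vee q)^\perp\leq p^\perp$ we have $s\perp p$, and as $p+s$ is then an idempotent of rank $\mathrm{rank}(p)+\mathrm{rank}(s)=2<r=\mathrm{rank}(e)$ we get $p+s\neq e$, i.e.\ $p+s<e$; as both $p$ and $s$ are nonmaximal minimal idempotents this yields $p\approx s$. Symmetrically $s\perp q$ and $q+s<e$, so $s\approx q$, and therefore $p\approx q$. Combined with the reduction above this proves the lemma.

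The only nonroutine ingredient is the inequality $\mathrm{rank}(p\vee q)\leq 2$, equivalently that two distinct minimal idempotents admit a common nonzero orthogonal projection, and I expect this to be the main point to pin down. For the classical simple algebras $\mathrm{Herm}(n,\mathbb{R})$, $\mathrm{Herm}(n,\mathbb{C})$ and $\mathrm{Herm}(n,\mathbb{H})$ it is the familiar statement that two distinct lines in $\mathbb{K}^n$ span a $2$-dimensional subspace whose orthogonal complement is nonzero when $n\geq 3$, which is exactly the argument already used above for $B(H)_{\mathrm{sa}}$. For the exceptional Albert algebra $\mathrm{Herm}_3(\mathbb{O})$, where $r=3$, it is the projective-plane fact that the join of two distinct points is a line and its orthogonal complement is a point. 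In every case it is a consequence of the modularity of the idempotent lattice of a simple Euclidean Jordan algebra, so the argument is uniform and no case analysis of the classification is needed.
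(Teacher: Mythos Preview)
Your reduction to minimal idempotents and the chain $p\approx s\approx q$ via a minimal idempotent $s\leq(p\vee q)^\perp$ is correct, and the strategy parallels the paper's: both proofs reduce to primitive idempotents and both hinge on the same key fact, namely that two distinct primitive idempotents in a simple Euclidean Jordan algebra of rank $\geq 3$ do not ``fill up'' $e$. You phrase this as $\mathrm{rank}(p\vee q)\leq 2$; the paper phrases it as the unit of the Jordan subalgebra generated by $p$ and $q$ (isomorphic to $H_2(\mathbb{R})$ by \cite[Proposition~1.6]{FK}) being strictly smaller than $e$. The paper then invokes Lemma~\ref{reduction}$(iii)$ with this unit as the operator-commuting projection, whereas you go straight to the definition of $\approx$ with an orthogonal $s$; this is a cosmetic difference.

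The substantive difference is in how the key fact is established. The paper gives a self-contained Peirce-space computation inside $H_n(R)$, valid uniformly for $R\in\{\mathbb{R},\mathbb{C},\mathbb{H},\mathbb{O}\}$: no element of the Peirce $1/2$-space of $e_{11}$ can square to $I_n$ when $n\geq 3$, so the embedded $I_2$ cannot equal $e$. Your argument for the classical algebras (two lines span a plane) is fine; your treatment of $H_3(\mathbb{O})$ via the incidence structure of $\mathbb{O}P^2$ is correct but is a citation rather than a proof.

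The one genuine gap is your final sentence. Modularity of a lattice is the identity $p\leq r\Rightarrow p\vee(q\wedge r)=(p\vee q)\wedge r$; it does not by itself give the dimension formula you use. One does recover the formula for the \emph{height} function in a modular lattice of finite length, but you would still need to prove modularity of the idempotent lattice and identify height with Jordan rank, and you have done neither without the classification. So the claim that ``no case analysis of the classification is needed'' is not substantiated as written; drop it, and the case-by-case justification you already gave makes the proof complete.
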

\begin{proof}
Using the classification of simple Euclidean Jordan algebras we know that $M= H_n(R)$ where $n\geq 3$ and $R=\mathbb{R}$, $\mathbb{C}$ or $\mathbb{H}$, or $M = H_3(\O)$.

By Lemma~\ref{reduction}$(ii)$ we may assume that $p$ and $q$ are primitive. It suffices to show the existence of a nontrivial nonmaximal $z\in\P(M)$ that operator commutes with $p$ and $q$ by the above remarks. We know from \cite[Corollary~IV.2.4]{FK} that there exists $w\in M$ such that $w^2=e$ and $U_w(p) =e_{11}$. Note that $U_w e =w^2 =e$, and hence it is a Jordan isomorphism by Corollary \ref{orderisoms}. So, we may also  assume that $p = e_{11}$. The Jordan algebra generated by $p$ and $q$ is isomorphic to $H_2(\R)$ by \cite[Proposition~1.6]{FK} and the isomorphism in the proof of \cite[Proposition~1.6]{FK} sends $e_{11} \in H_2(\R)$ to $p = e_{11} \in M$.

If $I_2 \in H_2(\R)$ corresponds to a nontrivial projection $z$ under this isomorphism, then $z$ operator commutes with $p$ and $q$ and we are done. We will show that it is impossible that $I_2 \in H_2(\R)$ corresponds to $e \in M$. In that case, the element $s = e_{12}+e_{21} \in H_2(\R)$ is in the Peirce 1/2 eigenspace of $e_{11}$ and satisfies $s^2 = I_2$. However, in $H_n(R)$, elements in the Peirce 1/2 eigenspace of $p$ are of the form
\[
A=\left(\begin{array}{cccc}
0                & a_{12} & \ldots & a_{1n}\\
a^*_{12} & 0          & \ldots & 0\\
\vdots & \vdots & \ddots  &  \vdots \\
a^*_{1n}&  0 & \ldots &  0\\
 \end{array}\right).
 \]
 The diagonal of $A^2$ has entries $A^2_{11} = \sum_{i=2}^n |a_{1i}|^2$ and $A_{ii}^2= |a_{1i}|^2$ for $i=2,\ldots,n$, which is not equal to $e=I_n$ for any choice of $a_{12},\ldots,a_{1n} \in R$, as $n\geq 3$.
\end{proof}

\subsection{Proof of Lemma \ref{equivalence}}
The proof of Lemma \ref{equivalence} requires a number of steps. First note that by Lemma \ref{reduction}$(iii)$, it suffices to find a nontrivial projection $z \in\mathcal{P}(M)$ that operator commutes with both $p$ and $q$. Hence we may assume that
\begin{equation}\label{e:inf}
p \wedge q = p \wedge q^\perp = p^\perp \wedge q = p^\perp \wedge q^\perp = 0.
\end{equation}
Indeed, suppose one of them is nonzero, then it operator commutes with $p$ or $p^\perp$ and $q$ or $q^\perp$, and hence it operator commutes with $p$ and $q$.

The idea of the rest of the proof is to use the theory of von Neumann algebras, and so we would like to view $M$ as the set of selfadjoint elements of a von Neumann algebra. Note that if $M$ is of type $I_2$, then \cite[Theorem~6.1.8]{HO} implies that $M$ is a spin factor $H\oplus\R$. However, in a spin factor all nonzero projections are maximal, so $M$ is not of type $I_2$. As mentioned, the procedure will be divided into several steps. In the case where $M$ is the selfadjoint part of a von Neumann algebra, the proof of this lemma is given in Step 2.

\subparagraph{Step 1:}
We can assume that $M$ is not isomorphic to $H_3(\mathbb{O})$ by Lemma~\ref{symcones}. Then by \cite[Theorem~7.2.7]{HO} we have that $M$ is a $JW$-algebra, that is, it can be represented as a $\sigma$-weakly closed Jordan subalgebra of the selfadjoint operators on a complex Hilbert space. By \cite[Theorem~7.3.3]{HO}, it follows that
\[
M = W^*(M)^\alpha_{sa} = \{ x \in W^*(M): \alpha(x) = x = x^* \}
\]
for some von Neumann algebra $W^*(M)$ and a $*$-anti-automorphism $\alpha$ of $W^*(M)$ of order 2. Now $M$ is a subset of a von Neumann algebra, but the $*$-anti-automorphism $\alpha$ is a problem, which we will eliminate.

Let $R := \{ x \in W^*(M): \alpha(x) = x^* \}$. Then $M = R_{sa}$, and by \cite[Theorem~7.3.2]{HO} we have that $R$ is a $\sigma$-weakly closed real $*$-algebra and $W^*(M) = R \oplus i R$. It follows from \cite[Definition~6.1.1]{Li} that $R$ is a \emph{real $W^*$-algebra}. By \cite[Proposition~6.1.2]{Li}, $R$ is isomorphic to a \emph{real von Neumann algebra}, that is, a $\sigma$-weakly closed $*$-subalgebra of $B(H)$, where $H$ is a \emph{real} Hilbert space. Or equivalently, a $*$-subalgebra of $B(H)$ which has a pre-dual. So, we have succeeded at viewing $M$ as the selfadjoint elements of a von Neumann algebra. Unfortunately, it is a real von Neumann algebra instead of a complex one, which will pose some additional difficulties.

\subparagraph{Step 2:}
Let $N \subseteq R$ be the real von Neumann algebra generated by $p$ and $q$. In the case where $M$ is the selfadjoint part of a von Neumann algebra, the reader can regard $N$ as the von Neumann algebra generated by $p$ and $q$, and $R=M\oplus i M$ here. We denote by  $N'$ the commutant of $N$. That is,
\[
N':=\left\{x\in B(H)\colon xy=yx\ \mbox{for all } y\in N\right\}.
\]
It suffices to find a nontrivial projection $z \in N' \cap R$, because then both $z$ and $z^\perp$ commute with $p$ and $q$, and hence operator commute with $p$ and $q$ by \cite[Proposition 1.49]{AS}. Similarly to the discussion preceding Step 1, we can conclude that either $z$ or $z^\perp$ is nonmaximal. So, we may assume that $N' \cap N$ contains no nontrivial projections. We will now generalize the proof of \cite[Theorem~V.1.41]{Tak}, so that it will also be applicable to the real von Neumann algebra case. From equation \eqref{e:inf}, we obtain that $p^\perp q p$ maps $pH$ injectively onto a dense subspace of $p^\perp H$. Let $uh$ be the polar decomposition of $p^\perp qp$. By \cite[Proposition~4.3.4]{Li} we have that $u,h \in N$. Then $u$ is a partial isometry with initial space $pH$ and final space $p^\perp H$, and so $u^* u = p$ and $uu^* = p^\perp$. We will use this partial isometry $u$ to make a matrix unit $\{ e_{11}, e_{12}, e_{21}, e_{22} \}$. That is, the set of elements $\{e_{11},e_{12},e_{21},e_{22}\}$ satisfies the properties
\[
e_{11}+e_{22}=e,\quad e_{ij}^*=e_{ji},\quad\mbox{and}\quad e_{ij}e_{kl}=\delta_{jk}e_{il}\quad\mbox{for }1\le i,j,k,l\le 2.
\]
Let
$$ e_{11} := p, \quad e_{21} := u, \quad e_{12} := u^*, \quad e_{22} := p^\perp, $$
We will use the following notation. If $M$ is an algebra with projection $p \in M$, then we denote the subalgebra $p M p$ by $M_p$. Furthermore, by $\mathbb{M}_2(M_p)$ we mean the $2\times 2$ matrices whose entries are elements of $M_p$.

\begin{lemma}\label{l:matrix}
If $M$ is a (real) von Neumann algebra with a matrix unit $\{ e_{11}, e_{12}, e_{21}, e_{22} \}$, then $M \cong \mathbb{M}_2( M_{e_{11}})$.
\end{lemma}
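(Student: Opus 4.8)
The plan is to write down the evident matrix-coordinate map explicitly and verify by direct computation that it is a $*$-isomorphism. First I would record that the corner $M_{e_{11}} = e_{11} M e_{11}$ is itself a (real) von Neumann algebra, being the reduction of $M$ by the projection $e_{11}$; reductions of real von Neumann algebras are again real von Neumann algebras, so the passage to the real case requires no extra work here, and $\mathbb{M}_2(M_{e_{11}})$ is then a von Neumann algebra as well. I then define $\phi \colon M \to \mathbb{M}_2(M_{e_{11}})$ by declaring the $(i,j)$-entry of $\phi(x)$ to be $x_{ij} := e_{1i} x e_{j1}$. The matrix-unit relations $e_{ij}e_{kl} = \delta_{jk}e_{il}$ give $e_{11}e_{1i} = e_{1i}$ and $e_{j1}e_{11} = e_{j1}$, so each $x_{ij}$ indeed lies in $e_{11}Me_{11} = M_{e_{11}}$, and the map is well defined. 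For the inverse I would take $\psi\colon (a_{ij}) \mapsto \sum_{i,j} e_{i1} a_{ij} e_{1j}$.

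The bijectivity is then two short computations driven by the relation $e_{i1}e_{1i} = e_{ii}$ and, crucially, the completeness $e_{11}+e_{22} = e$. Indeed $\psi(\phi(x)) = \sum_{i,j} e_{i1}(e_{1i} x e_{j1})e_{1j} = \sum_{i,j} e_{ii} x e_{jj} = (e_{11}+e_{22})x(e_{11}+e_{22}) = x$, while using $e_{1k}e_{i1} = \delta_{ki}e_{11}$ and $e_{11}a_{kl}e_{11} = a_{kl}$ one gets $\phi(\psi(a))_{kl} = \sum_{i,j} (e_{1k}e_{i1})a_{ij}(e_{1j}e_{l1}) = e_{11}a_{kl}e_{11} = a_{kl}$. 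Linearity of $\phi$ is clear, and $*$-preservation is immediate: since $e_{i1}^* = e_{1i}$, one has $\phi(x^*)_{ij} = e_{1i} x^* e_{j1} = (e_{1j} x e_{i1})^* = (\phi(x)_{ji})^*$, which is precisely the $(i,j)$-entry of the conjugate transpose, so $\phi(x^*) = \phi(x)^*$.

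The one step that genuinely uses the full matrix-unit structure, and which I expect to be the heart of the argument, is multiplicativity. Here the $(i,j)$-entry of $\phi(x)\phi(y)$ is $\sum_k e_{1i} x e_{k1}\, e_{1k} y e_{j1} = e_{1i} x \bigl(\sum_k e_{kk}\bigr) y e_{j1} = e_{1i}\, x e y\, e_{j1} = (\phi(xy))_{ij}$, where the completeness $\sum_k e_{kk} = e$ is exactly what reassembles the cross terms into $xy$. This is routine but is the place where all the defining relations of the matrix unit are needed simultaneously. Having shown that $\phi$ is a bijective $*$-homomorphism, it is automatically a normal $*$-isomorphism, and hence $M \cong \mathbb{M}_2(M_{e_{11}})$ as (real) von Neumann algebras, which is the assertion.
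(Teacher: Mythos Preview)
Your proof is correct and follows exactly the same approach as the paper: you define the same map $\phi(x)_{ij} = e_{1i} x e_{j1}$ with inverse $\psi(a) = \sum_{i,j} e_{i1} a_{ij} e_{1j}$, and simply carry out in detail the verifications that the paper leaves to the reader.
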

\begin{proof}
The reader can easily verify that the map $\phi \colon M \to \mathbb{M}_2(M_{e_{11}})$ given by $\phi(x)_{ij} := e_{1i} x e_{j1}$ is a $*$-homomorphism with inverse $\theta \colon \mathbb{M}_2(M_{e_{11}}) \to M$ defined by $\theta(y_{ij}) := \sum_{i,j=1}^2 e_{i1} y_{ij} e_{1j}$.
\end{proof}

We now apply Lemma~\ref{l:matrix} for $M=N$ and $M=R$, which yields that $N\cong\mathbb{M}_2(N_p)$ and $R\cong \mathbb{M}_2(R_p)$. Moreover, since we used the same matrix unit, the inclusion $N\subseteq R$ corresponds to the natural embedding $\mathbb{M}_2(N_p) \subseteq \mathbb{M}_2(R_p)$. It is straightforward to verify that
\begin{equation}\label{e:comm_eq}
N' \cap R = \left\{ \left(
                                          \begin{array}{cc}
                                            x & 0 \\
                                            0 & x \\
                                          \end{array}
                                        \right)
: x \in N'_p \cap R_p \right\}.
\end{equation}
The projection $p = e_{11}$ is nonmaximal, so there exists a nontrivial projection in $R$ which dominates $p$, and has to be of the form
\[
\left(
                  \begin{array}{cc}
                    p & 0 \\
                    0 & z \\
                  \end{array}
                \right)
\]
for some nontrivial projection $z \in \mathcal{P}(R_p)$.

We claim that it now suffices to show that $N_p$ is a trivial von Neumann algebra. Indeed, in that case $N'_p \cap R_p = R_p$, and so by \eqref{e:comm_eq},
\[
\left(
  \begin{array}{cc}
    z & 0 \\
    0 & z \\
  \end{array}
\right)
\in N' \cap R
\]
is a nontrivial projection, as desired. In the case where $M$ is the selfadjoint part of a von Neumann algebra, we can apply \cite[Theorem~V.1.41(ii)]{Tak} to conclude that $N$ is of type $I_2$, and since $N' \cap N$ contains no nontrivial projections, the spectral theorem implies that $N' \cap N$ is trivial and hence $N$ is a factor. Therefore, we must have $N \cong \mathbb{M}_2(\mathbb{C})$. Since we also have that $N \cong \mathbb{M}_2(N_p)$, it follows that $N_p \cong \mathbb{C}$. In the case where $N\subseteq R$ in a real von Neumann algebra, we have to do some more work to show that $N_p \cong \R$.

\subparagraph{Step 3:}
We will need the following lemma.
\begin{lemma}
$N_p$ is generated by $p$ and $pqp$.
\end{lemma}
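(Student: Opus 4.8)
The plan is to exploit the identification $N \cong \mathbb{M}_2(N_p)$ from Lemma~\ref{l:matrix}. Write $c := pqp$ and let $B \subseteq N_p$ be the (real) von Neumann subalgebra generated by $p$ and $c$. Since $B$ is generated by the single self-adjoint element $c$ together with its unit $p$, it is abelian, and obviously $B \subseteq N_p$. To prove the reverse inclusion it suffices to show that the generators $p$ and $q$ of $N$ are mapped by the isomorphism $\phi\colon N \to \mathbb{M}_2(N_p)$, $\phi(x)_{ij} = e_{1i} x e_{j1}$, into the von Neumann subalgebra $\mathbb{M}_2(B)$; for then $\phi(N) \subseteq \mathbb{M}_2(B)$, whence $\mathbb{M}_2(N_p) = \phi(N) \subseteq \mathbb{M}_2(B) \subseteq \mathbb{M}_2(N_p)$ and therefore $N_p = B$.

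First I would compute the entries of $\phi(p)$ and $\phi(q)$. Using $e_{11} = p$, $e_{21} = u$, $e_{12} = u^*$ one finds at once that $\phi(p) = \mathrm{diag}(p,0) \in \mathbb{M}_2(B)$, and that the diagonal entry $\phi(q)_{11} = pqp = c$ lies in $B$ by definition. The off-diagonal entries are handled through the polar decomposition $p^\perp q p = uh$: since $u^*u = p$, $uu^* = p^\perp$ and $p^\perp u = u$ (so that $u^* p^\perp = u^*$), we get $\phi(q)_{21} = u^* q p = u^*(p^\perp q p) = u^* u h = h$, and $\phi(q)_{12} = h$ by self-adjointness of $\phi(q)$. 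Moreover $h = (c - c^2)^{1/2}$, because $(p^\perp q p)^*(p^\perp q p) = pqp^\perp qp = pqp - (pqp)^2 = c - c^2$; as $c - c^2 \in B$ is positive and $B$ is weakly closed, its positive square root $h$ also lies in $B$.

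The one remaining, and genuinely delicate, point is to show that the corner $d := \phi(q)_{22} = u^* q u$ lies in $B$; this is where the assumption \eqref{e:inf} must be used. Here I would use that $\phi(q)$ is a projection in $\mathbb{M}_2(N_p)$, so reading off the $(2,2)$- and $(2,1)$-entries of $\phi(q)^2 = \phi(q)$ yields the relations $d^2 + h^2 = d$ and $dh = h - hc$. Since $B$ is abelian, $h$ commutes with $c$, and the second relation rearranges to $(d - (p - c))\,h = 0$. Finally, \eqref{e:inf} forces $h$ to be injective: indeed $\ker(c) = 0$, because $pqpv = 0$ with $v \in pH$ gives $qv = 0$, i.e.\ $v \in \mathrm{ran}(p \wedge q^\perp) = 0$, and similarly $\ker(p - c) = 0$ corresponds to $p \wedge q = 0$, so $\ker h = \ker(c - c^2) = 0$. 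An injective positive operator has dense range, hence $(d - (p-c))h = 0$ gives $d = p - c \in B$. This completes the verification $\phi(q) \in \mathbb{M}_2(B)$, and the inclusion $N_p \subseteq B$ follows as explained above. The main obstacle is precisely this identification of $u^* q u$: the projection identity alone only determines $d$ up to the quadratic $d^2 - d + h^2 = 0$, and it is the genericity condition \eqref{e:inf}, via the injectivity of $h$, that selects the correct root $d = p - pqp$.
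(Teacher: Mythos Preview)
Your proof is correct, but it takes a considerably more elaborate route than the paper's. The paper argues directly: the $*$-algebra generated by the projections $p,q$ is the linear span of alternating words $\cdots pqpq\cdots$; compressing by $p$ on both sides (a weakly continuous operation) shows that $N_p$ is the weak closure of the span of words beginning and ending with $p$, and any such word equals $(pqp)^n$ for some $n\ge 0$. That is the whole argument, and it does not use the standing hypothesis \eqref{e:inf} at all.

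Your approach instead passes through the matrix picture $N\cong\mathbb{M}_2(N_p)$, computes the entries of $\phi(q)$ explicitly, and then has to identify the corner $u^*qu$. You handle this correctly via the projection relations and the injectivity of $h$, but note that this last step genuinely requires \eqref{e:inf}; without it $h$ can have nontrivial kernel and the quadratic $d^2-d+h^2=0$ no longer pins down $d$. So your proof, while valid in context, proves a weaker statement than the paper's. On the other hand, your computation yields the explicit formula $u^*qu = p - pqp$, which is a pleasant byproduct the paper's argument does not provide.
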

\begin{proof}
Taking products of $p$ and $q$ repeatedly yields expressions of the form $ \cdots pqpqpq\cdots$. For $r,s \in \{p,q\}$, let $Q(r,s)$ be the set of such expressions that start with $r$ and end with $s$. It follows that $N$ is the closed linear span of $Q(p,p) \cup Q(p,q) \cup Q(q,p) \cup Q(q,q)$. Hence $N_p$ is the closed linear span of $Q(p,p)$. Since $(pqp)^n = (pq)^{n-1} (pqp)$, it follows that $Q(p,p) = \{p\} \cup \{(pqp)^n: n \geq 1 \}$.
\end{proof}
By the above lemma, $N_p$ is generated by $p$ and $pqp$. Since $p$ is the identity on $N_p$, it is commutative and contains $C_\R(\sigma(pqp))$, the continuous real-valued functions on $\sigma(pqp)$, by the continuous functional calculus for real von Neumann algebras \cite[Proposition~5.1.6(2)]{Li}. Therefore, we have that $N_p \subseteq N'_p$, and so
\[
N \cap N'  \cong \left\{ \left(
                                                  \begin{array}{cc}
                                                    x & 0 \\
                                                    0 & x \\
                                                  \end{array}
                                                \right)
\colon x \in N_p \cap N'_p \right\} = \left\{ \left(
                                                               \begin{array}{cc}
                                                                 x & 0 \\
                                                                 0 & x \\
                                                               \end{array}
                                                             \right)
\colon x \in N_p \right\}.
\]
Since $N \cap N'$ contains no trivial projections, we obtain that $N_p$ contains no trivial projections. However, unlike the case of a von Neumann algebra, a real von Neumann algebra without any nontrivial projections need not be trivial (i.e., $\mathbb{C}$, $\mathbb{H}$). But by \cite[Proposition~4.3.4(3)]{Li}, the linear span of the projections is dense in $(N_{p})_{sa}$, and so $(N_{p})_{sa}$ must be trivial. Since $C_\R(\sigma(pqp))\subseteq (N_{p})_{sa}$, this can only happen if $\sigma(pqp)$ consists of a single element, which implies that $N_p\cong\R$, as desired. This completes the proof of Lemma~\ref{equivalence}.

\subsection{Characterization of Hilbert isometries on JBW-algebras}
Using Theorem \ref{t:connecting_simplices} we can now deduce the desired result.

\begin{corollary}\label{uniform lambda}
If $M$ and $N$ are JBW-algebras and $f\colon \ol{M}_+^\circ \to \ol{N}_+^\circ$ is a bijective Hilbert's metric isometry with $f(\ol{e})=\ol{e}$, then either for  $f$ or for $\iota\circ f$  the induced map $\theta\colon  \P(M)\to \P(N)$ is an orthoisomorphism.
\end{corollary}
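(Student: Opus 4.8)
The plan is to first extract a global dichotomy from the simplex results and then verify the two implications that define an orthoisomorphism. By Corollary~\ref{cor:proj_linear}, for any single orthogonal simplex at least one of $f$ and $\iota\circ f$ is projectively linear on it. If $f$ is projectively linear on some orthogonal simplex, then Theorem~\ref{t:connecting_simplices} forces $f$ to be projectively linear on \emph{every} orthogonal simplex; otherwise Corollary~\ref{cor:proj_linear} gives that $\iota\circ f$ is projectively linear on that simplex, and applying Theorem~\ref{t:connecting_simplices} to the isometry $\iota\circ f$ (which also fixes $\ol{e}$) shows $\iota\circ f$ is projectively linear on every orthogonal simplex. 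Replacing $f$ by $\iota\circ f$ if necessary, I may therefore assume $f$ is projectively linear on all orthogonal simplices; by Corollary~\ref{cor:proj_linear} this means that for every orthogonal simplex $\Delta(p_1,p_2,p_3)$ the images $\theta(p_1),\theta(p_2),\theta(p_3)$ are pairwise orthogonal with $\theta(p_1)+\theta(p_2)+\theta(p_3)=e$.

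Next I would show that $\theta$ sends orthogonal projections to orthogonal projections. Let $p,q\in\P(M)$ be nontrivial and orthogonal. If $p+q=e$, then $q=p^\perp$ and $\theta(q)=\theta(p)^\perp$ by Lemma~\ref{lem:T_orth_complements}, so $\theta(p)\perp\theta(q)$. If $p+q<e$, then $(p+q)^\perp$ is a nontrivial projection orthogonal to both $p$ and $q$, so $\Delta(p,q,(p+q)^\perp)$ is an orthogonal simplex; since $f$ is projectively linear on it, the preceding paragraph yields $\theta(p)\perp\theta(q)$. The cases where $p$ or $q$ is trivial are immediate from $\theta(0)=0$ and $\theta(e)=e$.

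For the converse implication I would run the symmetric argument for $f^{-1}$ and $\theta^{-1}$. Fix any orthogonal simplex $\Delta(p_1,p_2,p_3)$ in $\ol{M}_+^\circ$; projective linearity of $f$ carries it onto $\Delta(\theta(p_1),\theta(p_2),\theta(p_3))$, which is an orthogonal simplex in $\ol{N}_+^\circ$ by the first paragraph, and the restriction of $f^{-1}$ to it is manifestly projectively linear. Hence $f^{-1}$, which is a bijective Hilbert's metric isometry fixing $\ol{e}$ whose induced projection map is $\theta^{-1}$, is projectively linear on some orthogonal simplex, and so by Theorem~\ref{t:connecting_simplices} on all of them. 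Repeating the argument of the previous paragraph with $f^{-1}$ and $\theta^{-1}$ in place of $f$ and $\theta$ (invoking Lemma~\ref{lem:T_orth_complements} for $\theta^{-1}$ in the complementary case) shows that $\theta^{-1}$ also preserves orthogonality; in particular $\theta(p)\perp\theta(q)$ forces $p=\theta^{-1}(\theta(p))\perp\theta^{-1}(\theta(q))=q$. Combined with the previous step, this proves that $\theta$ is an orthoisomorphism.

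The step I expect to require the most care is the consistency of the reduction feeding into the converse direction: one must check that projective linearity of $f$ on all orthogonal simplices transfers to $f^{-1}$, which hinges on the image of an orthogonal simplex under a projectively linear $f$ again being an orthogonal simplex, so that Theorem~\ref{t:connecting_simplices} becomes applicable to $f^{-1}$. A minor degenerate situation, in which $M$ carries no orthogonal simplices at all and hence every orthogonal pair of nontrivial projections is complementary, falls outside the simplex machinery but is handled directly by Lemma~\ref{lem:T_orth_complements} for $f$ itself.
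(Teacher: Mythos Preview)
Your proof is correct and follows essentially the same approach as the paper: fix an orthogonal simplex, use Corollary~\ref{cor:proj_linear} to decide between $f$ and $\iota\circ f$, propagate via Theorem~\ref{t:connecting_simplices}, handle complementary pairs by Lemma~\ref{lem:T_orth_complements}, and then symmetrize to $f^{-1}$ for the reverse implication. The paper's proof is terser---it dispatches the converse in one line (``Applying the same argument to $f^{-1}$'')---whereas you spell out why $f^{-1}$ is itself projectively linear on some orthogonal simplex (namely the image of one under $f$) before invoking Theorem~\ref{t:connecting_simplices}; you also explicitly note the degenerate case with no orthogonal simplices, which the paper leaves implicit.
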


\begin{proof}Suppose that $p_1,p_2\in \P(M)$ are orthogonal projections. By Lemma~\ref{lem:T_orth_complements} we may assume that $p_1+p_2<e$. Let $p_3:=(p_1+p_2)^\perp$. After possibly composing $f$ with the inversion $\iota$ we may assume that $f$ is projectively linear on $\Delta(p_1,p_2,p_3)$ and so $\theta$ preserves the orthogonality of $p_1,p_2$ and $p_3$ by Corollary~\ref{cor:proj_linear}. Hence $\theta(p_1)$ and $\theta(p_2)$ are orthogonal. By Theorem~\ref{t:connecting_simplices}, $f$ is projectively linear on all other orthogonal simplices as well, so $\theta$ preserves the orthogonality of all noncomplementary orthogonal projections in $\P(M)$.
Applying the same argument to $f^{-1}$ shows that $\theta^{-1}$ also preserves orthogonality.
\end{proof}

By the proof of \cite[Lemma~1]{Dye}, $\theta$ is an order isomorphism and preserves products of operator commuting projections. Our next goal is to show that $\theta$ extends to a Jordan isomorphism. If $M$ and $N$ are Euclidean Jordan algebras, this can be done with a similar argument as used in \cite{Bo}, see Remark~\ref{r:extend_EJA}. We will now explain how to proceed in the general case of JBW-algebras. The reader only interested in the von Neumann algebra case should follow this argument, but instead of the representations \eqref{e:I_2}, each type $I_2$ von Neumann algebra is isomorphic to $L^\infty(\Omega, \mathbb{M}_2(\mathbb{C}))$.

We can write $M=M_2\oplus \tilde{M}$ and $N=N_2\oplus \tilde{N}$ where $M_2$ and $N_2$ are type $I_2$ direct summands, and $\tilde{M}$ and $\tilde{N}$ are JBW-algebras without type $I_2$ direct summands. See \cite[Theorem~5.1.5, Theorem~5.3.5]{HO}. Suppose $\tilde{p}\in\P(M)$ and $\tilde{q}\in\P(N)$ are the central projections such that $\tilde{p}M=\tilde{M}$ and $\tilde{q}N=\tilde{N}$. Since $\theta$ is an order isomorphism, the restriction $\theta|_{\P(\tilde{M})}\colon\P(\tilde{M})\to \P(\theta(\tilde{p})N)$ is an orthoisomorphism. As $\tilde{M}$ has no type $I_2$ direct summand, we can use the following result.

\begin{theorem}[Bunce, Wright]\label{thm:no_type_2_extends} Let $M$ and $N$ be JBW-algebras such that $M$ has no type $I_2$ direct summand. If $\theta\colon\P(M)\to\P(N)$ is an orthoisomorphism, then $\theta$ extends to a Jordan isomorphism $J\colon M\to N$.
\end{theorem}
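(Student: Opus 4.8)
The plan is to reconstruct the full Jordan structure from the purely lattice-theoretic datum of $\theta$, following the strategy of Dye's theorem but feeding it the Jordan-algebraic analogue of Gleason's theorem. First I would record the soft structural consequences of $\theta$ being an orthoisomorphism. Since $\theta$ and $\theta^{-1}$ both preserve orthogonality, $\theta$ is an order isomorphism of projection lattices (this is the content invoked via \cite[Lemma~1]{Dye}), hence preserves arbitrary meets and joins and is normal (order continuous on the complete lattice $\P(M)$). Because $\theta$ preserves orthogonality, joins, and the unit, it preserves orthocomplementation, $\theta(p^\perp)=\theta(p)^\perp$, exactly as in Lemma~\ref{lem:T_orth_complements}. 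In particular, for orthogonal $p,q$ one has $\theta(p+q)=\theta(p\vee q)=\theta(p)\vee\theta(q)=\theta(p)+\theta(q)$, so $\theta$ is completely additive along orthogonal families.

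The core of the proof is to upgrade this orthogonal additivity to genuine linearity, and this is precisely where the hypothesis that $M$ has no type $I_2$ summand is indispensable. I would argue state by state. Fix a normal state $\omega$ of $N$ and define $\mu_\omega\colon\P(M)\to[0,1]$ by $\mu_\omega(p):=\omega(\theta(p))$. By the previous paragraph, $\mu_\omega$ is a bounded, completely additive measure on $\P(M)$. The decisive step is the Jordan version of the Mackey--Gleason theorem: on a JBW-algebra with no type $I_2$ direct summand, every bounded finitely additive measure on $\P(M)$ extends uniquely to a bounded normal linear functional $\tilde\mu_\omega\in M_*$ agreeing with $\mu_\omega$ on projections.

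Next I would assemble these functionals. The assignment $\omega\mapsto\tilde\mu_\omega$ is linear (by the uniqueness in the extension theorem) and bounded (from $\|\mu_\omega\|\le 1$) from $N_*$ into $M_*$, so its adjoint restricts to a bounded normal linear map $J\colon M\to N$ determined by $\omega(Ja)=\tilde\mu_\omega(a)$, whence $J(p)=\theta(p)$ for every projection $p$. As the linear span of projections is norm dense, $J$ is the unique bounded linear extension of $\theta$. Running the construction for $\theta^{-1}$ gives a bounded normal linear $J'\colon N\to M$ extending $\theta^{-1}$; then $J'J$ and $JJ'$ are bounded linear maps fixing all projections, hence are the identities, so $J$ is a bijection. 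Because $J$ sends projections to projections and is normal, it maps each positive element $\sum\lambda_i p_i$ (with $\lambda_i\ge 0$, $p_i$ orthogonal, via the spectral theorem) to a positive element, so $J$ is positive; it is unital since $\theta(e)=e$. Thus $J$ is a unital order isomorphism, and by Corollary~\ref{orderisoms} it is a Jordan isomorphism extending $\theta$.

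The hard part is the middle step, the Mackey--Gleason extension theorem for JBW-algebras without a type $I_2$ summand. Orthogonal additivity of $\mu_\omega$ is immediate, but passing from it to additivity across \emph{non-orthogonal} projections requires controlling the measure on the full frame and spin structure of the algebra, and the argument genuinely fails for type $I_2$ factors (spin factors), mirroring the failure of Gleason's theorem in dimension two. Everything else above is formal functional analysis (normality, density of the span of projections, adjoints of pre-adjoints), so the entire weight of the theorem rests on this Gleason-type measure-extension result.
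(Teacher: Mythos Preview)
Your proposal is correct and follows essentially the same approach as the paper: the paper's proof simply cites Bunce--Wright \cite[Corollary~2]{BW}, noting that its crucial ingredient---that every quantum measure on the projection lattice extends to a state---holds for JBW-algebras by \cite[Theorem~2.1]{BW2}, and you have unpacked precisely this argument (Dye's order-isomorphism observation, the state-by-state Mackey--Gleason extension, and the dualization to a linear map). Your identification of the measure-extension step as the place where the no-type-$I_2$ hypothesis enters is exactly the point the paper highlights.
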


\begin{proof} The theorem is exactly \cite[Corollary~2]{BW} but for JBW-algebras instead of JW-algebras. This corollary follows from \cite[Proposition~p.\,91]{BW}, and the crucial ingredient here is that any quantum measure on the projection lattice of a JW-algebra extends to a state. But this statement is also true for JBW-algebras by \cite[Theorem~2.1]{BW2}.
\end{proof}

So $\theta|_{\P(\tilde{M})}$ extends to a Jordan isomorphism $\tilde{J}\colon \tilde{p}M\to \theta(\tilde{p})N$. Moreover, $\theta(\tilde{p})=\tilde{q}$. Indeed, the image of $\tilde{p}M$ under $\tilde{J}$ in $N$ contains no type $I_2$ direct summand, hence $\tilde{J}(\tilde{p}M)\subseteq \tilde{q}N$. This implies that $\theta(\tilde{p})\le \tilde{q}$. Applying the same argument to  $\theta^{-1}$ shows that $\theta^{-1}(\tilde{q})\le \tilde{p}$, so $\tilde{p}=\tilde{q}$.

Our next goal is to show that the orthoisomorphism $\theta|_{\P(M_2)}\colon \P(M_2)\to\P(N_2)$ extends to a Jordan isomorphism as well. By \cite[Theorem~2]{S} we can represent
\begin{equation}\label{e:I_2}
M_2\cong \bigoplus_k L^\infty(\Omega_k,V_k)\quad\mbox{and}\quad
N_2\cong \bigoplus_l L^\infty(\Xi_l, V_l)
\end{equation}
where $k,l$ are cardinals, $\Omega_k,\Xi_l$ are measure spaces, $V_i=H_i\oplus\R$ are spin factors with $\dim H_i=i$. We denote the unit in each $V_k$ by $u$. Let $\Omega := \bigsqcup_k \Omega_k$ be the disjoint union of the $\Omega_k$'s. By identifying $f\in L^\infty(\Omega)$ with $\omega\mapsto f(\omega) u$, we can view $L^\infty(\Omega)$ as lying inside $M_2$. It follows that $Z(M_2)=L^\infty(\Omega)$ and if $p := \mathbf{1}_A \in Z(M_2)$, then $Z(pM_2) = L^\infty(A)$. Since $\theta$ preserves operator commutativity, it preserves the center, and it is straightforward to see that $\theta|_{\P(Z(M_2))} \colon \P(Z(M_2)) \to \P(Z(N_2))$ extends to a Jordan isomorphism $T \colon Z(M_2) \to Z(N_2)$.

Let $a\in M_2$. For almost all $\omega\in\Omega$ the element $a(\omega)$ has rank 1 or rank 2, so modulo null sets we can write $\Omega $ as $\Omega=\Omega^1 \sqcup \Omega^2$ where
\[
\Omega^i:=\left\{\omega\in\Omega\colon \#\sigma(a(\omega))=i\right\}.
\]
If we write $q_i:=\mathbf{1}_{\Sigma_i}$ for $i=1,2$, then there exist unique $\alpha\in Z(q_1 M_2)$, $\beta,\gamma\in Z(q_2 M_2)$, and $0\neq p\in\P(q_2M_2)$ with $p(\omega)$ of rank 1 a.e.\ such that
\[
a(\omega):=
\begin{cases}
\alpha(\omega)u & \mbox{ if $\omega \in \Omega^1$}\\
\beta(\omega)p(\omega)+\gamma(\omega)p(\omega)^\perp& \mbox{ if $\omega \in \Omega^2$}\end{cases}
\]
which yields $a=\alpha+\beta p+\gamma p^\perp$ as a unique representation. Define $J_2\colon M_2\to N_2$ by
\[
J_2(a):=T\alpha +T\beta \theta(p)+T\gamma \theta(p)^\perp.
\]

\begin{lemma}\label{lem:nontrivial projections}
$p\in\P(M_2)$ is a.e.\ rank 1 if and only if $qp\neq 0$ and $qp^\perp\neq 0$ for all nonzero central projections $q\in\P(M_2)$.
\end{lemma}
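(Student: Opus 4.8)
The plan is to argue fiberwise in the representation \eqref{e:I_2}. The key point is that in each spin factor $V_k=H_k\oplus\R$ the only \emph{trivial} projections are $0$ and the unit $u$, whereas every remaining projection is minimal; in the terminology used above, $p(\omega)$ has $\#\sigma(p(\omega))=2$ precisely when $p(\omega)\notin\{0,u\}$. Hence ``$p$ is a.e.\ rank $1$'' is exactly the statement that $p(\omega)\notin\{0,u\}$ for almost every $\omega\in\Omega$. I would also record the two facts I need about the center: $Z(M_2)=L^\infty(\Omega)$, so every central projection is of the form $q=\mathbf{1}_A$ for a measurable $A\subseteq\Omega$, and since $q$ operator commutes with $p$ the product acts fiberwise by $(qp)(\omega)=\mathbf{1}_A(\omega)p(\omega)$ (using $u\circ x=x$ in each fiber). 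Thus $q$ is nonzero iff $A$ has positive measure, $qp=0$ iff $p(\omega)=0$ for a.e.\ $\omega\in A$, and $qp^\perp=0$ iff $p(\omega)=u$ for a.e.\ $\omega\in A$.

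For the forward implication, assume $p$ is a.e.\ rank $1$, so that $p(\omega)\neq 0$ and $p^\perp(\omega)=u-p(\omega)\neq 0$ for a.e.\ $\omega$. Given any nonzero central projection $q=\mathbf{1}_A$, the set $A$ has positive measure, and on $A$ the function $qp$ coincides with $p$, which is nonzero a.e.; hence $qp\neq 0$, and the identical argument applied to $p^\perp$ gives $qp^\perp\neq 0$.

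For the converse I would argue by contraposition. If $p$ is not a.e.\ rank $1$, then the measurable set $\{\omega:p(\omega)\in\{0,u\}\}$ has positive measure, so at least one of $E_0:=\{\omega:p(\omega)=0\}$ or $E_1:=\{\omega:p(\omega)=u\}$ has positive measure. In the first case $q:=\mathbf{1}_{E_0}$ is a nonzero central projection with $qp=0$; in the second case $q:=\mathbf{1}_{E_1}$ is a nonzero central projection with $qp^\perp=0$. Either way the stated condition fails, which proves the contrapositive.

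The only genuine technical point is measurability: one must know that $E_0$ and $E_1$ are measurable and that their indicators genuinely lie in $Z(M_2)=L^\infty(\Omega)$. This is routine for $L^\infty(\Omega,V_k)$-valued projections, but if one prefers to sidestep fiberwise measurability, the whole statement can be recast through central covers: the condition ``$qp\neq 0$ for every nonzero central $q$'' says exactly that the central cover of $p$ equals $e$, and likewise for $p^\perp$, while in the representation \eqref{e:I_2} the central cover of $p$ is $\mathbf{1}_{\{p\neq 0\}}$ and that of $p^\perp$ is $\mathbf{1}_{\{p\neq u\}}$; the two covers both equal $e$ iff $E_0$ and $E_1$ are null, i.e.\ iff $p$ is a.e.\ rank $1$. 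I expect this bookkeeping about central covers, rather than any computation, to be the only place where a little care is required.
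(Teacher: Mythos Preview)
Your proof is correct and follows essentially the same fiberwise argument as the paper: both identify central projections with indicators $\mathbf{1}_A$ in $L^\infty(\Omega)$ and check that $qp=0$ (resp.\ $qp^\perp=0$) precisely when $p(\omega)=0$ (resp.\ $p(\omega)=u$) a.e.\ on $A$. Your write-up is in fact more careful than the paper's terse version---you make explicit what ``rank~1'' means in a spin factor, flag the measurability of $E_0,E_1$, and offer the central-cover reformulation as a clean alternative---but the underlying idea is the same.
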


\begin{proof}
Let $A\subseteq \Omega$ be measurable and suppose that $p(\omega)=0$ a.e.\ on $A$. Then $\mathbf{1}_A\in \P(M_2)$ is a central projection and $\mathbf{1}_Ap=0$. Similarly, if $B\subseteq\Omega$ is a measurable set such that $p(\omega)=u$ a.e.\ on $B$, then $\mathbf{1}_Bp^\perp=0$.

Conversely, if $p\in\P(M_2)$ is a.e.\ rank 1, then neither $\mathbf{1}_Ap=0$ nor $\mathbf{1}_Ap^\perp=0$ for all nonzero measurable $A\subseteq\Omega$, which are precisely the nonzero central projections of $\P(M_2)$.
\end{proof}

Since $\theta$ preserves central projections and orthogonality, it maps a.e.\ rank 1 projections to a.e.\ rank 1 projections. Now $a \in \P(M_2)$ if and only if $\alpha,\beta,\gamma\in\P(Z(M_2))$, and in this case, since $T$ extends $\theta|_{\P(Z(M_2))}$,
\[
J_2(a)=T\alpha+T\beta \theta(p)+T\gamma \theta(p)^\perp=\theta(\alpha)+\theta(\beta)\theta(p)+\theta(\gamma)\theta(p)^\perp=\theta(\alpha)+\theta(\beta p)+\theta(\gamma p^\perp)=\theta(a)
\]
as $\theta$ preserves products of operator commuting projections. Therefore $J_2(a)=\theta(a)$ and so $J_2$ extends $\theta$.

For $\mu\in\R$ and the unit $e_2\in M_2$ we have that $J_{2}(a+\mu e_2)=J_2(a)+\mu e_2$, so $J_2$ induces the quotient map $\overline{J}_2\colon[M_2]\to[N_2]$ defined by $\overline{J}_2([a]):=[J_2 a]$. We claim that $\overline{J}_2$ coincides with $S$ on $[M_2]$. To that end, let $a \in M_2$ be such that $a=\alpha+\beta p+\gamma p^\perp$ where $\alpha=\sum_i\alpha_i\mathbf{1}_{A_i},\beta=\sum_j\beta_j\mathbf{1}_{B_j}$, and
$\gamma=\sum_k\gamma_k\mathbf{1}_{C_k}$ are step functions. Since $\theta$ preserves products of operator commuting projections and the fact that $T$ maps step functions to step functions,

\begin{align*}
\overline{J}_2([a])&=[J_2(a)]=[T\alpha+T\beta \theta(p)+T\gamma \theta(p)^\perp]\\&=\sum_i\alpha_i[\theta(\mathbf{1}_{A_i})]+\sum_j\beta_j[\theta(\mathbf{1}_{B_j} p)]+\sum_k\gamma_k[\theta(\mathbf{1}_{C_k} p^\perp)]\\&=\sum_i\alpha_iS\mathbf{1}_{A_i}+\sum_j\beta_jS\mathbf{1}_{B_j} p+\sum_k\gamma_kS\mathbf{1}_{C_k} p^\perp\\&=S[a].
\end{align*}
Now, for general $a=\alpha+\beta p+\gamma p^\perp\in M_2$ let $\alpha'$, $\beta'$, and $\gamma'$ be approximating step functions for $\alpha$, $\beta$, and $\gamma$. If we put $b:=\alpha'+\beta' p+\gamma' p^\perp$, then

\begin{align*}
\|a-b\|&\le\|\alpha-\alpha'\|+\|\beta-\beta'\|+
\|\gamma-\gamma'\|
\end{align*}
and
\begin{align*}
\|J_2(a)-J_2(b)\|&\le\|\alpha-\alpha'\|+\|\beta-\beta'\|+
\|\gamma-\gamma'\|
\end{align*}
as $T$ is an isometry, so both norms can be made arbitrarily small. This implies that

\begin{align*}
\|\overline{J}_2([a])-S[a]\|_v&\le\|\overline{J}_2([a])-\overline{J}_2([b])\|_v
+\|\overline{J}_2([b])-S[b]\|_v+\|S[b]-S[a]\|_v\\&
=\|\overline{J}_2([a])-\overline{J}_2([b])\|_v+\|S([b]-[a])\|_v\\&
\le\|[J_2(a)-J_2(b)]\|_v+\|[b-a]\|_v\\&\le2\|J_2(a)-J_2(b)\|+2\|b-a\|
\end{align*}
can be made arbitrarily small, and we conclude that $\overline{J}_2=S$ on $[M_2]$.

Having this, we will now proceed to show that $J_2$ is linear. Let $\Xi := \bigsqcup_l \Xi_l$ be the disjoint union of the $\Xi_l$'s, and let $\phi$ be a state on $Z(N_2)=L^\infty(\Xi)$. Then $T^*\phi$ is a state on $Z(M_2)= L^\infty(\Omega)$, and define the functionals $\mathrm{tr}\otimes T^*\phi\in M_2^*$ and $\mathrm{tr}\otimes\phi\in N_2^*$ by

\[
(\mathrm{tr}\otimes T^*\phi)(a):=T^*\phi(\omega\mapsto\mathrm{tr}(a(\omega)))\quad\mbox{and}\quad
(\mathrm{tr}\otimes \phi)(b):=\phi(\xi\mapsto\mathrm{tr}(b(\xi))).
\]
Put $M_0:=\ker\mathrm{tr}\otimes T^*\phi$ and $N_0:=\ker\mathrm{tr}\otimes \phi$. Since $e_2\notin M_0$ and $e_2\notin N_0$, the corresponding quotient maps $\pi_1\colon M_0\to [M_2]$ and $\pi_2\colon N_0\to[N_2]$ are linear isomorphisms. Furthermore, we have that $J_2(M_0)\subseteq N_0$. Indeed, if $x\in M_2$, then since $\theta(p)$ is a.e.\ rank 1,
\[
(\mathrm{tr}\otimes \phi)(J_2(a))=(\mathrm{tr}\otimes \phi)(T\alpha+T\beta \theta(p)+T\gamma \theta(p)^\perp)=\phi(2T\alpha+T\beta+T\gamma).
\]
Therefore, for $a\in M_0$ it follows that

\begin{align*}
(\mathrm{tr}\otimes \phi)(J_2(a))&=\phi(2T\alpha+T\beta+T\gamma)=
\phi(T(2\alpha+\beta+\gamma))\\&=T^*\phi(2\alpha+\beta+\gamma)=(\mathrm{tr}\otimes T^*\phi)(a)\\&=0.
\end{align*}
Now, if $a\in M_0$, then $J_2(a)\in N_0$ which shows the last equality of the equation
\begin{equation}\label{e:linear_J}
\pi_2^{-1}\circ\overline{J}_2\circ \pi_1(a)=\pi_2^{-1}\overline{J}_2[a]=\pi_2^{-1}[J_2(a)]=J_2(a),
\end{equation}
hence $J_2|_{M_0}$ is linear. As $M_2=M_0\oplus\R e_2$ and $N_2=N_0\oplus\R e_2$, and we have $J_2(a+\mu e_2)=J_2(a)+\mu e_2$ for all $\mu\in\R$, it follows that $J_2=J_2|_{M_0}\oplus\mathrm{Id}_{\R e_2}$ is linear.

Moreover, we have

\begin{align*}
\|a\|&=\esssup_{\omega\in\Omega}\|a(\omega)\|=\max\{\|\alpha\|_\infty,
\|\beta\|_\infty,\|\gamma\|_\infty\}\\&
=\max\{\|T\alpha\|_\infty,
\|T\beta\|_\infty,\|T\gamma\|_\infty\}\\&=
\esssup_{\xi\in\Xi}\|J_2(a)(\xi)\|\\&=\|J_2(a)\|,
\end{align*}
so $J_2$ is an isometry and therefore a Jordan isomorphism by Corollary \ref{orderisoms} that extends $\theta|_{\P(M_2)}$. The above discussion yields

\begin{corollary}\label{cor:T_extends_J}
If $f\colon\ol{M}_+^\circ\to\ol{N}_+^\circ$ is a bijective Hilbert's metric isometry with $f(\ol{e})=\ol{e}$ such that its induced map $\theta\colon\P(M)\to\P(N)$ is an orthoisomorphism, then $\theta$ extends to a Jordan isomorphism $J\colon M\to N$.
\end{corollary}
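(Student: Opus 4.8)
The plan is to extend $\theta$ by treating the type $I_2$ summand separately from the rest, since the available extension theorems behave very differently on the two pieces. As noted at the start of the discussion, the hypothesis that $\theta$ is an orthoisomorphism already gives, via the proof of Dye's Lemma \cite{Dye}, that $\theta$ is an order isomorphism preserving products of operator commuting projections; I take this as the starting point. I then write $M=M_2\oplus\tilde M$ and $N=N_2\oplus\tilde N$, where $M_2,N_2$ are the type $I_2$ direct summands and $\tilde M,\tilde N$ have none, with associated central projections $\tilde p,\tilde q$. The goal is to produce a Jordan isomorphism on each summand and glue.

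On the part without a type $I_2$ summand the work is essentially citation. Because $\theta$ preserves the center and orthogonality, its restriction $\theta|_{\P(\tilde M)}$ is an orthoisomorphism onto $\P(\theta(\tilde p)N)$, so Theorem~\ref{thm:no_type_2_extends} (Bunce--Wright) extends it to a Jordan isomorphism $\tilde J\colon\tilde pM\to\theta(\tilde p)N$. Comparing the images of $\tilde J$ and of the corresponding map for $\theta^{-1}$ against the type $I_2$ decompositions forces $\theta(\tilde p)=\tilde q$, so in fact $\tilde J\colon\tilde M\to\tilde N$, and $\tilde J$ agrees with $\theta$ on $\P(\tilde M)$.

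The genuinely difficult part is the type $I_2$ summand, where Bunce--Wright is unavailable and the extension must be built by hand; this is the step I expect to be the main obstacle. Here I would use Stacey's representation \eqref{e:I_2} of $M_2$ and $N_2$ as direct integrals of spin factors, extend $\theta$ on the center to a Jordan isomorphism $T\colon Z(M_2)\to Z(N_2)$, and define $J_2$ on the unique decomposition $a=\alpha+\beta p+\gamma p^\perp$ (with $p$ almost everywhere rank one) by $J_2(a)=T\alpha+T\beta\,\theta(p)+T\gamma\,\theta(p)^\perp$. That $J_2$ extends $\theta|_{\P(M_2)}$ is immediate from the preservation of operator-commuting products, but its \emph{linearity} is not visible from the formula. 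The crux is to show that the induced map $\overline J_2$ on the quotient $[M_2]$ coincides with the linear isometry $S$: this is checked first on step functions, then extended by a density and triangle-inequality estimate in $\norm{\cdot}_v$. Introducing a state $\phi$ on $Z(N_2)$ and the hyperplane $M_0=\ker(\mathrm{tr}\otimes T^*\phi)$, which $\pi_1$ identifies linearly with $[M_2]$ and which $J_2$ maps into $N_0$, then transports linearity of $S$ to $J_2|_{M_0}$ via \eqref{e:linear_J}; since $M_2=M_0\oplus\R e_2$ and $J_2$ fixes the $e_2$-direction, $J_2$ is linear on all of $M_2$. A direct essential-supremum computation shows $J_2$ is a $\norm{\cdot}$-isometry, hence a Jordan isomorphism by Corollary~\ref{orderisoms}.

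Finally I would assemble $J:=J_2\oplus\tilde J\colon M\to N$. Because $\theta(\tilde p)=\tilde q$ the two summands are correctly matched, and since $J_2$ and $\tilde J$ each agree with $\theta$ on their respective projection lattices, $J$ is a Jordan isomorphism extending $\theta$ on all of $\P(M)$, as required.
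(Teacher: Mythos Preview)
Your proposal is correct and follows essentially the same approach as the paper: the decomposition into the type $I_2$ summand and its complement, the application of Bunce--Wright on the complement, the explicit construction of $J_2$ via Stacey's representation and the decomposition $a=\alpha+\beta p+\gamma p^\perp$, the identification $\overline J_2=S$ first on step functions and then by density, and the hyperplane trick with $M_0=\ker(\mathrm{tr}\otimes T^*\phi)$ to transport linearity from $S$ to $J_2$ are exactly the steps the paper carries out in the discussion preceding the corollary.
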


We will now show that the quotient map induced by the Jordan isomorphism $J$ above coincides with $S$.

\begin{lemma}\label{lem:J_is_S}
Let $J \colon M \to N$ be a Jordan isomorphism that extends $\theta$. Then $J$ induces the quotient map $\ol{J} \colon [M] \to [N]$ defined by $\ol{J}([a]) := [J(a)]$, which satisfies $\ol{J} = S$.
\end{lemma}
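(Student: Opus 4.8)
The plan is to exhibit $\ol{J}$ and $S$ as two continuous linear maps from $([M],\norm{\cdot}_v)$ to $([N],\norm{\cdot}_v)$ that agree on the projection classes, and then to promote this agreement to equality by a density argument.

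First I would check that $\ol{J}$ is well defined and continuous. Since $J$ is a Jordan isomorphism it is unital, so $J(\Span(e))=\Span(e)$ and the assignment $\ol{J}([a]):=[J(a)]$ defines a linear map on $[M]$. Moreover $J$ preserves the spectrum, hence $\norm{J(a)}_v=\diam\sigma(J(a))=\diam\sigma(a)=\norm{a}_v$, so $\ol{J}$ is in fact a $\norm{\cdot}_v$-isometry and in particular continuous. By Theorem~\ref{t:inducing_isometry}(b) the map $S$ is itself a bijective linear $\norm{\cdot}_v$-isometry, hence also continuous. Thus both maps are continuous and linear.

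Second, I would verify that $\ol{J}$ and $S$ coincide on every class $[p]$ with $p\in\P(M)$. For the trivial projections $p\in\{0,e\}$ both sides equal $[0]$. For a nontrivial projection $p$, Lemma~\ref{l:proj_to_proj} gives $S[p]=[\theta(p)]$, while $\ol{J}([p])=[J(p)]=[\theta(p)]$ because $J$ extends $\theta$. Hence $\ol{J}$ and $S$ agree on $\{[p]:p\in\P(M)\}$, and by linearity on its linear span.

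The final and only substantive step is the density argument. Since $M$ is a JBW-algebra, the spectral theorem ensures that the linear span of $\P(M)$ is norm dense in $M$. Because the quotient map $a\mapsto[a]$ satisfies $\norm{[a]}_v\le 2\norm{a}$ it is continuous, so the linear span of $\{[p]:p\in\P(M)\}$ is dense in $([M],\norm{\cdot}_v)$. Two continuous linear maps that agree on a dense subspace are equal, and therefore $\ol{J}=S$. I do not anticipate a genuine obstacle here; the one point that must be recorded carefully is the continuity of $\ol{J}$, $S$, and the quotient map in the variation norm, which is precisely what allows the density of the projection classes to close the argument.
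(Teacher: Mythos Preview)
Your proposal is correct and follows essentially the same route as the paper: both show $\ol{J}$ and $S$ are continuous linear maps on $[M]$ that agree on projection classes, then invoke the spectral theorem and the inequality $\norm{[a]}_v\le 2\norm{a}$ to pass to all of $[M]$ by density. The only cosmetic differences are that the paper writes out the $\varepsilon$-approximation explicitly and bounds $\norm{[Ja-Jb]}_v$ via the JB-norm isometry of $J$, whereas you obtain continuity of $\ol{J}$ directly from the fact that a Jordan isomorphism preserves the spectrum and hence $\norm{\cdot}_v$.
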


\begin{proof}
Let $b = \sum_{i=1}^n \lambda_i p_i$, where $\lambda_1, \ldots, \lambda_n \in \R$ and $p_1, \ldots, p_n \in \P(M)$ are orthogonal projections. Then
\begin{equation}\label{e:j=s}
\ol{J}[b] = [Jb] = \left[\sum_{i=1}^n\lambda_i \theta(p_i)\right]=\sum_{i=1}^n\lambda_i [\theta(p_i)]=\sum_{i=1}^n\lambda_i S[p_i]= S[b].
\end{equation}
Now let $a \in M$ and $\eps > 0$. By the spectral theorem, let $b$ be as above such that $\norm{a-b} < \eps$. Then $\norm{Ja - Jb} < \eps$, and since $S$ is a $\norm{\cdot}_v$-isometry and $\norm{\cdot}_v \leq 2 \norm{\cdot}$,
\begin{align*}
\|\ol{J}[a] - S[a]\|_v &\le\| \ol{J}[a] - \ol{J}[b] \|_v + \|\ol{J}[b] - \ol{S}[b] \|_v + \| S[b] - S[a] \|_v \\
&= \norm{ [Ja - Jb]}_v + \norm{[b-a]}_v \\
&\leq 2 \norm{Ja - Jb} + 2 \norm{b-a} \\
&< 4 \eps.
\end{align*}
Hence $\ol{J}[a] = S[a]$ for all $[a]\in [M]$.
\end{proof}

\begin{remark}\label{r:extend_EJA}
If $M$ and $N$ are Euclidean Jordan algebras and $\theta \colon \P(M) \to \P(N)$ is an orthoisomorphism, then an easier argument shows that $\theta$ extends to a Jordan isomorphism. Indeed, every $a \in M$ has a unique spectral decomposition $a = \lambda_1 p_1 + \ldots + \lambda_n p_n$, and so we can define $J(a) := \lambda_1 \theta(p_1) + \ldots \lambda_n \theta(p_n)$. Then $J(a + \mu e) = J(a) + \mu e$, so $J$ induces a map $\ol{J} \colon [M] \to [N]$ by $\ol{J}([a]) := [J(a)]$. By \eqref{e:j=s}, $\ol{J} = S$ is linear. Let $M_0$ and $N_0$ be the kernels of the traces in $M$ and $N$ respectively, then $[M] \cong M_0$ and $[N] \cong N_0$. It is clear from the definition of $J$ that it maps $M_0$ into $N_0$, and so \eqref{e:linear_J} implies that $\ol{J} \cong J|_{M_0}$ is linear, thus $J = J|_{M_0} \oplus \mathrm{Id}_{\R e}$ is linear. Since the spectrum and hence the norm is preserved, $J$ is a Jordan isomorphism by Corollary~\ref{orderisoms}.
\end{remark}

We can now prove the following characterization of the Hilbert's metric isometries on cones in JBW-algebras.

\begin{theorem}\label{t:hilbertisoms} If $M$ and $N$ are JBW-algebras, then $f \colon \ol{M}_+^\circ \to \ol{N}_+^\circ$ is a bijective Hilbert's metric isometry if and only if
\begin{equation}\label{hilbisom}
 f(\overline{a}) = \ol{U_b J(a^\epsilon)} \mbox{\quad for all }\ol{a}\in \ol{M}_+^\circ,
 \end{equation}
where $\epsilon\in\{-1,1\}$, $b\in N_+^\circ$, and $J\colon M\to N$ is a Jordan isomorphism. In this case $b\in f(\ol{e})^{\frac{1}{2}}$.
\end{theorem}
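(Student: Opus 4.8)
The plan is to assemble the results already established; the only genuinely new work is a reduction to the unital case and the careful bookkeeping of the two possibilities $\epsilon=\pm1$. For the sufficiency I would simply observe that each factor in $\ol{a}\mapsto\ol{U_bJ(a^\epsilon)}$ is a bijective Hilbert's metric isometry: the power map $\ol{a}\mapsto\ol{a^\epsilon}$ is the identity when $\epsilon=1$ and the inversion $\iota$ when $\epsilon=-1$, both of which preserve $d_H$; a Jordan isomorphism $J$ is an order isomorphism and hence preserves every $M(\cdot/\cdot)$; and $U_b$ with $b\in N_+^\circ$ is an order isomorphism for the same reason. As a composition of bijective $d_H$-isometries, $f$ is then one as well.

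For the necessity I would first reduce to the case $f(\ol{e})=\ol{e}$. Choosing a representative $c\in f(\ol{e})$ and setting $b:=c^{\frac12}\in N_+^\circ$, one has $U_b\ol{e}=\ol{b^2}=\ol{c}=f(\ol{e})$, so $b\in f(\ol{e})^{\frac12}$; this is the $b$ of the statement. Then $g:=U_{b^{-1}}\circ f$ is a bijective $d_H$-isometry with $g(\ol{e})=\ol{e}$ by \eqref{e:U_facts}. Corollary~\ref{uniform lambda} now provides, for either $g$ or $\iota\circ g$, an induced orthoisomorphism $\theta\colon\P(M)\to\P(N)$; writing $h$ for the chosen map (so $h\in\{g,\iota\circ g\}$ and $h(\ol{e})=\ol{e}$, since $\iota$ fixes $\ol{e}$), Corollary~\ref{cor:T_extends_J} extends $\theta$ to a Jordan isomorphism $J\colon M\to N$, and Lemma~\ref{lem:J_is_S} identifies its quotient map $\ol{J}$ with the linear isometry $S[a]=\log h(\exp([a]))$ of Theorem~\ref{t:inducing_isometry}.

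It then remains to recover $h$ itself and unwind the reductions. For $\ol{x}\in\ol{M}_+^\circ$ we have $\log\ol{x}=[\log x]$, so
\[
h(\ol{x})=\exp\big(S[\log x]\big)=\exp\big(\ol{J}[\log x]\big)=\exp\big([J\log x]\big)=\ol{J(x)},
\]
the last step using that $J$ intertwines $\Log$/$\Exp$ with the continuous functional calculus, whence $J\log x=\log J(x)$. If $h=g$ this gives $U_{b^{-1}}f(\ol{x})=\ol{J(x)}$, i.e.\ $f(\ol{x})=\ol{U_bJ(x)}$ (the case $\epsilon=1$); if $h=\iota\circ g$, applying $\iota$ to $\iota\big(U_{b^{-1}}f(\ol{x})\big)=\ol{J(x)}$ and using $J(x^{-1})=J(x)^{-1}$ yields $f(\ol{x})=\ol{U_bJ(x^{-1})}$ (the case $\epsilon=-1$); the identity $b\in f(\ol{e})^{\frac12}$ holds by construction. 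I expect the main obstacle to be purely organisational: correctly tracking the $\epsilon=\pm1$ dichotomy coming from the inversion in Corollary~\ref{uniform lambda} and keeping the $\Log$/$\Exp$ translation between $[M]$ and $\ol{M}_+^\circ$ consistent with the functional calculus, since all the substantive difficulty has already been absorbed into the orthoisomorphism and extension theorems.
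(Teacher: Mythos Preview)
Your proposal is correct and follows essentially the same route as the paper's proof: reduce to the unital case via $U_{f(\ol{e})^{-1/2}}$, invoke Corollary~\ref{uniform lambda} to get an orthoisomorphism for $g$ or $\iota\circ g$, extend it to a Jordan isomorphism $J$ via Corollary~\ref{cor:T_extends_J}, identify $\ol{J}=S$ by Lemma~\ref{lem:J_is_S}, recover $h(\ol{x})=\ol{J(x)}$ through the $\Log/\Exp$ translation, and then unwind the $\epsilon=\pm1$ cases. The only cosmetic difference is that the paper writes $a=\exp(c)$ and computes $h(\ol{a})=\ol{J(\exp c)}$ directly, whereas you phrase the same step as $J\log x=\log J(x)$; both are the same functional-calculus observation.
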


\begin{proof}
Let $ f \colon \ol{M}_+^\circ \to \ol{N}_+^\circ$ be a bijective Hilbert's metric isometry. Then we can define a new bijective isometry $g \colon \ol{M}_+^\circ \to \ol{N}_+^\circ$ by
\[
g(\ol{a}) = U_{f(\ol{e})^{-\frac{1}{2}}} f(\ol{a})\mbox{\quad for all }\ol{a}\in \ol{M}_+^\circ.
\]
Note that $g(\ol{e})=\ol{e}$ and hence it follows from Corollary~\ref{uniform lambda} that either $g$ or $\iota\circ g$ has the property that the induced map $\theta\colon \P(M)\to\P(N)$ is an orthoisomorphism. Let $h\in\{g,\iota\circ g\}$ be the map with this property and $J$ be the Jordan isomorphism from Corollary~\ref{cor:T_extends_J}. Note that $J$ induces a map from $\ol{M}^\circ_+$ to $\ol{N}_+^\circ$. Let $a \in M_+^\circ$, then $a = \exp(c)$ for some $c \in M$, and so by Lemma~\ref{lem:J_is_S},

\begin{eqnarray*}
h(\ol{a})  =  \exp(S\log(\ol{\exp (c)})) = \exp(\ol{J}[c]) = \exp([Jc]) = \ol{\exp(Jc)} = \ol{J(\exp(c))} = \ol{J a} = J\ol{a}.
\end{eqnarray*}
Thus, $h$ coincides with $J$ on $\ol{M}_+^\circ$. Since $h\in\{g,\iota\circ g\}$, for either $\epsilon =1$ or
$\epsilon =-1$ we have that
\[
(U_{f(\ol{e})^{-\frac{1}{2}}} f(\ol{a}))^{\epsilon} = J \ol{a}\mbox{\quad for al $\ol{a}\in \ol{M}_+^\circ$},
\]
hence
\[
f(\ol{a}) = U_{f(\ol{e})^{\frac{1}{2}}} (J\ol{a})^{\epsilon} =U_{f(\ol{e})^{\frac{1}{2}}} (\ol{Ja})^{\epsilon}= U_{f(\ol{e})^{\frac{1}{2}}} \ol{J(a^{\epsilon})}=\ol{U_bJ(a^\eps)}
\]
for some $b\in f(\ol{e})^{\frac{1}{2}}$. To complete the proof note that any map  of the form (\ref{hilbisom}) is a bijective Hilbert's metric isometry.
\end{proof}
Theorem \ref{t:hilbertisoms} has the following direct consequence.

\begin{corollary}
Let $M$ and $N$ be JBW-algebras. The metric spaces $(\ol{M}_+^\circ,d_H)$ and $(\ol{N}_+^\circ,d_H)$ are isometric if and only if $M$ and $N$ are Jordan isomorphic.
\end{corollary}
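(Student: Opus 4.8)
The plan is to reduce both directions to Theorem~\ref{t:hilbertisoms}, which does all the real work. The forward implication will be essentially a restatement of that theorem, while the reverse implication requires only the routine observation that a Jordan isomorphism descends to a bijective Hilbert's metric isometry on the set of rays.

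For the ``only if'' direction, I would suppose that $(\ol{M}_+^\circ, d_H)$ and $(\ol{N}_+^\circ, d_H)$ are isometric, so that there exists a bijective Hilbert's metric isometry $f \colon \ol{M}_+^\circ \to \ol{N}_+^\circ$. By Theorem~\ref{t:hilbertisoms}, $f$ necessarily has the form $f(\ol{a}) = \ol{U_b J(a^\epsilon)}$ for some $\epsilon \in \{-1,1\}$, some $b \in N_+^\circ$, and a Jordan isomorphism $J \colon M \to N$. The mere existence of this $J$ already witnesses that $M$ and $N$ are Jordan isomorphic, so nothing further is needed here.

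For the ``if'' direction, I would start from a Jordan isomorphism $J \colon M \to N$. By Corollary~\ref{orderisoms}, $J$ is an order isomorphism, hence maps $M_+^\circ$ bijectively onto $N_+^\circ$ and satisfies $a \le \beta b \iff Ja \le \beta Jb$ for all $a,b \in M_+^\circ$ and all $\beta > 0$; consequently $M(Ja/Jb) = M(a/b)$ and $J$ preserves $d_H$. Since $J$ is linear it respects the scaling equivalence defining the rays, so it induces a well-defined bijection $\ol{J} \colon \ol{M}_+^\circ \to \ol{N}_+^\circ$, $\ol{a} \mapsto \ol{Ja}$, which is therefore a bijective Hilbert's metric isometry. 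This exhibits the required isometry between the two metric spaces.

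I do not expect any genuine obstacle, as the substantive content is already contained in Theorem~\ref{t:hilbertisoms}. The only point deserving a moment of care is the verification that an order isomorphism preserves $M(\cdot/\cdot)$ and hence $d_H$; this is immediate from the definition $M(a/b) = \inf\{\beta > 0 \colon a \le \beta b\}$ together with the fact that an order isomorphism preserves the relation $a \le \beta b$.
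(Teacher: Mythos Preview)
Your proposal is correct and follows exactly the approach the paper implicitly has in mind; the paper does not even spell out a proof, merely stating that the corollary is a ``direct consequence'' of Theorem~\ref{t:hilbertisoms}. Your write-up simply fills in the routine details of both implications, and the only minor variation is that for the ``if'' direction you verify directly via Corollary~\ref{orderisoms} that a Jordan isomorphism induces a Hilbert isometry, whereas one could equally well just invoke the ``if'' half of Theorem~\ref{t:hilbertisoms} with $b=e$ and $\epsilon=1$.
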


Next, we will describe the isometry group $\mathrm{Isom}(\ol{M}_+^\circ)$ consisting of all bijective Hilbert's metric isometries on $\ol{M}_+^\circ$.  Consider the subgroup $\mathrm{Proj}(M_+)$ of projectivities consisting of maps $\tau\colon \ol{M}_+^\circ\to\ol{M}_+^\circ$ of the form $\tau(\ol{a}) =\ol{Ta}$, where $T\in\mathrm{Aut}(M_+)$.  Note that by Proposition \ref{p:order_isomorphism} elements $\tau$ in $\mathrm{Proj}(M_+)$  can be written as
$\tau(\ol{a}) = U_{\ol{b}} \ol{Ja}$ with $b\in M_+^\circ$ and $J$ a Jordan isomorphism.  So,
\[
(\iota\circ \tau\circ\iota )(\ol{a}) =( U_{\ol{b}} \ol{Ja^{-1}})^{-1}= U_{\ol{b}^{-1}} \ol{(Ja^{-1})^{-1}} = U_{\ol{b}^{-1}}\ol{Ja},
\]
which shows that $\iota\circ \tau\circ \iota\in \mathrm{Proj}(M_+)$, and hence $\mathrm{Proj}(M_+)$ is a normal subgroup of $\mathrm{Isom}(\ol{M}_+^\circ)$. Moreover, the group $C_2$ of order 2 generated by $\iota$ has trivial intersection with $\mathrm{Proj}(M_+)$ if $\ol{M}_+^\circ$ contains an orthogonal simplex. On the other hand, if $\ol{M}_+^\circ$ does not contain an orthogonal simplex, then $\iota$ belongs to $\mathrm{Proj}(M_+)$. Indeed, if $M$ contains no nontrivial projections, then $M=\R$ and $\iota$ is clearly projectively linear here. If $M$ contains a nontrivial projection, then it is minimal and maximal. So, if $p\in M$ is a nontrivial central projection, then $M=M_p\oplus M_{p^\perp}$. Since both $M_p$ and $M_{p^\perp}$ are JBW-algebras which contain no nontrivial projections, we conclude that $M_p\cong M_{p^\perp}\cong\R$ and $M\cong\R^2$. On $(\R^2_+)^\circ$ the inversion map satisfies $\iota(x,y)=(x^{-1},y^{-1})=(xy)^{-1}(y,x)$,
which belongs to $\mathrm{Proj}(M_+)$. Finally, suppose that all nontrivial projections in $M$ are not central. Then $M$ is a factor, and for any nontrivial projection $p$, it follows that $M_p\cong\R$ by the minimality of $p$. This means that all nontrivial projections in $M$ are abelian and their maximality implies that they have central cover $e$. Since we can write $e=p+p^\perp$, we find that $M$ is of type $I_2$. By \cite[Theorem 6.1.8]{HO} we have that $M$ is a spin factor, so $M_+$ is strictly convex. For an order unit space with strictly convex cone there always exists a strictly positive state, thus by \cite[Remark~3.5]{LRW1} all bijective Thompson's metric isometries on $M_+^\circ$ are projective linear order isomorphisms. This implies that $\iota\in\mathrm{Proj}(M_+)$. We have shown that if $M$ is a JBW-algebra such that $\ol{M_+^\circ}$ does not contain an orthogonal simplex, then $M_+$ must be a Lorentz cone (i.e., the cone of a spin factor or $\R^2_+$). To summarize we have the following result.

\begin{proposition}\label{isom group} Let $M$ be a JBW-algebra. If $\ol{M}_+^\circ$ contains an orthogonal simplex, then the group of bijective Hilbert's metric isometries $\mathrm{Isom}(\ol{M}_+^\circ,d_H)$ satisfies
\[
\mathrm{Isom}(\ol{M}_+^\circ,d_H)\cong\mathrm{Proj}(M_+)\rtimes C_2.
\]
If $\ol{M}_+^\circ$ does not contain an orthogonal simplex, then $\mathrm{Isom}(\ol{M}_+^\circ,d_H)\cong \mathrm{Proj}(M_+)$. Moreover, we have that $\mathrm{Isom}(\ol{M}_+^\circ,d_H)\cong \mathrm{Proj}(M_+)$ if and only if $M_+$ is a Lorentz cone.
\end{proposition}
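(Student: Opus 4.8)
The plan is to read off the global group structure of $\mathrm{Isom}(\ol{M}_+^\circ,d_H)$ directly from the classification in Theorem~\ref{t:hilbertisoms}, and then to decide, via the orthogonal-simplex dichotomy, exactly when the inversion $\iota$ is already a projectivity. First I would take an arbitrary $f \in \mathrm{Isom}(\ol{M}_+^\circ,d_H)$ and write it, using Theorem~\ref{t:hilbertisoms} with $N = M$, as $f(\ol{a}) = \ol{U_b J(a^\epsilon)}$ for some $\epsilon \in \{\pm 1\}$, $b \in M_+^\circ$ and Jordan automorphism $J$ of $M$. When $\epsilon = 1$, Proposition~\ref{p:order_isomorphism} gives $U_b J \in \Aut(M_+)$, so $f(\ol{a}) = U_{\ol{b}}\,\ol{Ja}$ lies in $\mathrm{Proj}(M_+)$. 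When $\epsilon = -1$, setting $\tau(\ol{a}) := U_{\ol{b}}\,\ol{Ja} \in \mathrm{Proj}(M_+)$ one checks $f = \tau \circ \iota$, since $J$ preserves inverses. Thus every isometry lies in $\mathrm{Proj}(M_+) \cup \mathrm{Proj}(M_+)\,\iota$.

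Combining this with the normality of $\mathrm{Proj}(M_+)$ in $\mathrm{Isom}(\ol{M}_+^\circ,d_H)$ established in the discussion above, together with $\iota^2 = \mathrm{id}$, the subgroup generated by $\mathrm{Proj}(M_+)$ and $\iota$ is the whole isometry group and is the internal semidirect product $\mathrm{Proj}(M_+) \rtimes \langle \iota \rangle$. The only point still to settle is whether $\langle\iota\rangle$ meets $\mathrm{Proj}(M_+)$ trivially, i.e.\ whether $\iota \in \mathrm{Proj}(M_+)$, and this is where the two cases of the proposition diverge.

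If $\ol{M}_+^\circ$ contains an orthogonal simplex $\Delta(p_1,p_2,p_3)$, I would argue that $\iota \notin \mathrm{Proj}(M_+)$. Any projectivity $\tau(\ol a) = \ol{Ta}$ is, by linearity of $T$, projectively linear on such a simplex, sending $\ol{\sum_i \lambda_i p_i} \mapsto \ol{\sum_i \lambda_i Tp_i}$; whereas on the interior of $\Delta(p_1,p_2,p_3)$ the inversion acts by coordinatewise inversion $\ol{\sum_i \lambda_i p_i} \mapsto \ol{\sum_i \lambda_i^{-1} p_i}$, which is not projectively linear on a simplex with at least two nontrivial coordinates. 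This is precisely the nonlinear alternative in the dichotomy preceding Corollary~\ref{cor:proj_linear}. Hence $\langle\iota\rangle \cap \mathrm{Proj}(M_+) = \{\mathrm{id}\}$ and $\mathrm{Isom}(\ol{M}_+^\circ,d_H) \cong \mathrm{Proj}(M_+) \rtimes C_2$. If instead $\ol{M}_+^\circ$ contains no orthogonal simplex, then the case analysis carried out just before the proposition shows that $M_+$ is a Lorentz cone (that of $\R$, $\R^2$, or a spin factor) and that $\iota \in \mathrm{Proj}(M_+)$ in each of these cases, whence $\mathrm{Isom}(\ol{M}_+^\circ,d_H) = \mathrm{Proj}(M_+)$.

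For the final equivalence I would simply amalgamate the two cases: $\mathrm{Isom}(\ol{M}_+^\circ,d_H) = \mathrm{Proj}(M_+)$ holds exactly when $\iota \in \mathrm{Proj}(M_+)$, which by the dichotomy is exactly when $\ol{M}_+^\circ$ has no orthogonal simplex, and the preceding discussion identifies this last condition with $M_+$ being a Lorentz cone. I expect the main obstacle to be the non-projectivity of $\iota$ on an orthogonal simplex, namely making precise that coordinatewise inversion cannot coincide with any projectively linear map on a simplex of dimension at least one; once this is clear, together with the already-established Lorentz-cone classification in the no-simplex case, the semidirect-product bookkeeping is routine.
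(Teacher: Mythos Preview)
Your proposal is correct and follows essentially the same route as the paper: the proof in the paper is precisely the discussion immediately preceding the proposition, which establishes normality of $\mathrm{Proj}(M_+)$ via the conjugation computation, asserts that $\langle\iota\rangle\cap\mathrm{Proj}(M_+)$ is trivial when an orthogonal simplex exists, and carries out the Lorentz-cone case analysis when it does not. Your write-up is in fact slightly more explicit than the paper on the one point the paper leaves implicit, namely why $\iota\notin\mathrm{Proj}(M_+)$ in the presence of an orthogonal simplex; your appeal to the projectively-linear/projectively-antilinear dichotomy preceding Corollary~\ref{cor:proj_linear} is exactly the right justification.
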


We believe that the results in this section could be extended to general JB-algebras. However, our arguments rely in a crucial way on the existence of nontrivial projections, which may not be present in a JB-algebra. It would also be interesting to know whether it is true that if the Hilbert's metric isometry group  of a cone $C$ in a complete order unit space is not equal to the group of projectivities of $C$, then the order unit space is a JB-algebra. To date no counter example to this statement is known.

\footnotesize

\end{document}